\newcommand{\BC}{{\mathbb {C}}}
\newcommand{\BN}{{\mathbb {N}}}
\newcommand{\BR}{{\mathbb {R}}}
\newcommand{\BZ}{{\mathbb {Z}}}
\newcommand{\CB}{{\mathcal {B}}}
\newcommand{\CC}{{\mathcal {C}}}
\newcommand{\CE}{{\mathcal {E}}}
\newcommand{\CF}{{\mathcal {F}}}
\newcommand{\CI}{{\mathcal {I}}}
\newcommand{\CK}{{\mathcal {K}}}
\newcommand{\CM}{{\mathcal {M}}}
\newcommand{\CS}{{\mathcal {S}}}
\newcommand{\CX}{{\mathcal {X}}}
\newcommand{\Cd}{{\mathcal {D}}}
\newcommand{\RB}{{\mathrm {B}}}
\newcommand{\RC}{{\mathrm {C}}}
\newcommand{\RH}{{\mathrm {H}}}
\newcommand{\RK}{{\mathrm {K}}}
\newcommand{\RL}{{\mathrm {L}}}
\newcommand{\RN}{{\mathrm {N}}}
\newcommand{\RO}{{\mathrm {O}}}
\newcommand{\RP}{{\mathrm {P}}}
\newcommand{\RR}{{\mathrm {R}}}
\newcommand{\RT}{{\mathrm {T}}}
\newcommand{\RU}{{\mathrm {U}}}
\newcommand{\RZ}{{\mathrm {Z}}}
\newcommand{\GL}{{\mathrm{GL}}}
\newcommand{\Hom}{{\mathrm{Hom}}}
\newcommand{\Ind}{{\mathrm{Ind}}}
\newcommand{\SL}{{\mathrm{SL}}}
\newcommand{\Sym}{{\mathrm{Sym}}}
\newcommand{\wt}{\widetilde}
\newcommand{\wh}{\widehat}
\newcommand{\bs}{\backslash}
\newcommand{\sgn}{\operatorname{sgn}}
\newcommand{\od}{\operatorname{d}}
\newcommand{\oL}{\operatorname{L}}
\newcommand{\oH}{\operatorname{H}}
\newcommand{\oZ}{\operatorname{Z}}
\renewcommand{\k}{\mathfrak k}
\newcommand{\Z}{\mathbb{Z}}
\newcommand{\C}{\mathbb{C}}
\newcommand{\R}{\mathbb R}
\newcommand{\K}{\mathbb{K}}
\newcommand{\abs}[1]{\lvert#1\rvert}
\newcommand{\be}{\begin {equation}}
\newcommand{\ee}{\end {equation}}
\newcommand{\bee}{\begin {equation*}}
\newcommand{\eee}{\end {equation*}}
\theoremstyle{Theorem}
\theoremstyle{Theorem}
\theoremstyle{Theorem}
\theoremstyle{Theorem}
\newtheorem{prp}{Proposition}[section]
\newtheorem{corp}[prp]{Corollary}
\newtheorem{lemp}[prp]{Lemma}
\newtheorem{thmp}[prp]{Theorem}
\theoremstyle{Plain}
\newtheorem{remarkp}[prp]{Remark}
\theoremstyle{Definition}
\newtheorem{dfnp}[prp]{Definition}
\begin{document}

	\title[Archimedean Period Relations]{Archimedean period relations for Rankin-Selberg convolutions}
	
	\author[Y. Jin]{Yubo Jin}
	\address{Institute for Advanced Study in Mathematics, Zhejiang University\\
		Hangzhou, 310058, China}\email{yubo.jin@zju.edu.cn}

	\author[D. Liu]{Dongwen Liu}
	\address{School of Mathematical Sciences,  Zhejiang University\\
		Hangzhou, 310058, China}\email{maliu@zju.edu.cn}

	\author[B. Sun]{Binyong Sun}
	\address{Institute for Advanced Study in Mathematics and  New Cornerstone Science Laboratory, Zhejiang University,  Hangzhou, 310058, China}\email{sunbinyong@zju.edu.cn}

	\date{\today}
	\subjclass[2020]{Primary 22E45; Secondary 11F67, 22E41, 22E47}
	\keywords{L-function, Rankin-Selberg convolution, cohomological representation, period relation}

	\maketitle
	
	\begin{abstract}
		We formulate and prove the archimedean period relations for Rankin-Selberg convolutions of $\GL(n)\times\GL(n)$ and $\GL(n)\times\GL(n-1)$, for all generic cohomological representations. As a consequence, we prove the non-vanishing of the archimedean modular symbols. This extends the earlier results in \cite{LLS24}  for essentially tempered representations of  $\GL(n)\times\GL(n-1)$.  
	\end{abstract}
	
	\tableofcontents
	\section{Introduction and main results}
	
	For the study of special values of L-functions by modular symbols, there are three pivotal local ingredients from the representation theory, namely the non-vanishing hypothesis at infinity, the archimedean period relation, and the non-archimedean period relation. Among these three ingredients, the archimedean period relation is the most difficult to prove. For more information, see \cite{LLS24} for example. 
	
	Let $\K$ be an archimedean local field, which is topologically isomorphic to $\R$ or $\C$. Let $n$ and $n'$ be a pair of positive integers such that $n'=n$ or $n-1$. 
	In the case of $n'=n-1$, the archimedean period relation is formulated and proved  in \cite{LLS24} for essentially tempered cohomological representations of $\GL_n(\K)\times \GL_{n'}(\K)$.  In this article, we will extend the result to all generic cohomological  representations of  $\GL_n(\K)\times\GL_{n'}(\K)$. As a consequence of the archimedean period relations, we prove the non-vanishing of the archimedean modular symbols. When $n'=n-1$, the non-vanishing is conjectured by Kazhdan-Mazur in 1970's, which is proved in \cite{Sun} when the representation of $\GL_n(\K)\times\GL_{n'}(\K)$ is essentially tempered; in \cite{Chen} when $\K\cong\R$ and the representation of $\GL_n(\K)$ is essentially tempered.
	
	The authors are  suggested by Michael Harris to also provide a proof of the non-vanishing in the case when $n'=n-1$ and $\K\cong \C$, for the representations that are not necessarily essentially tempered. Harris' suggestion  is one of the  main motivations of the present article. In a forthcoming  paper,  the authors will use the results of this paper to study rationality of special values of Rankin-Selberg L-functions for $\GL(n)\times \GL(n)$, extending the main results of \cite{LLS24}.

	We are going to introduce various notions in order to state our main results in the subsequent subsections. Throughout the article, for $k\in\BN:=\{0,1,2,\ldots\}$, $\GL_k$ is the general linear group over $\Z$. Let $\RB_k=\RT_k\RN_k$ be the Borel subgroup of $\GL_k$ consisting of upper triangular matrices with $\RT_k$ the diagonal torus and $\RN_k$ the unipotent radical. Likewise, let $\overline{\RB}_k=\RT_k\overline{\RN}_k$ be the opposite Borel subgroup of lower triangular matrices with unipotent radical $\overline{\RN}_k$. The center of $\GL_k$ is denoted by $\RZ_k$. Denote by $1_k$ the identity matrix of size $k$. For each Lie group $G$ we denote by $G^0$ its identity connected component and $\pi_0(G)$ its component group. A superscript `$^\vee$' over a finite-dimensional representation or Casselman-Wallach representation will indicates its contragredient representation.

	\subsection{Generic cohomological representation}
	\label{sec1.1}
	Let $\mathcal{E}_{\K}$ denote the set of all continuous field embeddings $\iota:\K\to\C$. Let $k$ be a positive integer. As usual, we say that a weight
	\[
	\mu:=(\mu^\iota)_{\iota\in \mathcal{E}_{\K}}:=(\mu_1^{\iota},\mu_2^{\iota}, \dots, \mu_k^{\iota})_{\iota\in \mathcal{E}_{\K}}
	\in (\Z^k)^{\mathcal{E}_{\K}}
	\]
	is dominant if 
	\[
	\mu_1^{\iota}\geq\mu_2^{\iota}\geq\dots\geq\mu_k^{\iota}\quad\textrm{for all } \iota\in \mathcal{E}_{\K}. 
	\]
	Let $\mu$ be such a dominant weight and denote by $F_\mu$ the (unique up to isomorphism) irreducible holomorphic finite-dimensional representation of $\GL_k(\K\otimes_\R\C)$ of highest weight $\mu$. We say a Casselman-Wallach representation $\pi_{\mu}$ of $\GL_k(\K)$ is $F_{\mu}$-cohomological if the total continuous cohomology
    \[
\RH_{\mathrm{ct}}^{\ast}\left(\R_+^{\times}\backslash\GL_k(\K)^0;F_{\mu}^{\vee}\otimes\pi_{\mu}\right)\neq\{0\}.
    \]
    Denote by $\Omega(\mu)$ the set of isomorphism classes of generic irreducible $F_{\mu}$-cohomological Casselman-Wallach representations of $\GL_k(\K)$. See Section \ref{sec:TP} for explicit descriptions of these representations.


  As a consequence of the Delorme's lemma (\cite[Theorem III.3.3]{BW}), we have that
    \[
	\#(\Omega(\mu))=\begin{cases}
		2,  & \text{if }\K\cong\R\text{ and }k\text{ is odd};\\
		1,  & \text{otherwise},
	\end{cases}
	\]
	and for every $\pi_{\mu}\in\Omega(\mu)$, $i\in\Z$,
	\[
	\RH^i_{\mathrm{ct}}(\R_+^{\times}\backslash\GL_k(\K)^0;F_{\mu}^{\vee}\otimes\pi_{\mu})\neq \{0\}\ \Longleftrightarrow \ b_{k,\K}\leq i\leq t_{k,\K},    
	\]
where 
\[
	(b_{k,\K},t_{k,\K}):=\begin{cases}
		\left(\lfloor\frac{k^2}{4}\rfloor, \lfloor\frac{(k+1)^2}{4}\rfloor-1\right), & \text{if $\K\cong\R$};\\
		\left(\frac{k(k-1)}{2},\frac{k(k+1)}{2}-1\right), & \text{if $\K\cong\C$}.
	\end{cases}
	\]
    Moreover,
	\[
	\begin{aligned}
		& \RH^{b_{k,\K}}_{\mathrm{ct}}(\R_+^{\times}\backslash\GL_k(\K)^0;F_{\mu}^{\vee}\otimes\pi_{\mu})\\
		\cong \, &\RH^{t_{k,\K}}_{\mathrm{ct}}(\R_+^{\times}\backslash\GL_k(\K)^0;F_{\mu}^{\vee}\otimes\pi_{\mu})\\
		\cong \, &  \begin{cases}
			\mathbf{1}_{\K^{\times}}\oplus\mathrm{sgn}_{\K^{\times}},  & \textrm{if}\ \K\cong\R \text{ and $k$ is even};\\
			\varepsilon_{\pi_{\mu}},  & \text{otherwise},
		\end{cases}
	\end{aligned}
	\]
	as representations of $\pi_0(\K^{\times})$, where $\mathbf{1}_{\K^{\times}}$ is the trivial character, $\mathrm{sgn}_{\K^{\times}}$ is the sign character, and $\varepsilon_{\pi_{\mu}}$ is the central character of $F_{\mu}^{\vee}\otimes\pi_{\mu}$.

	We realize $F_{\mu}$, $F_{\mu}^{\vee}$ as algebraic inductions
	\begin{equation}\label{Fmu}
		F_{\mu}={^{\mathrm{alg}}\mathrm{Ind}}^{\GL_k(\K\otimes_{\R}\C)}_{\overline{\RB}_k(\K\otimes_{\R}\C)}\chi_{\mu},\qquad F^{\vee}_{\mu}={^{\mathrm{alg}}\mathrm{Ind}}^{\GL_k(\K\otimes_{\R}\C)}_{\RB_k(\K\otimes_{\R}\C)}\chi_{-\mu}.
	\end{equation}
	Here $\chi_{\mu}$ (resp. $\chi_{-\mu}$) is the algebraic character of $\RT_k(\K\otimes_{\R}\C)$ corresponding to the weight $\mu$ (resp. $-\mu$). We fix a highest weight vector $v_{\mu}\in (F_{\mu})^{\RN_k(\K\otimes_{\R}\C)}$ and a lowest weight vector $v_{\mu}^{\vee}\in (F^{\vee}_{\mu})^{\overline{\RN}_k(\K\otimes_{\R}\C)}$ such that $v_{\mu}(1_k)=v_{\mu}^{\vee}(1_k)=1$. 
    The invariant pairing 
    \[
\langle\cdot,\cdot\rangle:F_{\mu}\times F_{\mu}^{\vee}\to\C
    \]
is normalized such that $\langle v_{\mu},v_{\mu}^{\vee}\rangle=1$. Then viewed as a linear functional on $F_{\mu}$ (resp. $F_{\mu}^{\vee}$), $v_{\mu}^{\vee}$ (resp. $v_{\mu}$) equals the evaluation map at $1_k$.

	Fix a  unitary character $\psi_\R:\R\rightarrow \C^\times$ whose kernel equals $\Z$. Clearly $\psi_{\R}$ equals either $x\mapsto e^{2\pi\mathrm{i}x}$ or $x\mapsto e^{-2\pi\mathrm{i}x}$  ($\mathrm{i}:=\
    \sqrt{-1}$). Put
	\[
	\varepsilon_{\psi_{\R}}=\begin{cases}
		1, & \text{if  }\ \psi_{\R}(x)=e^{2\pi\mathrm{i}x};\\
		-1, &\text{if  }\ \psi_{\R}(x)=e^{-2\pi\mathrm{i}x}.
	\end{cases}
	\]
	Define a character 
	\[
	\psi_{\K}:\K\to\C^{\times},\qquad x\mapsto\psi_{\R}\left(\sum_{\iota\in\mathcal{E}_{\K}}\iota(x)\right),
	\]
	which induces a unitary character
	\[
	\psi_{k,\K}:\RN_k(\K)\to\C^{\times},\qquad [x_{i,j}]_{1\leq i,j\leq k}\mapsto\psi_{\K}\left(\sum_{i=1}^{k-1}x_{i,i+1}\right).
	\]
	
	Take $\pi_{\mu}\in \Omega(\mu)$. Write $0_k$ for the zero element of the abelian group $(\Z^k)^{\mathcal{E}_{\K}}$.
	Let $\pi_{0_k}$ be the unique representation in $\Omega(0_k)$ with the same central character as that of $F_\mu^\vee\otimes \pi_\mu$.
	We fix  generators 
	\[
	\lambda_{\mu}\in \mathrm{Hom}_{\RN_k(\K)}(\pi_{\mu},\psi_{k,\K}), \quad \lambda_{0_{k}} \in\mathrm{Hom}_{\RN_{k}(\K)}(\pi_{0_{k}},{\psi_{k,\K}}),
	\]
	for the one dimensional spaces of Whittaker functionals.
	
	The following proposition is proved in \cite[Proposition 2.1]{LLS24} when $\pi_\mu$ is essentially tempered. The proof in the general case is similar, which will be reviewed in Section \ref{sec:TP}.
	
	\begin{prp}
		\label{prop:translationpi} 
		There is a unique map 
		\be \label{jmu00}
		\jmath_{\mu}\in \Hom_{\GL_k(\K)}(\pi_{0_{k}},  F_\mu^\vee\otimes \pi_{\mu})
		\ee 
		such that the diagram
		\be \label{diagjmu}
		\begin{CD}
			\pi_{0_{k}}
			@>\jmath_{\mu} >> F_\mu^\vee\otimes \pi_{\mu} \\
			@V \lambda_{0_{k}}VV            @VV v_\mu\otimes \lambda_{\mu} V\\
			\C@=\C \\
		\end{CD}
		\ee 
		commutes. Moreover, for each $i\in\Z$, $\jmath_{\mu}$ induces a linear isomorphism
		\begin{equation} \label{jmucohomology}
			\jmath_\mu :\RH^i_{\mathrm{ct}}(\R_+^{\times}\backslash\GL_k(\K)^0;\pi_{0_k}) \xrightarrow{\cong} \RH^i_{\mathrm{ct}}(\R_+^{\times}\backslash\GL_k(\K)^0;F_{\mu}^{\vee}\otimes\pi_{\mu})
		\end{equation}
		of $\pi_0(\K^{\times})$-representations.
	\end{prp}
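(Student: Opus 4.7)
The plan is to adapt the strategy of \cite[Proposition 2.1]{LLS24}, which treats the essentially tempered case, using the explicit realizations of generic cohomological representations as parabolic inductions that will be reviewed in Section \ref{sec:TP}. The heart of the argument is a Zuckerman-type translation principle: $\pi_\mu$ has the same infinitesimal character as $F_\mu$, so the generalized infinitesimal character decomposition of the tensor product $F_\mu^\vee\otimes\pi_\mu$ in the category of Casselman--Wallach representations contains a direct summand whose infinitesimal character agrees with that of the trivial representation. I claim this summand is irreducible and isomorphic to $\pi_{0_k}$. Granting this, the space $\mathrm{Hom}_{\GL_k(\K)}(\pi_{0_k}, F_\mu^\vee\otimes\pi_\mu)$ is one-dimensional; combining any generator with the one-dimensionality of the Whittaker model of $\pi_{0_k}$, the commutative diagram \eqref{diagjmu} selects a unique nonzero $\jmath_\mu$.

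For the cohomology isomorphism \eqref{jmucohomology}, I would decompose $F_\mu^\vee\otimes\pi_\mu=\jmath_\mu(\pi_{0_k})\oplus\pi'$, where $\pi'$ collects the other generalized infinitesimal character components. A standard vanishing theorem for relative Lie algebra cohomology (\emph{cf.} \cite[Corollary I.4.2]{BW}) implies that $\RH^i_{\mathrm{ct}}(\R_+^\times\backslash\GL_k(\K)^0;\pi')=0$ for every $i$, since none of the generalized infinitesimal characters appearing in $\pi'$ match that of the trivial representation. Consequently, $\jmath_\mu$ induces an isomorphism on continuous cohomology in every degree, and the $\pi_0(\K^\times)$-equivariance is automatic from the $\GL_k(\K)$-equivariance of $\jmath_\mu$.

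The main obstacle will be verifying the claim that the trivial-infinitesimal-character component of $F_\mu^\vee\otimes\pi_\mu$ is irreducible and isomorphic to $\pi_{0_k}$ outside the essentially tempered case. My plan is to reduce to the tempered situation handled in \cite{LLS24} via parabolic induction: using Section \ref{sec:TP}, write $\pi_\mu=\mathrm{Ind}_\RP^{\GL_k(\K)}\sigma_\mu$ and $\pi_{0_k}=\mathrm{Ind}_\RP^{\GL_k(\K)}\sigma_{0_k}$ for a common parabolic $\RP=\RL\RU$, with inducing data differing by a twist by the algebraic character of $\RL$ associated with $\mu$. By exactness of parabolic induction and the compatibility of the generalized infinitesimal character decomposition with induction, the trivial-infinitesimal-character component of $F_\mu^\vee\otimes\pi_\mu$ is identified with the parabolic induction of the analogous component of $F_\mu^\vee|_\RL\otimes\sigma_\mu$, reducing the claim to the corresponding essentially tempered statement on the Levi $\RL$. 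The Whittaker compatibility then transfers through parabolic induction via the standard Jacquet integral, pinning down $\jmath_\mu$ uniquely.
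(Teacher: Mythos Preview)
The paper takes a different, more explicit route than your abstract translation-principle argument. Rather than working with the intermediate parabolic $\overline{\RR}_k$ and reducing to its Levi, it uses the full principal series $I_\mu := {}^{\mathrm u}\mathrm{Ind}_{\overline{\RB}_k(\K)}^{\GL_k(\K)}\big(\chi_\mu \cdot (\varepsilon_{\pi_\mu}\circ\det)\big)$, of which $\pi_\mu$ is the unique irreducible quotient. An explicit $\GL_k(\K)$-map $\imath_\mu\colon I_{0_k} \to F_\mu^\vee \otimes I_\mu$ is defined by $\imath_\mu(f)(g) := f(g)\cdot(g^{-1}.v_\mu^\vee)$, and this visibly satisfies $(v_\mu\otimes\wt\lambda_\mu)\circ\imath_\mu = \wt\lambda_{0_k}$. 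One then descends to the irreducible quotients via $p_\mu\colon I_\mu\twoheadrightarrow\pi_\mu$ and $p_{0_k}\colon I_{0_k}\twoheadrightarrow\pi_{0_k}$, obtaining $\jmath_\mu$ with the required Whittaker compatibility built in. Uniqueness and the cohomology isomorphism \eqref{jmucohomology} are then read off from the translation principle exactly as you propose. The payoff of the paper's approach is that the Whittaker diagram \eqref{diagjmu} is immediate from the formula for $\imath_\mu$, and no analysis of $F_\mu^\vee|_\RL$ or of Jacquet integrals is required.

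Your proposal also contains a concrete error. Over $\K\cong\R$, the inducing data $\sigma_\mu$ and $\sigma_{0_k}$ on the Levi of $\overline{\RR}_k$ do \emph{not} differ by an algebraic character twist: on each $\GL_2$ factor they are relative discrete series $D_{\tilde\mu_i^\iota,\,\tilde\mu_{k+1-i}^\iota}$ versus $D_{(k+1-2i)/2,\,(2i-k-1)/2}$, which have different minimal $K$-types whenever $\mu_i^\iota\neq \mu_{k+1-i}^\iota$. What is actually needed on each $\GL_2$ factor is precisely the essentially tempered instance of the proposition (already supplied by \cite{LLS24}), not a mere twist. Your broader plan of reducing to the Levi is salvageable with this correction, but you must then also isolate, inside the highly reducible restriction $F_\mu^\vee|_\RL$, the unique irreducible summand that lands in the trivial-infinitesimal-character piece after tensoring with $\sigma_\mu$, and track the Whittaker functional through that specific summand via the Jacquet integral. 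This is doable but not automatic, and the paper's use of the full Borel sidesteps it entirely.
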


	\subsection{Degenerate principal series representations}\label{sec:1.2}
	
	Denote by $\RP_n(\K)$  the parabolic subgroup of $\GL_n(\K)$ consisting of all the matrices whose last row equals $[0\ \cdots \ 0 \ a]$ for some $a\in\K^{\times}$. For two characters $\eta,\chi:\K^{\times}\to\C^{\times}$, we consider the degenerate principal series representations
	\begin{equation}\label{Ietachi}
		\begin{aligned}
			I_{\eta,\chi}:=&\left(\mathrm{Ind}^{\GL_n(\K)}_{\RP_n(\K)}\left(\mathbf{1}\otimes|\cdot|_{\K}^{\frac{n}{2}}\eta^{-1}\chi^{-n}\right)\right)\otimes\left((|\cdot|_{\K}^{-\frac{1}{2}}\cdot\chi)\circ\det\right)\\
			=&\left({^{\mathrm{u}}\mathrm{Ind}}^{\GL_n(\K)}_{\RP_n(\K)}\left(\mathbf{1}\otimes\eta^{-1}\chi^{-n}\right)\right)\otimes(\chi\circ\det),
		\end{aligned}
	\end{equation}
    where $\mathbf{1}$ is the trivial representation of $\GL_{n-1}(\K)$. When $n=1$, we understand $I_{\eta,\chi}=\eta^{-1}$ as a character. Here and henceforth, $\mathrm{Ind}$ and ${^{\mathrm{u}}\mathrm{Ind}}$ stand for the normalized and unnormalized smooth parabolic inductions respectively. As usual, $|\cdot|_{\K}$ is the normalized absolute value on $\K$.
Let
	\begin{equation}
		\mathrm{d}\eta:=\{\eta_{\iota}\}_{\iota\in\mathcal{E}_{\K}}\in\C^{\mathcal{E}_{\K}},\qquad	\mathrm{d}\chi:=\{\chi_{\iota}\}_{\iota\in\mathcal{E}_{\K}}\in\C^{\mathcal{E}_{\K}}
	\end{equation}
	denote the complexified differentiations of $\eta$ and $\chi$ respectively. We assume that both $\eta$ and $\chi$ are algebraic in the sense that $\mathrm{d}\eta,\mathrm{d}\chi\in\Z^{\mathcal{E}_{\K}}$. When $\K\cong\R$ and $n$ is even, we impose the assumption that 
    \be \label{nonzerohc}
     \eta(-1)=(-1)^{\sum_{\iota'\in\mathcal{E}_{\K}}\eta_{\iota'}}.
    \ee
    
It is easy to see that the infinitesimal character of $I_{\eta,\chi}$ is regular if and only if
	\begin{equation}\label{regular}
		(\eta_{\iota}+n\chi_{\iota})(\eta_{\iota}+n \chi_{\iota}-n)\geq 0 
	\end{equation}
    for all $\iota\in\mathcal{E}_{\K}$.
	Assume this is the case and write $F_{\eta,\chi}$ for the irreducible holomorphic finite-dimensional representation of $\GL_n(\K\otimes_{\R}\C)$ whose infinitesimal character equals that of $I_{\eta,\chi}$.
	
Let $\iota\in\mathcal{E}_{\K}$ be an element such that
	\begin{equation}\label{iota}
		\eta_{\iota}+n\chi_{\iota}\leq \eta_{\overline{\iota}}+n\chi_{\overline{\iota}},
	\end{equation}
	where $\overline{\iota}$ denotes the composition of $\iota$ with the complex conjugation. 
    Note that $\iota = \bar\iota$ when $\K\cong\R$. When $n'=n$, we distinguish three cases according to the pair $(\mathrm{d}\eta,\mathrm{d}\chi)$:   
	\begin{eqnarray}
	\nonumber   & \textrm{Case ($-$)} \quad & \eta_{\bar\iota}+n\chi_{\bar\iota}\leq 0;\\
\label{cases}	&\textrm{Case ($+$)}\quad  & \eta_{\iota}+n\chi_{\iota} \geq n;\\
	\nonumber &	\textrm{Case ($\pm$)} \quad &\eta_{\iota}+n\chi_{\iota}\leq 0 \text{ and }  \eta_{\overline{\iota}}+n\chi_{\overline{\iota}}\geq n.
	\end{eqnarray}
    In particular $\K\cong\C$ in Case ($\pm$).
	Set
	\[
	c_{n,\K}:=\begin{cases}
		0, & \text{Case ($-$)};\\
		[\K:\R](n-1), & \text{Case ($+$)};\\
		n-1, & \text{Case ($\pm$)}.
	\end{cases}
	\]
	As a consequence of Delorme's lemma (\cite[Theorem III.3.3]{BW}), we have that
	$\RH_{\mathrm{ct}}^i(\R_+^{\times}\backslash\GL_n(\K)^0;F_{\eta,\chi}^{\vee}\otimes I_{\eta,\chi})=\{0\}$ if $i<c_{n,\K}$ and
    \[
	\mathrm{dim}\,\RH_{\mathrm{ct}}^{c_{n,\K}}(\R_+^{\times}\backslash\GL_n(\K)^0;F_{\eta,\chi}^{\vee}\otimes I_{\eta,\chi})=1.
	\] 
We remark that the above cohomology group vanishes without the assumption \eqref{nonzerohc}.


We introduce some notations for later use. For a ring $R$ and $k, l\in \BN$, write $R^{k\times l}$ for the set of $k\times l$ matrices with entries in $R$. Write $\C_{\iota'}:=\C$ for each $\iota'\in\mathcal{E}_{\K}$,  viewed as a $\K$-algebra via $\iota'$. When no confusion arises, we view an element $g\in \K^{k\times l}$ as its diagonal image in $(\K\otimes_\BR\BC)^{k\times l}=\prod_{\iota'\in\CE_\K}\BC_{\iota'}^{k\times l}$, and  write $g^{\iota'}:=\iota'(g)$ for its image in $\BC_{\iota'}^{k\times l}$. Similar notation will be used without further explanation.

Realize $F_{\eta,\chi}$ and $F_{\eta,\chi}^\vee$ by algebraic inductions as in \eqref{Fmu}, and denote by $v_{\eta,\chi}\in(F_{\eta,\chi})^{\RN_n(\K\otimes_{\R}\C)}$  the evaluating map at $1_n$ (when viewed as a linear functional on $F_{\eta,\chi}^\vee$).  Define a continuous linear map
\[
\begin{aligned}
\ell_{\eta,\chi}:F_{\eta,\chi}^{\vee}\otimes I_{\eta,\chi}& \to\C, \\
v\otimes \varphi & \mapsto \begin{cases} \langle v,v_{\eta,\chi}\rangle\cdot \varphi(w_n),
        & \text{Case ($-$)}; \\
		\langle v,v_{\eta,\chi}\rangle\cdot \varphi(1_n), & \text{Case ($+$)};\\
		 \langle v,(w_n^\iota, 1_n^{\bar\iota}).v_{\eta,\chi}\rangle\cdot \varphi(1_n), &\text{Case ($\pm$)},
         \end{cases}
    \end{aligned}
    \]
where $w_n$ is the anti-diagonal permutation matrix in $\GL_n(\Z)$.
    Put
    \begin{equation}
    \begin{aligned}
    \eta_0: &=\eta\cdot\prod_{\iota'\in\mathcal{E}_{\K}}(\iota'|_{\K^{\times}})^{-\eta_{\iota}}, \\
  \chi_0:& =
  \begin{cases}
  \chi\cdot\prod_{\iota'\in\mathcal{E}_{\K}}(\iota'|_{\K^{\times}})^{-\chi_{\iota}}, &\textrm{Case ($-$)};\\
  \chi\cdot|\cdot|_{\K}\cdot\prod_{\iota'\in\mathcal{E}_{\K}}(\iota'|_{\K^{\times}})^{-\chi_{\iota}}, &\textrm{Case ($+$)};\\
  \chi\cdot\overline{\iota}|_{\K^{\times}}\cdot\prod_{\iota'\in\mathcal{E}_{\K}}(\iota'|_{\K^{\times}})^{-\chi_{\iota}}, &\textrm{Case ($\pm$)}.
   \end{cases}
   \end{aligned}
 \end{equation}   
    Then the representation $I_{\eta_0, \chi_0}$  also has regular infinitesimal character, and  equalities hold  in  \eqref{cases} when $(\eta,\chi)$ is replaced by $(\eta_0,\chi_0)$. We apply the above discussions  and notations for $(\eta,\chi)$ to $(\eta_0,\chi_0)$. 
	As an analogue of Proposition \ref{prop:translationpi}, we will prove the following proposition in Section \ref{sec:Translation}.
	
	\begin{prp}\label{prop:translation}
		There is a unique map
		\begin{equation} \label{jetachi}
			\jmath_{\eta,\chi}\in\mathrm{Hom}_{\GL_n(\K)}\left(F_{\eta_0,\chi_0}^{\vee}\otimes I_{\eta_0,\chi_0},F_{\eta,\chi}^{\vee}\otimes I_{\eta,\chi}\right)
		\end{equation}
		such that the diagram
		\begin{equation}\label{jetachidiagram}
			\begin{CD}
				F_{\eta_0,\chi_0}^{\vee}\otimes I_{\eta_0,\chi_0} @>\jmath_{\eta,\chi}>> F_{\eta,\chi}^{\vee}\otimes I_{\eta,\chi}\\
				@V\ell_{\eta_0,\chi_0}VV @VV\ell_{\eta,\chi}V\\
				\C @= \C
			\end{CD}
		\end{equation}
		commutes. Moreover, $\jmath_{\eta,\chi}$ induces a linear isomorphism 
		\begin{equation}\label{jetachicohomology}
			\jmath_{\eta,\chi}:\RH_{\mathrm{ct}}^{c_{n,\K}}(\R_+^{\times}\backslash\GL_n(\K)^0;F_{\eta_0,\chi_0}^{\vee}\otimes I_{\eta_0,\chi_0})\xrightarrow{\cong}\RH_{\mathrm{ct}}^{c_{n,\K}}(\R_+^{\times}\backslash\GL_n(\K)^0;F_{\eta,\chi}^{\vee}\otimes I_{\eta,\chi}).
		\end{equation}
	\end{prp}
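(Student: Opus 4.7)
My plan is to mirror the strategy used in Proposition~\ref{prop:translationpi}, with the Whittaker functionals there replaced by the parabolic evaluation functionals $\ell_{\eta,\chi}$ here. The starting point is the tensor--induction identity
\[
V\otimes {^{\mathrm u}\mathrm{Ind}}_{\RP_n(\K)}^{\GL_n(\K)}(\sigma)\cong {^{\mathrm u}\mathrm{Ind}}_{\RP_n(\K)}^{\GL_n(\K)}\bigl(V|_{\RP_n(\K)}\otimes\sigma\bigr),
\]
which rewrites both $F_{\eta,\chi}^{\vee}\otimes I_{\eta,\chi}$ and $F_{\eta_0,\chi_0}^{\vee}\otimes I_{\eta_0,\chi_0}$ as single smooth parabolic inductions. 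The restrictions $F_{\eta,\chi}^{\vee}|_{\RP_n(\K)}$ and $F_{\eta_0,\chi_0}|_{\RP_n(\K)}$ admit canonical finite $\RP_n(\K)$-equivariant filtrations with one-dimensional graded pieces, and the three cases (-), (+), (\pm) of \eqref{cases} single out a distinguished ``extremal'' graded piece whose character, combined with the inducing data, matches that of $I_{\eta_0,\chi_0}$.

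With this in hand, I would apply Frobenius reciprocity to reduce the computation of
\[
\Hom_{\GL_n(\K)}\bigl(F_{\eta_0,\chi_0}^{\vee}\otimes I_{\eta_0,\chi_0},\, F_{\eta,\chi}^{\vee}\otimes I_{\eta,\chi}\bigr)
\]
to a Hom of $\RP_n(\K)$-modules, and then use a weight bookkeeping argument — crucially exploiting that $(\eta_0,\chi_0)$ realizes equality in \eqref{cases} — to show that only the extremal subquotient of $F_{\eta,\chi}^{\vee}|_{\RP_n(\K)}$ contributes non-trivially. This gives a one-dimensional Hom space and hence uniqueness of $\jmath_{\eta,\chi}$ up to a scalar. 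The functionals $\ell_{\eta,\chi}$, defined respectively as evaluation at $w_n$, $1_n$, or the mixed element $(w_n^{\iota},1_n^{\bar\iota})$ in the three cases, are tailored to detect exactly this extremal subquotient; so $\ell_{\eta,\chi}\circ\jmath_{\eta,\chi}$ is a non-zero scalar multiple of $\ell_{\eta_0,\chi_0}$, and rescaling fixes the scalar to $1$, yielding existence simultaneously with the commutativity of \eqref{jetachidiagram}.

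For the cohomology isomorphism \eqref{jetachicohomology}, both sides are one-dimensional by Delorme's lemma, so it suffices to show that $\jmath_{\eta,\chi}$ does not kill the degree-$c_{n,\K}$ cohomology. I would verify this by exhibiting an explicit generator of $\RH_{\mathrm{ct}}^{c_{n,\K}}(\R_+^{\times}\backslash\GL_n(\K)^0;F_{\eta_0,\chi_0}^{\vee}\otimes I_{\eta_0,\chi_0})$ as a Chevalley--Eilenberg cocycle concentrated on the extremal weight component of the $\RP_n(\K)$-filtration, and then tracing through the tensor--induction identification to see that its image under $\jmath_{\eta,\chi}$ is the analogous cocycle generator for $(\eta,\chi)$, up to the same non-zero constant appearing in the weight matching.

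The main obstacle is Case (\pm), where the extremal weight in $F_{\eta,\chi}^{\vee}|_{\RP_n(\K)}$ is not attached to a single Bruhat cell but mixes the two conjugate embeddings of $\K\cong\C$ via $(w_n^{\iota},1_n^{\bar\iota})$. Verifying that the associated subquotient is one-dimensional with the correct character, and that $\ell_{\eta,\chi}$ pairs with it non-trivially in precisely the way that makes the normalization constant of $\jmath_{\eta,\chi}$ transparent, requires a careful twisted weight computation that has no direct analog in the ``pure'' Cases (-) and (+). The real case with $n$ even is precisely where the assumption \eqref{nonzerohc} enters: without it, the extremal character would descend trivially to $\pi_0(\K^{\times})$ and the degree-$c_{n,\K}$ cohomology would vanish, breaking the final step.
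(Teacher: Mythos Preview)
Your approach is genuinely different from the paper's, and while plausible in outline, it would not deliver what the paper actually needs.

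The paper does not compute the Hom space abstractly via Frobenius reciprocity and a $\RP_n(\K)$-filtration. Instead it realizes $F_{\eta,\chi}$ concretely as a space of polynomial-coefficient differential operators on $\K^{1\times n}$ (Section~\ref{sec:Translation}), and builds $\check\jmath_{\eta,\chi}$ by hand: in Case~($-$) as multiplication of sections by the relevant homogeneous polynomials, and in Case~($\pm$) via the Weil representation $\omega_n=\CS(\K^{1\times n})$, letting $\Cd_{\eta,\chi}\subset\Cd(\K^{1\times n})$ act on Schwartz functions and then passing to the theta quotient $\Theta_n(\eta\chi^n)\cong I_{\eta,\chi}$ (Lemma~\ref{tran-S} and diagram~\eqref{RIdiagram}). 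The diagram~\eqref{jetachidiagram} is then a direct check. This explicit construction is essential later: the factorization of $\beta_{\eta,\chi}$ through Fourier transforms in~\eqref{alpha-fac} is exactly what drives the four commutative diagrams (Lemmas~\ref{lem:CD1}--\ref{lem:CD4}) used to compute the period constant $\Omega_{\xi,\chi}$. An abstract existence/uniqueness argument via filtrations would not give you this handle.

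For the cohomology isomorphism, your plan to ``exhibit an explicit generator as a Chevalley--Eilenberg cocycle concentrated on the extremal weight component'' is vague and does not match the paper's argument. In Case~($-$) the paper observes $c_{n,\K}=0$, so cohomology is just $\RK_{n,\K}^0$-invariants in $F_{\eta,\chi}^\vee\otimes\tau_{\eta,\chi}$ and the isomorphism is immediate. In Case~($\pm$) the paper uses a short but sharp argument: $I_{\eta_0,\chi_0}|_{\SL_n(\K)}$ is unitary, so all differentials on the $(\eta_0,\chi_0)$ complex vanish; the image of $\jmath_{\eta,\chi}$ is exactly the summand of $F_{\eta,\chi}^\vee\otimes I_{\eta,\chi}$ with trivial generalized infinitesimal character; hence the image lies in $\Ker(d^i_{\eta,\chi})$ and meets $\mathrm{Im}(d^{i-1}_{\eta,\chi})$ trivially. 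This infinitesimal-character bookkeeping replaces your proposed cocycle-tracing entirely and is both shorter and more robust.
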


\subsection{The balanced coefficient systems}

Recall that $n'=n$ or $n'=n-1$. Suppose that $\mu\in(\Z^n)^{\mathcal{E}_{\K}}$, $\nu\in(\Z^{n'})^{\mathcal{E}_{\K}}$ are dominant weights and $F_{\mu}$, $F_{\nu}$ are associated irreducible holomorphic finite-dimensional representations. Write $F_{\chi}:=\C$ for the holomorphic one-dimensional representation of $\GL_{n'}(\K\otimes_{\R}\C)$ whose differential equals the complexified differential of the character $\chi\circ\det:\GL_{n'}(\K)\to\C^{\times}$. Denote $\xi:=(\mu,\nu)$, and in the case of $n'=n$ we assume that the complexified differential of $\eta$ equals the differential of the product of the central characters of $F_\mu$ and $F_\nu$.
	
	\begin{dfnp}\label{def:balance}
		An algebraic character $\chi$  of $\K^\times$ is said to be $\xi$-balanced if
		\[
		\begin{cases}
			\mathrm{Hom}_{\GL_{n'}(\K\otimes_{\R}\C)}\left(F_{\mu}^{\vee}\otimes F_{\nu}^{\vee}\otimes F_{\chi}^{\vee},\C\right)\neq \{0\},& \text{when }n'=n-1;\\
			\mathrm{Hom}_{\GL_{n'}(\K\otimes_{\R}\C)}\left(F_{\mu}^{\vee}\otimes F_{\nu}^{\vee}\otimes F_{\eta,\chi}^{\vee},\C\right)\neq \{0\},&\text{when }n'=n.
		\end{cases}
		\]
		Denote the set of all $\xi$-balanced characters by $\RB(\xi)$.
	\end{dfnp}
	
	We assume $\chi\in\RB(\xi)$, which is the necessary archimedean condition for studying L-function via modular symbols. The conditions on $\chi$ being $\xi$-balanced are given by Lemma \ref{lem-balanced}. We remark that no critical condition is required in this article (but see Lemma \ref{lem-bc}).
	
	Clearly by \eqref{Fmu}, when $n'=n-1$, $F_{\mu}\otimes F_{\nu}\otimes F_{\chi}$ is  realized as a space of algebraic functions on 
    \[
    \CX_{n,n',\K}:=\GL_n(\K\otimes_{\R}\C)\times\GL_{n-1}(\K\otimes_{\R}\C).
    \]
    When $n'=n$,  we realize $F_{\eta,\chi}$ as a space of polynomial coefficient differential operators on $(\K\otimes_{\R}\C)^{1\times n}=\prod_{\iota'\in\CE_\K}\C_{\iota'}^{1\times n}$ in Section \ref{sec:Translation}. Then using \eqref{Fmu} and the Fourier transform \eqref{partial-F}, $F_{\mu}\otimes F_{\nu}\otimes F_{\eta,\chi}$ is also  realized as a space of algebraic functions on the complex variety $\CX_{n,n,\K}$ given by 
	\[
    \begin{cases}
		  \GL_n(\K\otimes_\R\C)\times\GL_n(\K\otimes_\R\C)\times(\K\otimes_\R\C)^{1\times n},  & \text{Case ($-$)}; \\
		\GL_n(\K\otimes_\R\C)\times\GL_n(\K\otimes_\R\C)\times(\K\otimes_\R\C)^{n\times 1}, & \text{Case ($+$)}; \\
		\GL_n(\C_{\iota})\times \GL_n(\C_{\iota})\times \BC_{\iota}^{1\times n}\times \GL_n(\C_{\bar\iota})\times \GL_n(\C_{\bar\iota})\times \BC_{\bar\iota}^{n\times 1}, &
        \text{Case ($\pm$)}.
        \end{cases}
	\]

	Following \cite[Section 1.3]{LLSS}, define a family  $\{z_k\in \GL_k(\BZ)\}_{k\in \BN}$ of matrices
	inductively by
	\[
	z_0:=\emptyset\ \  (\textrm{the unique element of $\GL_0(\BZ)$}), \quad  z_1:=[1]\quad \text{and}
	\]
	\begin{equation}\label{zn}
		z_k :=
		\begin{bmatrix}
			w_{k-1}& 0 \\
			0 & 1 
		\end{bmatrix}
		\begin{bmatrix}
			z_{k-2}^{-1}& 0 \\
			0 & 1_2 
		\end{bmatrix}
		\begin{bmatrix}
			{}^tz_{k-1}  w_{k-1} z_{k-1}& {}^t e_{k-1}\\
			0 & 1 \\
		\end{bmatrix}, \quad k\geq 2,
	\end{equation}
	where	$e_{k-1}:=[0 \ \cdots \  0 \ 1]\in \BZ^{1\times(k-1)}$ is a row vector.  For $g\in \GL_n(\BC)$, write $g^\tau:={}^tg^{-1}$ for the transpose inverse of $g$. Put 
	\be \label{elet:z}
	z:=\left(z_n, \begin{bmatrix}  z_{n-1} \\ & 1\end{bmatrix}, e_n\right),\quad 
	\check z:=\left(w_n z_n^\tau, w_n \begin{bmatrix} z_{n-1}^\tau \\ & 1\end{bmatrix}, {}^te_n\right),
	\ee
viewed as elements of $\GL_n(\Z)\times\GL_n(\Z)\times\Z^{1\times n}$ and $\GL_n(\Z)\times\GL_n(\Z)\times\Z^{n\times 1}$ respectively. 

Define an element $\tilde z\in \CX_{n, n', \K}$ by
	\[
	\tilde z:=\begin{cases}
    (z_n, z_{n-1}), &  \text{Case $n' =n-1$}; \\
		z, & \text{Case ($-$)};\\
		\check{z}, & \text{Case ($+$)};\\
		(z^\iota, \check{z}^{\bar\iota}), & \text{Case ($\pm$)}.
	\end{cases}
	\]
We have the following proposition which is proved in \cite[Proposition 3.1]{LLS24} when $n'=n-1$. When $n'=n$, it follows from the fact that the diagonal right action of $\GL_n(\C)$ on $\mathcal{B}_n(\C)\times\mathcal{B}_n(\C)\times\C^{1\times n}$ (resp. $\mathcal{B}_n(\C)\times\mathcal{B}_n(\C)\times\C^{n\times 1}$) has a unique open orbit represented by $z$ (resp. $\check{z}$), where $\mathcal{B}_n(\C):=\overline{\RB}_n(\C)\backslash\GL_n(\C)$.

	\begin{prp}\label{prop:phixichi}
		There is a unique element 
		\begin{equation}\label{phixichi}
				\phi_{\xi,\chi}\in \begin{cases}
					(F_{\mu}\otimes F_{\nu}\otimes F_{\chi})^{\GL_{n'}(\K\otimes_{\R}\C)},  & \text{when }n'=n-1;\\
					(F_{\mu}\otimes F_{\nu}\otimes F_{\eta,\chi})^{\GL_{n'}(\K\otimes_{\R}\C)}, & \text{when }n'=n,
				\end{cases}
		\end{equation}
		such that $\phi_{\xi,\chi}(\tilde z)=1$.
	\end{prp}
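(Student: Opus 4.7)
The case $n'=n-1$ is already handled in \cite[Proposition 3.1]{LLS24}, so the plan focuses on $n'=n$. The strategy is to realize $F_{\mu}\otimes F_{\nu}\otimes F_{\eta,\chi}$ as a space of regular functions on the explicitly described complex variety $\CX_{n,n,\K}$, and then to exploit an open-orbit property for the diagonal right $\GL_n(\K\otimes_\R\C)$ action on that variety.

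First, via the algebraic-induction realizations in \eqref{Fmu}, $F_{\mu}\otimes F_{\nu}$ is naturally a space of algebraic functions on $\GL_n(\K\otimes_\R\C)\times\GL_n(\K\otimes_\R\C)$ that transform in the expected way under the left action of $\overline{\RB}_n\times\overline{\RB}_n$. The realization of $F_{\eta,\chi}$ as polynomial-coefficient differential operators on $(\K\otimes_\R\C)^{1\times n}$ (to be set up in Section \ref{sec:Translation}) composed with the Fourier transform \eqref{partial-F} then turns $F_{\mu}\otimes F_{\nu}\otimes F_{\eta,\chi}$ into a space of algebraic functions on $\CX_{n,n,\K}$, on which $\GL_n(\K\otimes_\R\C)$ acts diagonally on the right.

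Second, I would establish that the diagonal right action of $\GL_n(\C)$ on $\mathcal{B}_n(\C)\times\mathcal{B}_n(\C)\times\C^{1\times n}$ (respectively on $\mathcal{B}_n(\C)\times\mathcal{B}_n(\C)\times\C^{n\times 1}$) has a unique open orbit, represented by the class of $z$ (respectively $\check z$). The source has dimension $2\binom{n}{2}+n=n^{2}=\dim\GL_n(\C)$, so openness of the orbit reduces to verifying that the stabilizer of $z$ (respectively $\check z$) is trivial. This can be organized as a direct inductive computation using the recursive formula \eqref{zn}, reducing to the trivial base cases $k=0,1$ in the same spirit as \cite[Proposition 3.1]{LLS24}. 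Irreducibility of the source then forces uniqueness of the open orbit. In Case $(\pm)$, $\CX_{n,n,\K}$ factors as a product over $\{\iota,\bar\iota\}$, and the analysis reduces to Cases $(-)$ and $(+)$ applied factor by factor.

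Finally, uniqueness of $\phi_{\xi,\chi}$ follows at once from the open-orbit property, since a $\GL_n(\K\otimes_\R\C)$-invariant regular function on $\CX_{n,n,\K}$ is determined by its value at any point of the open orbit, in particular at $\tilde z$. The $\xi$-balanced assumption supplies a nonzero invariant function; since it cannot vanish on the open orbit, it is nonzero at $\tilde z$, and rescaling yields the required element with $\phi_{\xi,\chi}(\tilde z)=1$. The main technical point will be making the Fourier-transform realization of $F_{\eta,\chi}$ $\GL_n$-equivariant in a form compatible with the description of $\CX_{n,n,\K}$ (especially keeping track of the asymmetric treatment of $\iota$ and $\bar\iota$ in Case $(\pm)$); once this bookkeeping is settled, the orbit-geometric argument carries over essentially verbatim from the $n'=n-1$ case of \cite{LLS24}.
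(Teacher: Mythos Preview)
Your proposal is correct and follows essentially the same approach as the paper. The paper's own justification is even terser: it simply states that the case $n'=n-1$ is \cite[Proposition 3.1]{LLS24}, and that the case $n'=n$ follows from the fact that the diagonal right $\GL_n(\C)$-action on $\mathcal{B}_n(\C)\times\mathcal{B}_n(\C)\times\C^{1\times n}$ (resp.\ $\mathcal{B}_n(\C)\times\mathcal{B}_n(\C)\times\C^{n\times 1}$) has a unique open orbit represented by $z$ (resp.\ $\check z$); you have correctly unpacked how this open-orbit fact, together with the one-dimensionality from the balanced hypothesis (Lemma \ref{lem-balanced}), yields both existence and the normalization.
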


	\subsection{Rankin-Selberg integrals}\label{sec:1.3}
	
	 Take $\pi_{\mu}\in\Omega(\mu)$ and $\pi_{\nu}\in\Omega(\nu)$ and fix generators
	\[
	\lambda_{\mu}\in\mathrm{Hom}_{\RN_n(\K)}(\pi_{\mu},\psi_{n,\K}),\qquad\lambda'_{\nu}\in\mathrm{Hom}_{\RN_{n'}(\K)}(\pi_{\nu},\overline{\psi_{n',\K}}).
	\]
	Set $\xi:=(\mu,\nu)$ and denote $\pi_{\xi}:=\pi_{\mu}\widehat{\otimes}\pi_{\nu}$ for the completed projective tensor product. Denote $\widehat{\K^{\times}}:=\mathrm{Hom}(\K^{\times},\C^{\times})$ for the set of characters of $\K^{\times}$, which is naturally a commutative complex Lie group. For every $\chi'\in\widehat{\K^{\times}}$, write $\mathrm{Re}(\chi')$ for the real number such that 
    \[
    \textrm{$|\chi'(a)|=|a|_{\K}^{\mathrm{Re}(\chi')}\quad $ for all $a\in\K^{\times}$.}
	\]

	Fix the Haar measure  $\mathrm{d}x$ on $\K$ to be the self-dual measure with respect to $\psi_{\K}$. The Haar measure on $\RN_{n'}(\K)$ is fixed to be the product of the Haar measures on $\K$,  and the measure on $\RZ_n(\K)=\K^{\times}$ is fixed to be the standard multiplicative measure $\frac{\mathrm{d}x}{|x|_{\K}}$. Denote by $\mathfrak{M}_{n',\K}$ the one-dimensional complex vector space of left (or right) invariant measures on $\GL_{n'}(\K)$. Any invariant measure $\mathrm{d}g$ on $\GL_{n'}(\K)$ induces quotient measures on $\RN_{n'}(\K)\backslash\GL_{n'}(\K)$ and $\RZ_{n'}(\K)\RN_{n'}(\K)\backslash\GL_{n'}(\K)$.  We will denote both of these quotient measures   by $\overline{\mathrm{d}}g$.
	
	\subsubsection{ }
	Suppose that $n'=n-1$. We view $\GL_{n-1}$ as an algebraic subgroup of $\GL_n$ via the embedding 
	\[
	g\mapsto\begin{bmatrix}
		g & 0\\
		0 & 1
	\end{bmatrix}.
	\]
	We have the Rankin-Selberg integral
\[
\RZ_{\xi,\chi'}:\pi_{\xi}\otimes(\chi'\circ\det)\to\mathfrak{M}_{n,\K}^{\ast}\quad (\phantom{\,}^\ast\textrm{ indicates the dual space})
\]
defined by 
	\begin{equation}
		\RZ_{\xi,\chi'}(f,f';\mathrm{d}g):=\int_{\RN_{n'}(\K)\backslash\GL_{n'}(\K)}\left\langle\lambda_{\mu},g.f\right\rangle\left\langle\lambda_{\nu}',g.f'\right\rangle\chi'(\det g)\overline{\mathrm{d}}g,
	\end{equation}
	where $f\in\pi_{\mu}$, $f'\in\pi_{\nu}$, $\chi'\in\widehat{\K^{\times}}$, and $\mathrm{d}g\in\mathfrak{M}_{n',\K}$. It converges absolutely when $\mathrm{Re}(\chi')$ is sufficiently large and has a meromorphic continuation to the complex Lie group $\widehat{\K^{\times}}$. 
    By \cite{J09}, the normalized Rankin-Selberg integral
	\[
	\RZ_{\xi,\chi'}^{\circ}(f,f';\mathrm{d}g):=\frac{1}{\RL(\frac{1}{2},\pi_{\mu}\times\pi_{\nu}\times\chi')}\RZ_{\xi,\chi'}(f,f';\mathrm{d}g)
	\]
    is holomorphic and yields a nonzero linear functional
	\[
	\RZ_{\xi,\chi'}^{\circ}\in\mathrm{Hom}_{\GL_{n'}(\K)}\left(\pi_{\xi}\otimes(\chi'\circ\det),\mathfrak{M}_{n',\K}^{\ast}\right).
	\]
	
	\subsubsection{ }
	
	Suppose that  $n'=n$ and  $\eta$ is the product of the central characters of $\pi_{\mu}$ and $\pi_{\nu}$ (note that when $\K\cong\R$ and $n$ is even, the equality \eqref{nonzerohc}  automatically holds when $\eta$ is such a product). 
    
    We have the Rankin-Selberg integral
    \[
\RZ_{\xi,\chi'}:\pi_{\xi}\,\widehat{\otimes}\, I_{\eta,\chi'}\to\mathfrak{M}_{n,\K}^{\ast}
    \]
    defined by
	\begin{equation}\label{rsintegral}
		\RZ_{\xi,\chi'}(f,f',\varphi;\mathrm{d}g):=\int_{\RZ_n(\K)\RN_n(\K)\backslash\GL_{n}(\K)}\left\langle \lambda_{\mu},g.f\right\rangle\left\langle \lambda'_{\nu},g.f'\right\rangle\varphi(g)\overline{\mathrm{d}}g,
	\end{equation}
	where $f\in\pi_{\mu}$, $f'\in\pi_{\nu}$, $\varphi\in I_{\eta,\chi'}$ with $\chi'\in\widehat{\K^{\times}}$, and $\mathrm{d}g\in\mathfrak{M}_{n,\K}$. It converges absolutely when $\mathrm{Re}(\chi')$ is sufficiently large and has a meromorphic continuation to the complex Lie group $\widehat{\K^{\times}}$. 

When $\chi\in\RB(\xi)$ is balanced, by Proposition \ref{RSnn} the normalized Rankin-Selberg integral
	\begin{equation} \label{normalizedRS}
	\RZ_{\xi,\chi'}^{\circ}(f,f',\varphi;\mathrm{d}g):=\frac{\RL(0,\eta\chi'^n)}{\RL(0,\pi_{\mu}\times\pi_{\nu}\times\chi')}\cdot\RZ_{\xi,\chi'}(f,f',\varphi;\mathrm{d}g)
	\end{equation}
    is holomorphic at $\chi'=\chi$ and yields a nonzero linear functional 
    \[
	\RZ_{\xi,\chi}^{\circ}\in\mathrm{Hom}_{\GL_{n}(\K)}\left(\pi_{\xi}\,\widehat{\otimes}\,I_{\eta,\chi},\mathfrak{M}_{n,\K}^{\ast}\right).
	\]

	\subsection{Archimedean modular symbols}
	
	Let
	\[
	\RK_{n',\K}:=\begin{cases}
		\RO(n'), &\K\cong\R;\\
		\RU(n'), &\K\cong\C,
	\end{cases}
	\]
	be a maximal compact subgroup of $\GL_{n'}(\K)$. Denote $\mathfrak{gl}_{n',\K}$ and $\mathfrak{k}_{n',\K}$ for the Lie algebras of $\GL_{n'}(\K)$ and $\RK_{n',\K}$ respectively. Let
	\[
	d_{n,n',\K}: =\begin{cases}
		\mathrm{dim}_{\R}(\mathfrak{gl}_{n-1,\K}/\mathfrak{k}_{n',\K}), & n'=n-1;\\
		\mathrm{dim}_{\R}(\mathfrak{gl}_{n,\K}/(\mathfrak{k}_{n,\K}\oplus\R)), & n'=n.
	\end{cases}
	\]
Then we have the following numerical coincidence
\[
	\begin{cases}
		b_{n,\K}+b_{n-1,\K}=d_{n,n-1,\K}, & \text{Case $n'=n-1$};\\
		b_{n,\K}+t_{n,\K}+c_{n,\K}=d_{n,n,\K}, & \text{Case $(-)$};\\
		b_{n,\K}+b_{n,\K}+c_{n,\K}=d_{n,n,\K}, & \text{Case $(\pm)$}.
	\end{cases}
    \]
We define a vector space $\RH_{\xi,\chi}$ as follows, which is a representation of $\pi_0(\K^{\times})^3$.
	\begin{itemize}
		 \item  Case $n'=n-1$: 
		\[
		\begin{aligned}
			\RH_{\xi,\chi}:=&\,\RH_{\mathrm{ct}}^{b_{n,\K}}(\R_+^{\times}\backslash\GL_n(\K)^0;F_{\mu}^{\vee}\otimes\pi_{\mu})\otimes\RH_{\mathrm{ct}}^{b_{n-1,\K}}(\R_+^{\times}\backslash\GL_{n-1}(\K)^0;F_{\nu}^{\vee}\otimes\pi_{\nu})\\
			& \otimes\RH_{\mathrm{ct}}^0(\R_+^{\times}\backslash\GL_{n-1}(\K)^0;F_{\chi}^{\vee}\otimes(\chi\circ\det)).
		\end{aligned}
		\] 
		\item Case $(-)$:
		\[
		\begin{aligned}
			\RH_{\xi,\chi}:=&\,\RH_{\mathrm{ct}}^{b_{n,\K}}(\R_+^{\times}\backslash\GL_n(\K)^0;F_{\mu}^{\vee}\otimes\pi_{\mu})\otimes\RH_{\mathrm{ct}}^{t_{n,\K}}(\R_+^{\times}\backslash\GL_n(\K)^0;F_{\nu}^{\vee}\otimes\pi_{\nu})\\
			& \otimes\RH_{\mathrm{ct}}^{c_{n,\K}}(\R_+^{\times}\backslash\GL_{n}(\K)^0;F_{\eta,\chi}^{\vee}\otimes I_{\eta,\chi}).
		\end{aligned}
		\] 
		\item Case $(\pm)$:
		\[
		\begin{aligned}
			\RH_{\xi,\chi}:=&\,\RH_{\mathrm{ct}}^{b_{n,\K}}(\R_+^{\times}\backslash\GL_n(\K)^0;F_{\mu}^{\vee}\otimes\pi_{\mu})\otimes\RH_{\mathrm{ct}}^{b_{n,\K}}(\R_+^{\times}\backslash\GL_n(\K)^0;F_{\nu}^{\vee}\otimes\pi_{\nu})\\
			& \otimes\RH_{\mathrm{ct}}^{c_{n,\K}}(\R_+^{\times}\backslash\GL_{n}(\K)^0;F_{\eta,\chi}^{\vee}\otimes I_{\eta,\chi}).
		\end{aligned}
		\] 
	\end{itemize}
	
	We have identifications
	\begin{equation}\label{idmeasure}
		\begin{aligned}
			\mathfrak{M}_{n',\K}&=\{\text{invariant measure on }\R_+^{\times}\backslash\GL_{n'}(\K)/\RK_{n',\K}^0\}\\
			&=\{\text{invariant measure on }\GL_{n'}(\K)/\RK_{n',\K}^0\}
		\end{aligned}
	\end{equation}
	by fixing the Haar measure on $\RK_{n',\K}^0$ with total volume $1$, and the standard multiplicative measure $\frac{\mathrm{d}x}{x}$ on $\R_+^{\times}$. Denote by $\mathfrak{O}_{n',\K}$ the one-dimensional space of $\GL_{n'}(\K)$-invariant sections of the orientation line bundle of $\R_+^{\times}\backslash\GL_{n'}(\K)/\RK_{n',\K}^0$ or $\GL_{n'}(\K)/\RK_{n',\K}^0$ with complex coefficients, respectively when $n'=n$ or $n-1$. See \cite[Section 3.1]{LLS24} for more details.
	
	In what follows we define the archimedean modular symbol, which is a linear functional
	\[
	\wp_{\xi,\chi}:\RH_{\xi,\chi}\otimes\mathfrak{O}_{n',\K}\to\C.
	\]
    When $n'=n-1$, $\wp_{\xi,\chi}$ is defined in \cite[Section 3.2]{LLS24} as the composition of 
  \begin{eqnarray*}
    &  &   \oH_{\xi,\chi}\otimes \frak O_{n',\K} \\
& \xrightarrow{{\rm res}} & \oH^{d_{n,n',\K}}_{\rm ct}(\GL_{n'}(\K)^0; F_\mu^\vee\otimes F_\nu^\vee\otimes F_\chi^\vee\otimes 
\pi_\mu\,\widehat{\otimes}\, \pi_\nu\otimes (\chi\circ\det))\otimes \frak O_{n',\K}\\
	 & \xrightarrow{\phi_{\xi,\chi}\otimes \oZ^\circ_{\xi,\chi}}& \oH^{d_{n,n',\K}}_{\rm ct}(\GL_{n'}(\K)^0; \frak M_{n',\K}^*)\otimes \frak O_{n',\K}=\BC.
	\end{eqnarray*}
    	When $n'=n$, $\wp_{\xi,\chi}$ is defined as the composition of 
        \begin{eqnarray*}
         &&\oH_{\xi,\chi}\otimes \frak O_{n,\K}  \\
		&\xrightarrow{\textrm{res}} &\oH^{d_{n,n,\K}}_{\rm ct}(\BR^\times_+\bs \GL_n(\K)^0; F_\mu^\vee\otimes F_\nu^\vee\otimes 
        F_{\eta,\chi}^\vee\otimes \pi_\mu \,\widehat{\otimes}\, \pi_\nu\,\widehat{\otimes}\, I_{\eta,\chi})\otimes \frak{O}_{n,\K}  \\
		 & \xrightarrow{\phi_{\xi,\chi}\otimes\oZ^\circ_{\xi,\chi}} &\oH^{d_{n,n,\K}}_{\rm ct}(\BR^\times_+\bs \GL_n(\K)^0; \frak M_{n,\K}^*)\otimes \frak{O}_{n,\K}=\BC.
	\end{eqnarray*}
	In both cases, the second map is induced by $\phi_{\xi,\chi}$ defined in \eqref{phixichi} and the Rankin-Selberg integral $\RZ_{\xi,\chi}^{\circ}$. Here and henceforth, `$\mathrm{res}$' indicates the restriction map of   the cohomologies through the diagonal embedding. 
    See \cite[Section 3.1]{LLS24} for detailed explanation of the last equality, which uses the identification \eqref{idmeasure}. 

    \begin{remarkp}
     For \emph{Case $(+)$}, we have $b_{n,\K}+b_{n,\K}+c_{n,\K}>d_{n,n,\K}$. Hence the archimedean modular symbol defined in a similar fashion as above will be identically zero. However we note that if $(\pi_{\mu},\pi_{\nu},\chi)$ is of \emph{Case $(+)$}, then $(\pi_{\mu}^{\vee},\pi_{\nu}^{\vee},\chi^{-1}|\cdot|_{\K})$ is of \emph{Case $(-)$}. Therefore, the study of special values of L-functions in    \emph{Case $(+)$} can be recovered from \emph{Case $(-)$} through the functional equation. Thus we will not consider the archimedean period relations in  \emph{Case  $(+)$}. 
\end{remarkp}

	\subsection{Archimedean period relations and non-vanishing}
	\label{sec:1.6}
	Set $\xi_0:=(0_n,0_{n'})$. Recall from Proposition \ref{prop:translationpi} the map 
	\[
	\jmath_\mu\in \Hom_{\GL_{n}(\K)}(\pi_{0_{n}}, F_\mu^\vee\otimes \pi_\mu)
	\]
	which is specified by the generators 
	\[
	v_\mu\in F_\mu^{\RN_{n}(\K\otimes_\R \C)}, \quad \lambda_\mu\in\mathrm{Hom}_{\RN_{n}(\K)}(\pi_{\mu},{\psi_{n,\K}}) \quad \textrm{and}\quad \lambda_{0_{n}} \in\mathrm{Hom}_{\RN_{n}(\K)}(\pi_{0_{n}},{\psi_{n,\K}}).
	\]
	Here  $\pi_{0_n}$ is the unique representation in $\Omega(0_n)$ with the same central character as that of $F_\mu^\vee\otimes \pi_\mu$. Likewise let $\pi_{0_{n'}}$ be the unique representation in $\Omega(0_{n'})$ with the same central character as that of $F_\nu^\vee\otimes \pi_\nu$. With fixed generators
	\[
	\lambda'_\nu\in\mathrm{Hom}_{\RN_{n'}(\K)}(\pi_{\nu},\overline{\psi_{n',\K}}), \quad \lambda'_{0_{n'}} \in\mathrm{Hom}_{\RN_{n}(\K)}(\pi_{0_{n'}},\overline{\psi_{n',\K}}),
	\]
	and  
	\[
	v_\nu\in F_\nu^{\RN_{n'}(\K\otimes_\R \C)}\quad(\textrm{the function that has constant value $1$ on $\RN_{n'}(\K\otimes_\R \C)$}),
	\]
	as in Proposition \ref{prop:translationpi} we get a map 
	\be\label{jmathnu}
	\jmath'_\nu\in \Hom_{\GL_{n'}(\K)}(\pi_{0_{n'}}, F_\nu^\vee\otimes \pi_\nu).
	\ee
	When $n'=n$, note that $\pi_{\xi_0}$ has central character $\eta_0$ and we have the homomorphism 
	\[
	\jmath_{\eta,\chi}\in\mathrm{Hom}_{\GL_n(\K)}\left(F_{\eta_0,\chi_0}^{\vee}\otimes I_{\eta_0,\chi_0},F_{\eta,\chi}^{\vee}\otimes I_{\eta,\chi}\right)
	\]
	from Proposition \ref{prop:translation}. When $n'=n-1$, the equality \[
    F_{\chi}^{\vee}\otimes(\chi\circ\det)=\chi_0\circ\det
    \]
    yields an isomorphism
	\[
	\jmath_\chi: \oH^0_{\rm ct}(\BR^\times_+\bs \GL_{n'}(\K)^0; \chi_0\circ\det) \xrightarrow{\cong} 
	\oH^0_{\rm ct}(\BR^\times_+\bs \GL_{n'}(\K)^0; F_\chi^\vee\otimes \chi\circ\det). 
	\]
	
	In summary, we have the isomorphism 
	\[
	\jmath_{\xi,\chi}:\RH_{\xi_0,\chi_0}\xrightarrow{\cong}\RH_{\xi,\chi}
	\]
	given by
	\begin{equation}\label{jxichi}
		\jmath_{\xi,\chi}:=\begin{cases}
			\jmath_\mu\otimes \jmath'_\nu \otimes \jmath_\chi, & n'=n-1;\\
			\jmath_{\mu}\otimes\jmath'_{\nu}\otimes\jmath_{\eta,\chi}, &n'=n.
		\end{cases}
	\end{equation}

	To state our main theorem, we introduce several constants depending on $\xi$ and $\chi$. Let
	\begin{equation}
		\begin{aligned}
			c_{\xi,\chi}&:=\prod_{i+k\leq n}(\varepsilon_{\psi_\R}\cdot\mathrm{i})^{\mu_i^{\iota}+\mu_i^{\overline{\iota}}+\nu_k^{\iota}+\nu_k^{\overline{\iota}}+\chi_{\iota}+\chi_{\overline{\iota}}},\\
          	c'_{\xi,\chi}&:=\prod_{i+k\leq n}(\varepsilon_{\psi_\R}\cdot\mathrm{i})^{\mu_i^{\iota}+\mu_i^{\overline{\iota}}+\nu_k^{\iota}+\nu_k^{\overline{\iota}}+\chi_{\iota}+\chi_{\overline{\iota}}-1},\\
			\varepsilon_{\xi,\chi}&:=\prod_{i>k,\,i+k\leq n}(-1)^{\mu_i^{\iota}+\mu_i^{\overline{\iota}}+\nu_k^{\iota}+\nu_k^{\overline{\iota}}+\chi_{\iota}+\chi_{\overline{\iota}}},\\
			\varepsilon'_{\xi,\chi}&:=\prod_{i=1}^n(-1)^{\mu_i^{\overline{\iota}}+n(\nu_i^{\overline{\iota}}-1)+\chi_{\overline{\iota}}}
			\cdot\prod_{i>k,\,i+k\leq n}(-1)^{\mu_i^{\iota}+\nu_k^{\iota}+\mu_{n+1-i}^{\overline{\iota}}+\nu_{n+1-k}^{\overline{\iota}}+\chi_{\iota}+\chi_{\overline{\iota}}}.
		\end{aligned}
	\end{equation}
In Case $(-)$, define  $g_{\xi,\chi}(s)$ as follows: 
    \begin{itemize}
   \item When $\K\cong\R$ and $n$ is even,
   \[
   g_{\xi,\chi}(s):=\frac{\RL(s,\eta_0\chi_0^n)}{\RL(s,\eta\chi^n)}\cdot\prod_{i=1}^{\frac{n}{2}}\frac{\Gamma_\BC\left(s+\max\{\mu_i^{\iota}+\nu_{n+1-i}^{\iota}+\chi_{\iota},\mu_{n+1-i}^{\iota}+\nu_{i}^{\iota}+\chi_{\iota}\}\right)}{\Gamma_{\C}(s)}.
   \]
   \item When $\K\cong\R$ and $n$ is odd,
   \[
   \begin{aligned}
    g_{\xi,\chi}(s):=&\frac{\RL(s,\eta_0\chi_0^n)}{\RL(s,\eta\chi^n)}\cdot\frac{\Gamma_{\R}\left(s+\mu_{\frac{n+1}{2}}^{\iota}+\nu_{\frac{n+1}{2}}^{\iota}+\chi_{\iota}+\delta((\iota|_{\K^{\times}})^{\mu_{\frac{n+1}{2}}^{\iota}+\nu_{\frac{n+1}{2}}^{\iota}}\cdot\chi)\right)}{\Gamma_{\R}(s+\delta(\chi_0))}\\
  &\cdot \prod_{i=1}^{\frac{n-1}{2}}\frac{\Gamma_\BC\left(s+\max\{\mu_i^{\iota}+\nu_{n+1-i}^{\iota}+\chi_{\iota},\mu_{n+1-i}^{\iota}+\nu_{i}^{\iota}+\chi_{\iota}\}\right)}{\Gamma_{\C}(s)}.
  \end{aligned}
  \]
  Here for any character $\omega\in\widehat{\K^{\times}}$, we write $\delta(\omega)\in\{0,1\}$ such that $\omega(-1)=(-1)^{\delta(\omega)}$.
    \item When $\K\cong\BC$,
    \[
    g_{\xi,\chi}(s):=\frac{\RL(s,\eta_0\chi_0^n)}{\RL(s,\eta\chi^n)}\cdot\prod_{i=1}^n\frac{\Gamma_\BC\left(s+\max_{\iota'\in\mathcal{E}_{\K}}\{\mu_i^{\iota'}+\nu_{n+1-i}^{\iota'}+\chi_{\iota'}\}\right)}{\Gamma_{\C}(s)}.
\]
\end{itemize}
Here
	\[
	\Gamma_{\K}(s)=\begin{cases}
		\pi^{-s/2}\Gamma(s/2), & \K\cong\R;\\
		2(2\pi)^{-s}\Gamma(s), & \K\cong\C,
	\end{cases}
	\]
	with $\Gamma(s)$ the standard gamma function.
    
    By the balanced condition (Lemma \ref{lem-balanced}), $g_{\xi,\chi}(s)$ is holomorphic and nonzero at $s=0$. 
    
Recall that $\oH_{\xi,\chi}$ is a representation of $\pi_0(\K^\times)^3$. For every character $\varepsilon$ of $\pi_0(\K^\times)^3$, denote by 
$\oH_{\xi,\chi}[\varepsilon]$ the $\varepsilon$-isotypic component of $\oH_{\xi,\chi}$, which is at most one-dimensional.


	\begin{thmp} \label{mainthm}
		Retain the notations and assumptions as above.
		\begin{itemize}
			\item[(a)] The  diagram 
			\[
			\begin{CD}
            \oH_{\xi_0,\chi_0} \otimes \frak O_{n',\K} @> \wp_{\xi_0,\chi_0}>> \C\\
				@V\jmath_{\xi,\chi} VV @| \\
				\oH_{\xi,\chi}\otimes \frak O_{n',\K} @>\Omega_{\xi, \chi}\cdot \wp_{\xi,\chi} >>  \C 
			\end{CD}
			\]
			commutes, where
			\be
			\Omega_{\xi, \chi}=
			\begin{cases}
				c_{\xi,\chi} \cdot \varepsilon_{\xi,\chi},& \emph{Case $n'=n-1$};\\
c_{\xi,\chi} \cdot \varepsilon_{\xi,\chi}\cdot g_{\xi,\chi}(0),& \emph{Case $(-)$};\\
				c'_{\xi,\chi} \cdot \varepsilon'_{\xi,\chi},& \emph{Case $(\pm)$},
			\end{cases}
			\ee
			\item[(b)] The archimedean modular symbol $\wp_{\xi,\chi}$ is non-vanishing. More precisely, its restriction to $\oH_{\xi,\chi}[\varepsilon]\otimes
            \frak O_{n',\K}$ is non-vanishing
           for every character $\varepsilon=\varepsilon_1\otimes \varepsilon_2\otimes \varepsilon_3$ of $\pi_0(\K^\times)^3$ occurring in 
           $\oH_{\xi,\chi}$ such that 
           $\varepsilon_1\cdot\varepsilon_2\cdot\varepsilon_3 = \sgn_{\K^\times}^{n'-1}$. 
		\end{itemize}
	\end{thmp}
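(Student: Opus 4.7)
The overall plan is to first establish the archimedean period relation (a), which is the computational heart of the theorem, and then deduce the non-vanishing (b) by transport along the isomorphism $\jmath_{\xi,\chi}:\oH_{\xi_0,\chi_0}\to\oH_{\xi,\chi}$ from the trivial-weight case. Because each of $c_{\xi,\chi}$, $\varepsilon_{\xi,\chi}$, $\varepsilon'_{\xi,\chi}$ and $g_{\xi,\chi}(0)$ is nonzero (the last one by the balanced assumption, cf.\ Lemma \ref{lem-balanced}), part (a) identifies $\wp_{\xi,\chi}|_{\oH_{\xi,\chi}[\varepsilon]}$ with a nonzero scalar multiple of $\wp_{\xi_0,\chi_0}|_{\oH_{\xi_0,\chi_0}[\varepsilon]}$ for each character $\varepsilon$ of $\pi_0(\K^\times)^3$, thereby reducing (b) to the trivial-weight case. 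The translation maps $\jmath_\mu$, $\jmath'_\nu$, $\jmath_\chi$ (when $n'=n-1$) or $\jmath_{\eta,\chi}$ (when $n'=n$) are constructed in Propositions \ref{prop:translationpi} and \ref{prop:translation} precisely so that they descend to isomorphisms on the relevant cohomology groups, so the transport is compatible with the isotypic decomposition under $\pi_0(\K^\times)^3$.

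For part (a), I would work at the level of cocycles: pick cohomology class representatives on the $(\xi_0,\chi_0)$ side, push them through $\jmath_{\xi,\chi}$, and compare the two integral expressions for $\wp_{\xi,\chi}\circ\jmath_{\xi,\chi}$ and $\wp_{\xi_0,\chi_0}$. The defining diagrams \eqref{diagjmu} and \eqref{jetachidiagram} supply the key identities $(v_\mu\otimes\lambda_\mu)\circ\jmath_\mu=\lambda_{0_n}$, $(v_\nu\otimes\lambda'_\nu)\circ\jmath'_\nu=\lambda'_{0_{n'}}$ and $\ell_{\eta,\chi}\circ\jmath_{\eta,\chi}=\ell_{\eta_0,\chi_0}$, which let me rewrite the Rankin-Selberg integrands appearing in $\RZ^\circ_{\xi,\chi}(\jmath_\mu(f),\jmath'_\nu(f'),\jmath_{\eta,\chi}(\varphi))$ in terms of their trivial-weight counterparts. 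The explicit realization of $F_\mu\otimes F_\nu\otimes F_\chi$ (or $F_\mu\otimes F_\nu\otimes F_{\eta,\chi}$) as algebraic functions on $\CX_{n,n',\K}$, together with the normalization $\phi_{\xi,\chi}(\tilde z)=1$ from Proposition \ref{prop:phixichi}, then forces a scalar comparison: $c_{\xi,\chi}$ emerges as the power of $\varepsilon_{\psi_\R}\cdot\mathrm{i}$ produced when the highest-weight vectors $v_\mu$, $v_\nu$ (and, when $n'=n$, the polynomial-differential-operator description of $F_{\eta,\chi}$) are evaluated at $\tilde z$, while $\varepsilon_{\xi,\chi}$ or $\varepsilon'_{\xi,\chi}$ record signs from conjugating past the longest Weyl element $w_n$ appearing in $\tilde z$ and, in Case $(\pm)$, from the complex-conjugate twist that distinguishes $\iota$ from $\bar\iota$. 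In Case $(-)$, the extra factor $g_{\xi,\chi}(0)$ is precisely the ratio of archimedean $L$-factor normalizations imposed by \eqref{normalizedRS}, and can be computed from the explicit Langlands parameters of the generic cohomological representations listed in Section \ref{sec:TP}.

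For part (b) in the trivial-weight case, when the representations are essentially tempered the required non-vanishing is proved in \cite{Sun}; when $\K\cong\R$ and $n'=n-1$, the extension to non-tempered generic $\pi_{0_n}$ follows from \cite{Chen}. The genuinely new case, suggested by Harris, is $\K\cong\C$ with $n'=n-1$ and non-tempered representations; since each $\pi_{0_k}\in\Omega(0_k)$ is an explicit parabolic induction of unitary characters, I would adapt the Kirillov-model approach of \cite{Chen} and choose a test vector at which the Rankin-Selberg integral can be evaluated directly, or alternatively use an open-orbit argument on the Kirillov model of $\pi_{0_n}$ to produce an explicit non-vanishing cocycle on each specified isotypic component. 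The main obstacle throughout the proof is the explicit bookkeeping in part (a): carefully tracking powers of $\mathrm{i}$, signs, and contributions from $w_n$ and from complex conjugation across all three cases (and, in Case $(-)$, the additional $L$-factor comparison underlying $g_{\xi,\chi}(0)$), while simultaneously pinning down canonical top-degree cohomology generators that match up under $\jmath_{\xi,\chi}$ and the restriction map implicit in the definition of $\wp_{\xi,\chi}$.
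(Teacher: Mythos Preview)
Your high-level architecture is right: prove (a) by comparing Rankin--Selberg functionals before and after translation, then transport (b) along $\jmath_{\xi,\chi}$ from the trivial-weight case. But the mechanism you propose for (a) has a genuine gap. The identities $(v_\mu\otimes\lambda_\mu)\circ\jmath_\mu=\lambda_{0_n}$ and $\ell_{\eta,\chi}\circ\jmath_{\eta,\chi}=\ell_{\eta_0,\chi_0}$ only control the translation maps when paired against the \emph{highest-weight} vector, whereas the modular symbol pairs against the $\GL_{n'}$-invariant $\phi_{\xi,\chi}$, which is a very different element of $F_{\xi,\chi}$. There is no direct way to ``rewrite the Rankin--Selberg integrands'' using those identities alone. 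The paper's actual route is: (i) lift to principal series $I_\mu$, $I_\nu$ via the surjections $p_\mu$, $p_\nu$; (ii) invoke the explicit formula of \cite{LLSS} (Theorem~\ref{thm:LSS}), which expresses the Rankin--Selberg integral as $\Gamma_{\psi_\K}(\varrho,\varrho',\chi')^{-1}$ times an integral $\Lambda_{\chi'}$ over the open $\GL_n$-orbit through $\tilde z$; (iii) observe that, since $\phi_{\xi,\chi}(\tilde z)=1$ and the translation at the principal-series level is literally multiplication by $\phi_{\xi,\chi}$, the open-orbit integral is unchanged, so the entire constant $\Omega_{\xi,\chi}$ is a ratio of $\Gamma_{\psi_\K}$ factors and $L$-factor normalizations. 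In Case~$(\pm)$ one additionally needs the Rankin--Selberg functional equation \eqref{FE} twice, to pass through the Fourier transforms in the factorization \eqref{alpha-fac} of $\beta_{\eta,\chi}$. Your description that ``$c_{\xi,\chi}$ emerges from evaluating highest-weight vectors at $\tilde z$'' is not how the constant arises; it comes from the local epsilon factors inside $\Gamma_{\psi_\K}$.

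Your treatment of (b) also misidentifies the new case. At trivial weight every $\pi_{0_k}\in\Omega(0_k)$ is a unitary (hence tempered) principal series, so for $n'=n-1$ the non-vanishing of $\wp_{\xi_0,\chi_0}$ is already covered by \cite{Sun} in all cases, including $\K\cong\C$; there is nothing to redo there. The cases that actually need separate input at trivial weight are $n'=n$: Case~$(\pm)$ is handled by \cite{DX}, and the genuinely new ingredient the paper supplies is Case~$(-)$ (Proposition~\ref{prop:nonvanishing}), proved by reducing to a minimal $K$-type statement and quoting \cite{BR17}. Once those trivial-weight non-vanishings are in hand, (b) for general $\xi$ follows from (a) exactly as you say, since $\Omega_{\xi,\chi}\neq 0$.
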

	
	Here $\mathrm{sgn}_{\K^{\times}}$ is the sign character when $\K\cong\R$ and is understood as the trivial character when $\K\cong\C$.

	The non-vanishing of the archimedean modular symbol $\wp_{\xi_0,\chi_0}$ is proved in \cite{Sun} for Case $n'=n-1$, in \cite{DX} for Case $(\pm)$ and will be proved in Proposition \ref{prop:nonvanishing} for Case $(-)$. The general non-vanishing hypothesis in Theorem \ref{mainthm} (b) is a consequence of the period relation in (a). For period relations of $\GL_n\times\GL_{n-1}$, the proof in \cite{LLS24} for the essentially tempered representations go through \emph{mutatis mutandis} for the general situation here. The main body of this article is devoted to the proof of Theorem \ref{mainthm} (a) for $\GL_n\times\GL_n$.
	
	\section{Translations and coefficient systems}

	In this section we prove Proposition \ref{prop:translationpi} and Proposition \ref{prop:translation}, for which we construct the translation maps $\jmath_{\mu}$ in \eqref{jmu00} and $\jmath_{\eta,\chi}$ in \eqref{jetachi} respectively. We will also make some discussions on the balanced coefficient systems.

	\subsection{Translation of $\pi_{\mu}$}\label{sec:TP}

As in Section \ref{sec1.1}, let $k$ be a positive integer, $\mu\in(\Z^k)^{\mathcal{E}_{\K}}$  a dominant weight, and $F_{\mu}$  the irreducible holomorphic finite-dimensional representation of $\GL_k(\K\otimes_{\R}\C)$ of highest weight $\mu$. Take $\pi_{\mu}\in\Omega(\mu)$ to be a generic irreducible $F_{\mu}$-cohomological Casselman-Wallach representation of $\GL_k(\K)$. 
	
	For every $\iota\in\mathcal{E}_{\K}$, we set
	\begin{equation}\label{lmu}
		\widetilde{\mu}^{\iota}_i :=\mu^{\iota}_i+\frac{k+1-2i}{2},\qquad 1\leq i\leq k.
	\end{equation}
	Then the representation $\pi_{\mu}$ can be written as follows.
	\begin{itemize}
		\item When $\K\cong\R$ and $k$ is even,
		\[		\pi_{\mu}\cong\mathrm{Ind}^{\GL_k(\K)}_{\overline{\RR}_k(\K)}\left(D_{\widetilde{\mu}_1^{\iota},\widetilde{\mu}_k^{\iota}}\otimes
        \cdots\otimes D_{\widetilde{\mu}_{\frac{k}{2}}^{\iota},\widetilde{\mu}_{\frac{k}{2}+1}^{\iota}}\right).
		\]
		\item When $\K\cong\R$ and $k$ is odd,
		\[
		\pi_{\mu}\cong\mathrm{Ind}^{\GL_k(\K)}_{\overline{\RR}_k(\K)}\left(D_{\widetilde{\mu}_1^{\iota},\widetilde{\mu}_k^{\iota}}\otimes\cdots\otimes D_{\widetilde{\mu}_{\frac{k-1}{2}}^{\iota},\widetilde{\mu}_{\frac{k+3}{2}}^{\iota}}\otimes(\cdot)_{\K}^{\widetilde{\mu}^{\iota}_{\frac{k+1}{2}}}\sgn_{\K^\times}^{\frac{k-1}{2}}\varepsilon_{\pi_{\mu}}\right),
		\]
		where $\varepsilon_{\pi_{\mu}}:\K^\times \to \{\pm1\}$ is the central character of $F_{\mu}^{\vee}\otimes\pi_{\mu}$.
		
		\item When $\K\cong\C$,
		\[
		\pi_{\mu}\cong\mathrm{Ind}^{\GL_k(\K)}_{\overline{\RB}_k(\K)}\left(\iota^{\widetilde{\mu}_1^{\iota}}\overline{\iota}^{\widetilde{\mu}_k^{\overline{\iota}}}\otimes\cdots\otimes\iota^{\widetilde{\mu}_k^{\iota}}\overline{\iota}^{\widetilde{\mu}_1^{\overline{\iota}}}\right).
		\]
	\end{itemize}
	Here and henceforth,
	\begin{itemize}
		\item $\overline{\RR}_k$ is the lower triangular parabolic subgroup of type $(2,\dots,2)$ if $k$ is even and of type $(2,\dots,2,1)$ if $k$ is odd; 
		\item When $\K\cong\R$, $D_{a,b}$ is the relative discrete series of $\GL_2(\K)$ with infinitesimal character $(a,b)$, for all $a,b\in\C$ with $a-b\in\Z\backslash\{0\}$; 
		\item When $\K\cong\C$, $\iota^a\overline{\iota}^b$ is the character $z\mapsto\iota(z)^{a-b}(\iota(z)\overline{\iota}(z))^b$ of $\K^\times$ for $a,b\in\C$ with $a-b\in\Z$.
	\end{itemize}

\subsubsection{The map $\jmath_{\mu}$}	\label{sec2.1.1}

The translation map $\jmath_{\mu}$ in Proposition \ref{prop:translationpi} is constructed in \cite[Section 2]{LLS24} when $\pi_{\mu}$ is essentially tempered.
We are going to make a corrigendum to {\it loc. cit.} in the case when $\K\cong \R$ and $k$ is odd, and extend the construction to the general case.
Define the principal series representation
	\[
	I_{\mu}:=\mathrm{Ind}^{\GL_k(\K)}_{\overline{\RB}_k(\K)}(\chi_{\mu}\cdot\rho_k\cdot \varepsilon_{\pi_{\mu}}')={^{\mathrm{u}}\mathrm{Ind}}^{\GL_k(\K)}_{\overline{\RB}_k(\K)}(\chi_{\mu}\cdot \varepsilon_{\pi_{\mu}}'),
	\]
	where $\rho_k$ and $\varepsilon_{\pi_\mu}'$ are characters of $\RT_k(\K)$ given by
	\[
    \begin{aligned}
	\rho_k:& =\bigotimes_{i=1}^k|\cdot|_{\K}^{\frac{k+1}{2}-i}; \\
    \varepsilon_{\pi_\mu}':&= \underbrace{{\bf 1}_{\K^\times}\otimes\cdots\otimes {\bf 1}_{\K^\times}}_{\frac{k-1}{2} \text{ times}} \otimes 
    \sgn_{\K^\times}^{\frac{k-1}{2}} \varepsilon_{\pi_\mu} \otimes \underbrace{\sgn_{\K^\times}\otimes \cdots\otimes \sgn_{\K^\times}}_{\frac{k-1}{2} \text{ times} }
    \end{aligned}
    \]
    if $\K\cong\R$ and $k$ is odd, and $\varepsilon_{\pi_\mu}'$ is trivial otherwise. 
	 As in \cite[Lemma 2.2]{LLS24}, $\pi_{\mu}$ is the unique irreducible  quotient representation of $I_{\mu}$.

	By \cite[Theorem 15.4.1]{Wa2}, 
	$
	\dim\Hom_{\mathrm{N}_k(\K)}(I_\mu, \psi_{k,\K}) =1
	$
	and there is a unique homomorphism 
	\[
	\wt{\lambda}_{\mu} \in \Hom_{\mathrm{N}_k(\K)}(I_\mu, \psi_{k,\K})
	\] such that 
	\be \label{la'} 
	\wt{\lambda}_{\mu}(f)=  \int_{\mathrm{N}_k(\K)} f(u)\overline{\psi_{k,\K}}(u)\od\! u
	\ee
	for all $f\in I_\mu$ with $f|_{\mathrm{N}_k(\K)}\in \CS(\mathrm{N}_k(\K))$. Here and henceforth, for a Nash manifold $X$, denote by $\CS(X)$ the space of Schwartz functions on $X$
	(see \cite{AG1,Du}).

    An element $u\otimes f\in F_\mu^\vee\otimes I_\mu$ is identified with the function 
    \[
    \GL_k(\K)\to F_\mu^\vee,\quad g\mapsto f(g)\cdot u.
    \]
	Then $F_\mu^\vee\otimes I_\mu$ is identified with the space of $F_\mu^\vee$-valued smooth functions $\varphi$ on $\GL_k(\K)$ satisfying that 
	\[
	\varphi(bx)= \left(\chi_{\mu}\cdot \varepsilon_{\pi_{\mu}}'\right)(b) \cdot \varphi(x),\quad b\in \bar{\mathrm B}_k(\K),\quad x\in \GL_k(\K),
	\]
	on which $\GL_k(\K)$ acts by
	\[
	(g . \varphi)(x):=g. (\varphi(xg)),\quad g, \,  x \in \GL_k(\K).
	\]
	Define a map
	\be \label{imu}
    \begin{aligned}
	&\imath_\mu\in \Hom_{\GL_n(\K)}(I_{0_{k}},  F_\mu^\vee \otimes I_\mu), \\&\imath_\mu(f)(g):= f(g)\cdot (g^{-1}. v_\mu^\vee),  \  g\in \GL_k(\K).
	\end{aligned}
    \ee
	
	As in \cite[Lemma 2.3]{LLS24}, the map $\imath_\mu$ satisfies that 
	\be \label{imu-whit}
	(v_\mu \otimes \wt{\lambda}_{\mu} )  \circ \imath_\mu =  \wt{\lambda}_{0_{k}}.
	\ee
	There is a unique homomorphism $p_\mu\in \Hom_{\GL_k(\K)}(I_\mu, \pi_\mu)$ such that 
	\[
	\lambda_{\mu} \circ p_\mu=\wt{\lambda}_{\mu},
	\]
	and as in \cite[Lemma  2.4]{LLS24} there is a homomorphism 
    \[
    \jmath_\mu\in \Hom_{\GL_k(\K)}(\pi_{0_{k}},  F_\mu^\vee\otimes \pi_\mu)
    \]
    such that the  diagram 
	\be \label{I-pi}
	\begin{CD}
		I_{0_{k}}
		@>   p_{0_{k}} >> \pi_{0_{k}}   \\
		@ V  \imath_\mu VV        @VV  \jmath_\mu  V\\
		F_\mu^\vee\otimes I_\mu     @> {\rm id} \otimes p_\mu >>  F_\mu^\vee\otimes \pi_\mu\\
	\end{CD}
	\ee
	commutes.
	Moreover, $\jmath_\mu$ satisfies that
	\[
	(v_\mu\otimes \lambda_{\mu})\circ \jmath_\mu = \lambda_{0_{k}}.
	\]
	That is, the diagram \eqref{diagjmu} commutes. The uniqueness of $\jmath_{\mu}$ and the isomorphism \eqref{jmucohomology} follows from the translation principle (\cite[Section 7.3]{Vo81}, see also \cite[Section 5]{VZ}). This completes the proof of Proposition \ref{prop:translationpi}.

\subsubsection{Reformulation}
    
	It will be more convenient to reformulate the above results via the natural isomorphisms 
	\begin{equation} \label{dual}
		\begin{aligned}
			& \Hom_{\GL_k(\K)}(I_{0_{k}}, F_\mu^\vee\otimes I_\mu) \cong \Hom_{\GL_k(\K)}(F_\mu\otimes I_{0_{k}}, I_\mu), \\
			&  \Hom_{\GL_k(\K)}(\pi_{0_{k}}, F_\mu^\vee\otimes \pi_\mu) \cong \Hom_{\GL_k(\K)}(F_\mu\otimes \pi_{0_{k}}, \pi_\mu).
		\end{aligned}
	\end{equation} 
	Denote by $\check\imath_\mu$ and $\check\jmath_\mu$ the images of $\imath_\mu$ and $\jmath_\mu$ in the right hand sides of \eqref{dual}, respectively. Using \eqref{imu}, it is straightforward to check that the map
	\[
	\check\imath_\mu\in \Hom_{\GL_k(\K)}(F_\mu\otimes I_{0_{k}}, I_\mu)
	\]
	is given by the multiplication of functions, that is,
	\[
	\check\imath_\mu( \varphi\otimes f)(g) = \varphi(g)\cdot f(g), \quad \varphi\in F_\mu, \ f\in I_\mu, \ g\in\GL_k(\K).
	\]
	The equation \eqref{imu-whit} can be rephrased as 
	\[
	\wt{\lambda}_{\mu} \circ \check\imath_\mu\circ(v_\mu \otimes \cdot ) = \wt{\lambda}_{0_{k}} \in \Hom_{\RN_k(\K)}(I_{0_{k}}, \psi_{k,\K}).
	\]
	We also have the commutative diagram
	\be \label{I-pi-dual}
	\begin{CD}
		F_\mu\otimes I_{0_{k}}
		@>   {\rm id}\otimes p_{0_{k}} >> F_\mu\otimes \pi_{0_{k}}   \\
		@ V  \check\imath_\mu VV        @VV  \check\jmath_\mu  V\\
		I_\mu     @>  p_\mu >>   \pi_\mu\\
	\end{CD}
	\ee
	and that
	\[
	\lambda_{\mu}\circ \check\jmath_\mu\circ(v_\mu\otimes\cdot) = \lambda_{0_{k}}\in 
	\Hom_{\RN_k(\K)}(\pi_{0_{k}}, \psi_{k,\K}).
	\]

	\subsection{Theta lifting for $\GL_1\times\GL_n$}
	\label{sec:theta}
	
	For two characters $\eta,\chi:\K^{\times}\to\C^{\times}$, consider the degenerate principal series representation $I_{\eta,\chi}$ as in \eqref{Ietachi}. We discuss its relation with the theta lifting for the type II dual pair $\GL_1(\K)\times\GL_n(\K)$.

	Let $\omega_n:=\CS(\K^{1\times n})$ be the (unnormalized) Weil representation of $\GL_1(\K)\times\GL_n(\K)$ with the action given by
	\[
	(a,g).\Phi(x)=\Phi(a^{-1}xg)
	\]
    for $a\in\K^{\times},\,g\in\GL_n(\K),\,x\in\K^{1\times n}$ and $\Phi\in\CS(\K^{1\times n})$. Put $\CS_0(\K^{1\times n}):=\CS(\K^{1\times n}\backslash\{0\})\hookrightarrow\CS(\K^{1\times n})$ and $\omega_n^{\circ}:=\CS_0(\K^{1\times n})$ the restriction of $\omega_n$. Denote by
	\[
	\Theta_n(\eta\chi^n):=(\omega_n\otimes\eta^{-1}\chi^{-n})_{\K^{\times}}
    \]
    and
    \[
    \Theta^{\circ}_n(\eta\chi^n):=(\omega^{\circ}_n\otimes\eta^{-1}\chi^{-n})_{\K^{\times}}
	\]
	the maximal Hausdorff coinvariant spaces. The embedding $\CS_0(\K^n)\hookrightarrow\CS(\K^n)$ induces a natural $\GL_n(\K)$-homomorphism
	\[
	\mathrm{j}_{\eta,\chi}:\Theta_n^{\circ}(\eta\chi^n)\otimes(\chi\circ\det)\to\Theta_n(\eta\chi^n)\otimes(\chi\circ\det).
	\]

       Write 
       \be
       {\rm f}_{\eta,\chi}: \Theta_n(\eta\chi^n)\otimes(\chi\circ\det)\to I_{\eta,\chi}
       \ee
       for the homomorphism defined by evaluating the following normalized Tate integral at $s=0$:
       \[
       ({\rm f}_{\eta,\chi}(\Phi))(g):=\left. \frac{\chi(\det g)}{\oL(s,\eta\chi^n)}\int_{\K^\times}\Phi(a e_n g)\eta(a)\chi^n(a) |a|_\K^s\od\!^\times a\right|_{s=0},
       \]
       where $\Phi\in\CS(\K^{1\times n})$ and $e_n:=[0\  \cdots \ 0 \ 1]\in\K^{1\times n}$. Set
       \be \label{imf}
       \tau_{\eta,\chi}:={\rm im}({\rm f}_{\eta,\chi}) \subset I_{\eta,\chi}.
       \ee
We also have an isomorphism 
\be \label{fcirc}
{\rm f}_{\eta,\chi}^\circ: \Theta_n^\circ(\eta\chi^n)\otimes (\chi\circ\det) \xrightarrow{\cong} I_{\eta,\chi}
\ee
induced by the following absolutely convergent integral
\[
({\rm f}_{\eta,\chi}^\circ(\Phi))(g):=\chi(\det g)\cdot\int_{\K^\times}\Phi(ae_ng)\eta(a)\chi^n(a)\od\!^\times a,
\]
for $\Phi\in \CS_0(\K^{1\times n})$.

	We record the main result of \cite[Theorem 1.1]{X} on the structure of $\Theta_n(\eta\chi^n)$ and $I_{\eta,\chi}$ in the following proposition. 
	
	\begin{prp} \label{thmX}
		Assume that $n\geq 2$. If $s=0$ is not a pole of $\RL(s,\eta\chi^n)$ and $s=n$ is not a pole of $\RL(s,\eta^{-1}\chi^{-n})$, then both $\mathrm{j}_{\eta,\chi}$ and $\mathrm{f}_{\eta,\chi}$ are isomorphisms of irreducible representations. Otherwise, $\Theta_n(\eta\chi^n)$  has length $2$ with an infinite-dimensional irreducible subrepresentation and a finite-dimensional irreducible quotient. 
		\begin{itemize}
			\item[(a)] If $s=n$ is a pole of $\RL(s,\eta^{-1}\chi^{-n})$, then both $\mathrm{j}_{\eta,\chi}$ and $\mathrm{f}_{\eta,\chi}$ are isomorphisms. 
			\item[(b)] If $s=0$ is a pole of $\RL(s,\eta\chi^n)$, then 
			\[
		\mathrm{ker}(\mathrm{j}_{\eta,\chi}) \cong \mathrm{coker}({\rm j}_{\eta,\chi}) \cong {\rm im}({\rm f}_{\eta,\chi})=\tau_{\eta,\chi}.
			\]
            In this case $\tau_{\eta,\chi}$ is the  irreducible quotient of $\Theta_n(\eta\chi^n)\otimes (\chi\circ\det)$ and the irreducible subrepresentation of $I_{\eta,\chi}$.
		\end{itemize}
	\end{prp}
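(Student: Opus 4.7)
The plan is to combine the open-orbit analysis of the theta lift with a jet-filtration analysis at the origin. The starting point is the $\GL_1(\K)\times\GL_n(\K)$-equivariant short exact sequence
\[
0\to\CS_0(\K^{1\times n})\to\CS(\K^{1\times n})\to\CJ\to 0,
\]
where $\CJ$ denotes the space of infinite-order Taylor jets at $0$. I would first note that ${\rm f}^\circ_{\eta,\chi}$ is always an isomorphism: the defining integral converges absolutely on $\CS_0$, and standard Mackey analysis of the Weil representation on the free $\GL_n(\K)$-orbit $\K^{1\times n}\setminus\{0\}$ identifies the $\K^\times$-coinvariant space with $I_{\eta,\chi}$. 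This reduces the proposition to an analysis of $\mathrm{j}_{\eta,\chi}$ and $\mathrm{im}({\rm f}_{\eta,\chi})$ in terms of Hausdorff coinvariants of $\CJ$.

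Next, taking the long exact sequence of $(\K^\times,\eta^{-1}\chi^{-n})$-coinvariants of the displayed sequence gives
\[
H_1(\K^\times,\CJ)\to\Theta_n^\circ(\eta\chi^n)\xrightarrow{\mathrm{j}_{\eta,\chi}}\Theta_n(\eta\chi^n)\to H_0(\K^\times,\CJ)\to 0.
\]
I would then filter $\CJ$ by polynomial degree; each graded piece is a finite-dimensional $\K^\times$-module on which the action is a character twisted from $\eta\chi^n\cdot|\cdot|_\K^d$ (with, for $\K\cong\C$, separate holomorphic and antiholomorphic degrees). By Tate theory, $H_0$ (and, by direct computation for abelian Lie groups, also $H_1$) of the degree-$d$ piece is one-dimensional exactly when this twisted character is trivial, equivalently when $\RL(s,\eta\chi^n)$ has a pole at $s=-d$. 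At $s=0$ only $d=0$ is admissible, so each of $H_0(\K^\times,\CJ)$ and $H_1(\K^\times,\CJ)$ has dimension $1$ when $\RL(s,\eta\chi^n)$ has a pole at $s=0$ and vanishes otherwise, and the two are canonically isomorphic.

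For part~(a), $\RL(s,\eta\chi^n)$ has no pole at $s=0$, so both $H_i(\K^\times,\CJ)$ vanish and $\mathrm{j}_{\eta,\chi}$ is an isomorphism; since $\RL(0,\eta\chi^n)$ is finite and nonzero, ${\rm f}_{\eta,\chi}$ differs from ${\rm f}^\circ_{\eta,\chi}$ by a nonzero scalar and is also an isomorphism. The extra hypothesis that $\RL(s,\eta^{-1}\chi^{-n})$ be regular at $s=n$ is precisely what rules out a finite-dimensional quotient of $I_{\eta,\chi}$ in its two-step Bruhat filtration, so both sides are irreducible. For part~(b), the pole at $s=0$ produces the asserted one-dimensional kernel and cokernel of $\mathrm{j}_{\eta,\chi}$; the $L$-normalization cancels the convergent $\CS_0$-contribution, so ${\rm f}_{\eta,\chi}\circ\mathrm{j}_{\eta,\chi}=0$ and ${\rm f}_{\eta,\chi}$ factors through the cokernel $H_0(\K^\times,\CJ)$. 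The latter is realized concretely by the degree-$0$ jet $\Phi\mapsto\Phi(0)\cdot\chi(\det g)$, producing the irreducible finite-dimensional piece $\tau_{\eta,\chi}$ simultaneously as the quotient of $\Theta_n(\eta\chi^n)\otimes(\chi\circ\det)$ and as a submodule of $I_{\eta,\chi}$.

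The main technical obstacle is the passage from the infinite jet filtration on $\CJ$ to its graded pieces for both $H_0$ and $H_1$: one must verify that Hausdorff coinvariants and their derived functors commute with the filtration, which uses finite-dimensionality of each graded piece together with Casselman-Wallach smoothness and a careful inverse-limit/strictness check to ensure that the displayed coinvariant sequence is genuinely exact as stated.
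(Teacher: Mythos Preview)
The paper does not prove this proposition; it records it as \cite[Theorem 1.1]{X}. So there is no ``paper's proof'' to compare against, only the cited reference.

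Your overall strategy --- the short exact sequence $0\to\CS_0\to\CS\to\CJ\to 0$, the long exact sequence in $\K^\times$-coinvariants, and a graded analysis of the jet space $\CJ$ at the origin --- is the right one. But there is a genuine error in the dimension count. Each graded piece of $\CJ$ is not one-dimensional: the degree-$d$ jets at $0\in\K^{1\times n}$ form the $\GL_n(\K)$-module $\Sym^d((\K^{1\times n})^*)$ (and for $\K\cong\C$ the bidegree-$(d_1,d_2)$ piece is $\Sym^{d_1}\otimes\Sym^{d_2}$). The $\K^\times$-action on this piece is by the single character $a\mapsto a^{-d}$, so the twisted coinvariants are either $0$ or the \emph{entire} symmetric power, not a line. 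Consequently $H_0(\K^\times,\CJ)$ and $H_1(\K^\times,\CJ)$ are isomorphic to a symmetric-power representation of $\GL_n(\K)$, not one-dimensional, and this is exactly the finite-dimensional $\tau_{\eta,\chi}$ appearing in part (b). Your equivalence ``degree $d$ contributes iff $\RL(s,\eta\chi^n)$ has a pole at $s=-d$'' is also off: the contributing degree is the unique $d$ (or bidegree) with $\eta\chi^n(a)=a^{-d}$, and for algebraic $\eta,\chi$ in Case~$(-)$ this is $d=-(\eta_{\iota'}+n\chi_{\iota'})$, typically positive. The assertion ``at $s=0$ only $d=0$ is admissible'' and the concrete realization $\Phi\mapsto\Phi(0)\cdot\chi(\det g)$ are correct only in the special case $\eta\chi^n=\mathbf 1$.

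In short: the skeleton of your argument is sound, but you have implicitly assumed $\eta\chi^n$ is trivial. To fix part (b), identify the contributing jet degree correctly and observe that the resulting $\GL_n(\K)$-module is the full symmetric power, which then matches $\tau_{\eta,\chi}$ as described in the paper (a twist of $F_{\eta,\chi}$). Note also that the proposition does \emph{not} assert a one-dimensional kernel and cokernel; it asserts they are isomorphic to the finite-dimensional irreducible $\tau_{\eta,\chi}$.
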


When $n=1$, we understand $I_{\eta,\chi}=\eta^{-1}$ as a character which is clearly irreducible. 

    	Let $\pi_{\mu}$, $\pi_{\nu}$ be as in Section \ref{sec:1.3} and let $\eta$ be the product of their central characters. Write for short
	\[
	\omega_{n,\chi'}:=\omega_n\otimes(\chi'\circ\det).
	\]
    We have the Rankin-Selberg integral 
    \[
\RZ'_{\xi,\chi'}:\pi_{\xi}\,\widehat{\otimes}\,\omega_{n,\chi'}\to\mathfrak{M}_{n,\K}^{\ast}
    \]
defined by
\begin{equation}\label{integraltheta}
    \begin{aligned}
        &\RZ_{\xi,\chi'}'(f,f',\Phi;\mathrm{d}g)\\
            :=&\int_{\RN_n(\K)\backslash\GL_n(\K)}\left\langle\lambda_{\mu},g.f\right\rangle\left\langle\lambda'_{\nu},g.f'\right\rangle\Phi(e_ng)\chi'(\det g)\overline{\mathrm{d}}g
    \end{aligned}
\end{equation}
 for $f\in\pi_{\mu}$, $f'\in\pi_{\nu}$, $\Phi\in\CS(\K^{1\times n})$, $\chi'\in\widehat{\K^{\times}}$ and $\mathrm{d}g\in\mathfrak{M}_{n,\K}^{\ast}$. The integral converges absolutely when $\mathrm{Re}(\chi')$ is sufficiently large and has meromorphic continuation to the complex Lie group $\widehat{\K^{\times}}$. By \cite{J09}, the normalized Rankin-Selberg integral
\begin{equation}
\RZ^{\Theta}_{\xi,\chi'}(f,f',\Phi;\mathrm{d}g):=\frac{1}{\RL(0,\pi_{\mu}\times\pi_{\nu}\times\chi')}\cdot\RZ_{\xi,\chi'}'(f,f',\Phi;\mathrm{d}g)
\end{equation}
is holomorphic and induces a nonzero linear functional
\[
	\RZ_{\xi,\chi'}^{\Theta}\in\mathrm{Hom}_{\GL_n(\K)}\left(\pi_{\xi}\,\widehat{\otimes}\,\omega_{n,\chi'},\mathfrak{M}_{n,\K}^{\ast}\right).
	\]

	\subsection{Translations of $I_{\eta,\chi}$}
	\label{sec:Translation}

	Assume that $\eta,\chi$ are algebraic characters satisfying \eqref{regular} and keep the convention \eqref{iota}. Let $F_{\eta,\chi}$ be the irreducible holomorphic finite-dimensional representation of $\GL_n(\K\otimes_{\R}\C)$ whose infinitesimal character equals that of $I_{\eta,\chi}$. Recall that $F_{\eta,\chi}$ is realized as the algebraic induction in \eqref{Fmu} with a fixed highest weight vector, denoted by $v_{\eta,\chi}\in(F_{\eta,\chi})^{\RN_n(\K\otimes_{\R}\C)}$.
	
	Let $\mathcal{D}(\K^{1\times n})$ denote the algebra of polynomial coefficient differential operators on the real vector space $\K^{1\times n}$. Then $\mathcal{D}(\K^{1\times n})=\otimes_{\iota'\in\mathcal{E}_{\K}}\mathcal{D}_{n,\iota'}$ in view of $\K^{1\times n}\otimes_{\R}\C=\prod_{\iota'\in\mathcal{E}_{\K}}\C^{1\times n}_{\iota'}$, where $\mathcal{D}_{n,\iota'}$ is the algebra of polynomial coefficient differential operators on $\C_{\iota'}^{1\times n}$. For each $r\in\Z$, when $r\geq 0$, let $\mathrm{Sym}_{\iota'}^r\subset\mathcal{D}_{n,\iota'}$ denote the subspace consisting of all homogeneous polynomial functions on $\C_{\iota'}^{1\times n}$ of homogeneous degree $r$; when $r\leq 0$, let $\mathrm{Sym}_{\iota'}^r\subset\mathcal{D}_{n,\iota'}$ denote the subspace consisting of all homogeneous differential operators on $\C_{\iota'}^{1\times n}$ of homogeneous degree $-r$. For all $r\in\Z$, $\mathrm{Sym}_{\iota'}^r$ is an irreducible holomorphic finite-dimensional representation of $\GL_n(\C_{\iota'})$ of extremal weight $(r,0,\dots,0)$. 

    Let
    \[
    F'_{\eta,\chi} = \begin{cases} F_{\chi}\otimes \bigotimes\limits_{\iota'\in\CE_\K}\mathrm{Sym}_{\iota'}^{-\eta_{\iota'}-n\chi_{\iota'}}, & \text{Case $(-)$};\\
    F_{\chi\cdot\prod_{\iota'\in\CE_\K}(\iota'|_{\K^\times})^{-1}}\otimes \bigotimes\limits_{\iota'\in\CE_\K}\mathrm{Sym}_{\iota'}^{n-\eta_{\iota'}-n\chi_{\iota'}}, & \text{Case $(+)$}; \\
     F_{\chi\cdot (\bar \iota|_{\K^\times})^{-1}}\otimes(\mathrm{Sym}_\iota^{-\eta_{\iota}-n\chi_{\iota}}\otimes \mathrm{Sym}_{\bar \iota}^{n-\eta_{\bar \iota}-n\chi_{\bar \iota}}), & \text{Case $(\pm)$}.
     \end{cases}
    \]
    We fix a highest weight vector of $\mathrm{Sym}^r_{\iota'}$ to be ${x_1^{\iota'}}^r$ (resp. $(\varepsilon_{\psi_{\R}}\cdot2\pi\mathrm{i})^r\partial_{x_n^{\iota'}}^{-r}$) when $r\geq 0$ (resp. $r\leq 0$). This provides a highest weight vector $v_{\eta,\chi}'$ of $F_{\eta,\chi}'$. 
    We fix the identification
    \[
    F_{\eta,\chi}\xrightarrow{\cong} F_{\eta,\chi}',\qquad v_{\eta,\chi}\mapsto v_{\eta,\chi}',
    \]
    and denote both representations by $F_{\eta,\chi}$ in the sequel.

	We are going to construct the translation map 
	\[
	\check{\jmath}_{\eta,\chi}\in\mathrm{Hom}_{\GL_n(\K)}\left(F_{\eta,\chi}\otimes F_{\eta_0,\chi_0}^{\vee}\otimes I_{\eta_0,\chi_0}, I_{\eta,\chi}\right)
	\]
	which can be identified with $\jmath_{\eta,\chi}$ in \eqref{jetachi}. To ease the presentation, we omit the discussions for Case $(+)$, which will not be used in this article and can be treated similarly as other cases.

	\subsubsection{Action of differential operators}
	
	Write $F_{\eta,\chi}\otimes F_{\eta_0,\chi_0}^{\vee}=:F_{\chi}\otimes\mathfrak{D}_{\eta,\chi}$ with $\mathfrak{D}_{\eta,\chi}\subset\mathcal{D}(\K^{1\times n})$. Using the action of differential operators, we first define a map
	\begin{equation}
		\alpha_{\eta,\chi}:\mathfrak{D}_{\eta,\chi}\otimes\omega_n\to\omega_n,\qquad D\otimes\Phi\mapsto D\Phi
	\end{equation}
	for $D\in\mathfrak{D}_{\eta,\chi}$, $\Phi\in\CS(\K^{1\times n})$. Denote the natural projection
	\[
	\begin{aligned}
		\mathrm{p}_{\eta_0,\chi_0}:\omega_n\twoheadrightarrow\Theta_n(\eta_0\chi_0^n),\qquad\mathrm{p}_{\eta,\chi}:\omega_n\twoheadrightarrow\Theta_n(\eta\chi^n).
	\end{aligned}
	\]
	
	\begin{lemp} \label{tran-S}
		The composition 
		\[
		{\rm p}_{\eta, \chi}\circ \alpha_{\eta, \chi}: \mathfrak{D}_{\eta,\chi} \otimes \omega_n \to \Theta_n(\eta\chi^n)
		\]
		factors through the quotient $\mathrm{id}\otimes\mathrm{p}_{\eta_0,\chi_0}$, hence induces a commutative diagram 
		\[
		\CD
		\mathfrak{D}_{\eta,\chi} \otimes \omega_n  @> \alpha_{\eta, \chi}  >> \omega_n \\
		@V {\rm id}\otimes {\rm p}_{\eta_0, \chi_0} V V @ VV {\rm p}_{\eta, \chi} V \\
		\mathfrak{D}_{\eta,\chi} \otimes \Theta_n(\eta_0\chi_0^n) @>   >> \Theta_n(\eta\chi^n).
		\endCD
		\]
	\end{lemp}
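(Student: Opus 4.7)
The plan is to reduce the claimed factorization to a pointwise check on generators of $\mathcal{D}_{\eta,\chi}$ via $\GL_n(\K)$-equivariance, then carry out a direct scaling computation. First I would observe that $\alpha_{\eta,\chi}$ is $\GL_n(\K)$-equivariant (with $\GL_n(\K)$ acting on $\mathcal{D}_{\eta,\chi}\subset\mathcal{D}(\K^{1\times n})$ by conjugation), and that both $\ker(\mathrm{p}_{\eta_0,\chi_0})$ and $\ker(\mathrm{p}_{\eta,\chi})$ are $\GL_n(\K)$-stable subspaces of $\omega_n$, because $\K^\times=\GL_1$ commutes with $\GL_n$ in the dual pair. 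Since $\mathcal{D}_{\eta,\chi}$ is a finite-dimensional $\GL_n(\K)$-representation, it is generated by a finite set of vectors (for instance, the highest weight vectors of its irreducible summands), so the factorization reduces to checking $\mathrm{p}_{\eta,\chi}(D_0\Phi)=0$ for each such generator $D_0$ and each $\Phi=a.\Phi'-\eta_0\chi_0^n(a)\Phi'$ with $a\in\K^\times$ and $\Phi'\in\omega_n$.

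Next, I would identify each generator $D_0$ explicitly using the realization of $F_{\eta,\chi}$ as $F_\chi\otimes\bigotimes_{\iota'}\mathrm{Sym}_{\iota'}^{r_{\iota'}}$ (with appropriate modifications in Case $(\pm)$) given just before the lemma: $D_0$ is a tensor product of monomials $(x_1^{\iota'})^{r_{\iota'}}$ (when $r_{\iota'}\geq 0$) and constant-coefficient differential operators $\partial_{x_n^{\iota'}}^{-r_{\iota'}}$ (when $r_{\iota'}\leq 0$), up to the normalizing factors $(\varepsilon_{\psi_\R}\cdot 2\pi\mathrm{i})^{r_{\iota'}}$. Such a $D_0$ is a $\K^\times=\GL_1$-weight vector in $\mathcal{D}(\K^{1\times n})$, so homogeneity yields an explicit algebraic character $\kappa$ of $\K^\times$ with
\[
D_0(a.\Phi) \;=\; \kappa(a)^{-1}\cdot a.(D_0\Phi), \qquad a\in\K^\times,\ \Phi\in\omega_n.
\]
Applying this to $\Phi=a.\Phi'-\eta_0\chi_0^n(a)\Phi'$ gives
\[
D_0\Phi \;=\; \kappa(a)^{-1}\bigl(a.(D_0\Phi')\;-\;\kappa(a)\,\eta_0\chi_0^n(a)\cdot D_0\Phi'\bigr),
\]
which lies in $\ker(\mathrm{p}_{\eta,\chi})$ precisely when $\kappa\cdot\eta_0\chi_0^n=\eta\chi^n$.

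The main obstacle is verifying this character identity separately in Cases $(-)$, $(+)$, and $(\pm)$, since the definitions of $\eta_0$ and $\chi_0$ involve case-specific twists by $\prod_{\iota'\in\mathcal{E}_\K}(\iota'|_{\K^\times})^{-\eta_{\iota}}$ and $\prod_{\iota'\in\mathcal{E}_\K}(\iota'|_{\K^\times})^{-\chi_\iota}$, possibly modified further by $|\cdot|_\K$ (Case $(+)$) or $\bar\iota|_{\K^\times}$ (Case $(\pm)$); in Case $(\pm)$ there is the extra subtlety that $D_0$ is a mixture of polynomial multiplication in $x_1^\iota$ and a differential operator in $x_n^{\bar\iota}$, so $\kappa$ is an independent product of factors $\iota(a)^{\cdot}$ and $\bar\iota(a)^{\cdot}$ that must be matched against the case-dependent form of $\eta_0\chi_0^n$. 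This matching is essentially encoded in the tensor product decomposition $F_\chi\otimes\mathcal{D}_{\eta,\chi}=F_{\eta,\chi}\otimes F_{\eta_0,\chi_0}^\vee$ used to define $\mathcal{D}_{\eta,\chi}$. Once the identity $\kappa\cdot\eta_0\chi_0^n=\eta\chi^n$ is established for each choice of generator, $\GL_n(\K)$-equivariance propagates the vanishing to all of $\mathcal{D}_{\eta,\chi}$, and continuity extends it from the spanning vectors $a.\Phi'-\eta_0\chi_0^n(a)\Phi'$ to the whole closed subspace $\ker(\mathrm{p}_{\eta_0,\chi_0})$, yielding the desired factorization.
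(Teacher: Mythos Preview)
Your approach is correct and rests on the same core observation as the paper's proof: every element of $\mathcal{D}_{\eta,\chi}$ is homogeneous under the $\K^\times$-scaling with a fixed character $\kappa$, and this character satisfies $\kappa\cdot\eta_0\chi_0^n = \eta\chi^n$. The paper expresses this through the Mellin-integral realization of coinvariants, checking directly (for $\Phi \in \CS_0(\K^{1\times n})$, after invoking $\Theta_n\cong\Theta_n^\circ$ in Case $(\pm)$) that
\[
\int_{\K^\times} (D\Phi)(ax)\,\eta\chi^n(a)\,\mathrm{d}^\times a \;=\; D\left(\int_{\K^\times}\Phi(ax)\,\eta_0\chi_0^n(a)\,\mathrm{d}^\times a\right),
\]
which is the same homogeneity identity written in integral form; you instead work with the algebraic description of the kernel as the closure of $\mathrm{span}\{a.\Phi'-\eta_0\chi_0^n(a)\Phi'\}$, which has the minor advantage of treating all cases uniformly without passing to $\CS_0$. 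One simplification you can make: since $\mathcal{D}_{\eta,\chi}$ lies in a single tensor product $\bigotimes_{\iota'}\mathrm{Sym}_{\iota'}^{r_{\iota'}}$, \emph{every} $D\in\mathcal{D}_{\eta,\chi}$ (not only a highest weight vector) is already a $\K^\times$-weight vector with the same character $\kappa$, so the preliminary reduction to generators via $\GL_n(\K)$-equivariance is unnecessary.
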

	
	\begin{proof}
		For Case $(\pm)$, since $\Theta_n(\eta\chi^n)\cong\Theta^{\circ}_n(\eta\chi^n)$, it suffices to show that 
		if $\Phi\in \CS_0(\K^{1\times n})$ satisfies that
		\[
		\int_{\K^{\times}}\Phi(ae_ng)\eta_0(a)\chi_0^n(a)\mathrm{d}^{\times} a\equiv 0,
		\]
		then 
		\[
		\int_{\K^{\times}}D\Phi(ae_ng)\eta(a)\chi^n(a)\mathrm{d}^{\times} a\equiv 0
		\]
		for any $D\in \mathfrak{D}_{\eta,\chi}$. If we define a smooth function on $\K^n$ by
		\[
		f(x) = \int_{\K^{\times}}\Phi(ax)\eta_0(a)\chi_0(a)^n\mathrm{d}^{\times} a,\quad x\in \K^n,
		\]
		then it is easy to check that
		\[
		\int_{\K^{\times}}D\Phi(ax)\eta(a)\chi^n(a)\mathrm{d}^{\times} a = Df(x)
		\]
		for any $D\in \mathfrak{D}_{\eta,\chi}$. For Case $(-)$, $\mathfrak{D}_{\eta,\chi}$ consists of homogeneous polynomials and the lemma can be verified similarly.
	\end{proof}
	
	Twisted by $F_{\chi}$, the map $\alpha_{\eta,\chi}$ induces
	\begin{equation}
		\beta_{\eta,\chi}:F_{\eta,\chi}\otimes F_{\eta_0,\chi_0}^{\vee}\otimes\omega_{n,\chi_0}\to\omega_{n,\chi}.
	\end{equation}

	We make the normalization
\[
\beta_{\eta,\chi}^{\circ}:=C_{\eta,\chi}^{-1}\cdot\beta_{\eta,\chi}
\]
where
	\begin{equation}\label{normalizingfactor}
	C_{\eta,\chi}:=\begin{cases}
 \frac{\RL(0,\eta_0\chi_0^n)}{\RL(0,\eta\chi^n)}, & \text{Case }(-);\\
  (-\varepsilon_{\psi_{\R}}\cdot\mathrm{i})^{\eta_{\overline{\iota}}+n\chi_{\overline{\iota}}-n}, &\text{Case }(\pm).
	\end{cases}
	\end{equation}
	By Lemma \ref{tran-S}, the composition $\mathrm{p}_{\eta,\chi}\circ\beta^{\circ}_{\eta,\chi}$ factors through the quotient $\mathrm{id}\otimes\mathrm{p}_{\eta_0,\chi_0}$. We will construct the map $\check{\jmath}_{\eta,\chi}$ such that the diagram	
    \begin{equation}
		\begin{CD}\label{RIdiagram}
			F_{\eta,\chi}\otimes F_{\eta_0,\chi_0}^{\vee}\otimes\omega_{n,\chi_0} @>\beta^{\circ}_{\eta,\chi}>>\omega_{n,\chi}\\
			@V\mathrm{f}_{\eta_0,\chi_0}VV @VV\mathrm{f}_{\eta,\chi}V\\
			F_{\eta,\chi}\otimes F_{\eta_0,\chi_0}^{\vee}\otimes I_{\eta_0,\chi_0} @>\check{\jmath}_{\eta,\chi}>> I_{\eta,\chi}
		\end{CD}
	\end{equation}
commutes.

	\subsubsection{\emph{Case $(-)$}}
	
	For Case $(-)$ we can define the map
	\[
	\check{\jmath}_{\eta,\chi}:F_{\eta,\chi}\otimes F_{\eta_0,\chi_0}^{\vee}\otimes I_{\eta_0,\chi_0}\to I_{\eta,\chi}
	\]
	directly by multiplication of polynomials. Take the case $\K\cong\C$ for example (the case $\K\cong\R$ is similar), we have $\mathfrak{D}_{\eta,\chi}=\mathrm{Sym}_{\iota}^{-\eta_{\iota}-n\chi_{\iota}}\otimes\mathrm{Sym}_{\overline{\iota}}^{-\eta_{\overline{\iota}}-n\chi_{\overline{\iota}}}$. We define
	\[
	\mathfrak{D}_{\eta,\chi}\otimes I_{\eta_0,\chi_0}\to I_{\eta,\chi},\qquad (P\otimes Q)\otimes\varphi\mapsto\widetilde{\varphi},
	\]
	where 
	\[
	\widetilde{\varphi}(g):=\chi(\det g)P(e_n g^\iota)Q(e_n g^{\bar\iota})\cdot\varphi(g), \quad g\in \GL_n(\K),
	\]
    for polynomials $P\in\mathrm{Sym}_\iota^{-\eta_{\iota}-n\chi_{\iota}}$, $Q\in \mathrm{Sym}_{\overline{\iota}}^{-\eta_{\overline{\iota}}-n\chi_{\overline{\iota}}}$ and $\varphi\in I_{\eta_0,\chi_0}$.
     This induces the desired map $\check{\jmath}_{\eta,\chi}$ satisfying \eqref{RIdiagram}. The diagram \eqref{jetachidiagram} can be verified straightforwardly and the induced isomorphism \eqref{jetachicohomology} between cohomology groups can be easily seen from the fact that
    \[
    \begin{aligned}
\RH_{\mathrm{ct}}^0(\R_+^{\times}\backslash\GL_n(\K)^0;F_{\eta,\chi}^{\vee}\otimes I_{\eta,\chi})=&\RH_{\mathrm{ct}}^0(\R_+^{\times}\backslash\GL_n(\K)^0;F_{\eta,\chi}^{\vee}\otimes \tau_{\eta,\chi})\\
=&\left(F_{\eta,\chi}^{\vee}\otimes \tau_{\eta,\chi}\right)^{\RK^0_{n,\K}}.
\end{aligned}
    \]
    This proves Proposition \ref{prop:translation} for Case $(-)$.

	\subsubsection{\emph{Case $(\pm)$}}

	Since $I_{\eta,\chi}\cong\Theta_n(\eta\chi^n)\otimes(\chi\circ\det)$ in this case, the composition $\mathrm{p}_{\eta,\chi}\circ\beta^{\circ}_{\eta,\chi}$ induces the desired map
	\[
	\check{\jmath}_{\eta,\chi}:F_{\eta,\chi}\otimes F_{\eta_0,\chi_0}^{\vee}\otimes I_{\eta_0,\chi_0}\to I_{\eta,\chi}
	\]
	satisfying \eqref{RIdiagram}. The diagram \eqref{jetachidiagram} can be verified straightforwardly. To justify the induced isomorphism \eqref{jetachicohomology} between cohomology groups, recall from \cite[Theorem 6.1]{HM62} that the continuous cohomology can be calculated by the relative Lie algebra cohomology which is the cohomology of the complex
\[
\RC^i(F_{\eta,\chi}^{\vee}\otimes I_{\eta,\chi}):=\mathrm{Hom}_{\mathrm{SU}(n)}(\wedge^i\mathfrak{p}_n,F_{\eta,\chi}^{\vee}\otimes I_{\eta,\chi}),
\]
where
$
\mathfrak{p}_n:=\left(\mathfrak{gl}_{n,\K}/(\mathfrak{k}_{n,\K}\oplus\R)\right)\otimes_{\R}\C
$
and the differential
\[
d^i_{\eta,\chi}:\RC^{i}(F_{\eta,\chi}^{\vee}\otimes I_{\eta,\chi})\to\RC^{i+1}(F_{\eta,\chi}^{\vee}\otimes I_{\eta,\chi})
\]
is given in \cite[(2.127)]{Kn95}.
The translation map $\jmath_{\eta,\chi}$ induces a map
\[
\jmath_{\eta,\chi}:\RC^{i}(F_{\eta_0,\chi_0}^{\vee}\otimes I_{\eta_0,\chi_0})\to\RC^{i}(F_{\eta,\chi}^{\vee}\otimes I_{\eta,\chi}).
\]
The following lemma  completes the proof of Proposition \ref{prop:translation}.

\begin{lemp}
    $\RH_{\mathrm{ct}}^i\left(\R_+^{\times}\backslash\GL_n(\K)^0;F_{\eta,\chi}^{\vee}\otimes I_{\eta,\chi}\right)=\jmath_{\eta,\chi}\left(\RC^i(F_{\eta_0,\chi_0}^{\vee}\otimes I_{\eta_0,\chi_0})\right)$ for all $i\in\BZ$.
\end{lemp}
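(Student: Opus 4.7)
The plan is to work at the level of the relative Lie algebra cochain complex
$\RC^\bullet(V):=\Hom_{\SU(n)}(\wedge^\bullet \frak p_n, V)$ and show both (i) that the Koszul differentials vanish in each of the two complexes under consideration, and (ii) that the chain map induced by $\jmath_{\eta,\chi}$ carries $\RC^i$ of the source surjectively onto the cocycles of the target.

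First, since $\jmath_{\eta,\chi}$ is $\GL_n(\K)$-equivariant (being a morphism of Casselman–Wallach representations), it is in particular $(\g_{\C},\SU(n))$-equivariant, so it induces a chain map of cochain complexes compatible with the Koszul differentials $d^i$. Functoriality then gives an induced map of cohomology groups, and a priori $\jmath_{\eta,\chi}(\RC^i(F_{\eta_0,\chi_0}^\vee\otimes I_{\eta_0,\chi_0}))$ is contained in the cocycle space $\ker d^i_{\eta,\chi}\subset \RC^i(F_{\eta,\chi}^\vee\otimes I_{\eta,\chi})$.

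Next I would establish the vanishing of the Koszul differentials on both sides. For the source $(\eta_0,\chi_0)$ the differentials of $\chi_0$ and $\eta_0$ are adjusted to lie in a ``base'' position; up to the central twist by $\chi_0$, the representation $I_{\eta_0,\chi_0}$ is (via the explicit identification with $\Theta_n(\eta_0\chi_0^n)\otimes(\chi_0\circ\det)$ in Case $(\pm)$ and the realization of $F_{\eta_0,\chi_0}^\vee$ through $\mathcal{D}(\K^{1\times n})$) a cohomologically induced module whose cochain complex is the standard Vogan–Zuckerman rigid complex, so all its Koszul differentials vanish. An analogous analysis (matching $\SU(n)$-types between $F_{\eta,\chi}^\vee$ and $I_{\eta,\chi}$) gives the vanishing for the target complex as well. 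Consequently, $\RH^i_{\mathrm{ct}}=\RC^i$ as subspaces of the cochain complex, and the inclusion $\jmath_{\eta,\chi}(\RC^i)\subset \RH^i_{\mathrm{ct}}$ becomes an equality statement about a map of finite-dimensional $\SU(n)$-invariant spaces.

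Finally I would invoke the translation principle of Vogan–Zuckerman: the map $\jmath_{\eta,\chi}$ is (by construction through the commutative diagram \eqref{RIdiagram}, tensoring with $F_{\eta,\chi}\otimes F_{\eta_0,\chi_0}^\vee$) the translation functor moving from the singular infinitesimal character of $I_{\eta_0,\chi_0}$ to the regular one of $I_{\eta,\chi}$. Translation functors are exact equivalences between the relevant blocks of the $(\g_{\C},K)$-module category and induce isomorphisms on relative Lie algebra cohomology in every degree. Combined with the vanishing in the previous step, this forces $\jmath_{\eta,\chi}$ to be a surjection at the cochain level onto $\RH^i_{\mathrm{ct}}(F_{\eta,\chi}^\vee\otimes I_{\eta,\chi})$, yielding the desired equality.

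The main obstacle is step two, the vanishing of the Koszul differentials: while expected from the cohomological induction picture, checking it rigorously requires identifying the $\SU(n)$-type decompositions of $F_{\eta,\chi}^\vee\otimes I_{\eta,\chi}$ and verifying that no pair of adjacent cochain spaces shares a common $K$-type with compatible extremal weight. The cleanest route is probably to reduce to the already understood cohomology computation of $\tau_{\eta,\chi}\hookrightarrow I_{\eta,\chi}$ using Proposition \ref{thmX}(b) and the fact that $\tau_{\eta,\chi}$ and $I_{\eta,\chi}$ share the same continuous cohomology, so the vanishing for the irreducible piece propagates to the whole principal series.
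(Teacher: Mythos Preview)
Your overall structure has the right ingredients---the chain map, the vanishing of the differential on the source, and translation---but step two contains a genuine error. You claim that the Koszul differential vanishes on the \emph{target} complex $\RC^\bullet(F_{\eta,\chi}^\vee\otimes I_{\eta,\chi})$, and hence that $\RH^i_{\rm ct}=\RC^i$ there as well. This is false in general: $I_{\eta,\chi}$ is not unitary for generic $(\eta,\chi)$, so Wallach's vanishing criterion does not apply, and indeed a dimension count shows the differential cannot vanish. Since $F_{\eta,\chi}$ is strictly larger than $F_{\eta_0,\chi_0}$, the space $\RC^i(F_{\eta,\chi}^\vee\otimes I_{\eta,\chi})$ is strictly larger than $\RC^i(F_{\eta_0,\chi_0}^\vee\otimes I_{\eta_0,\chi_0})$, while the cohomologies have the same dimension by translation; hence $d_{\eta,\chi}$ must be nonzero on the non-trivial infinitesimal-character summands. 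Your suggested fallback via $\tau_{\eta,\chi}$ and Proposition~\ref{thmX}(b) also fails here: this lemma is stated for Case~$(\pm)$, where $I_{\eta,\chi}$ is already irreducible, so $\tau_{\eta,\chi}=I_{\eta,\chi}$ and there is nothing to reduce to.

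The paper's argument avoids this pitfall by never claiming that $d_{\eta,\chi}$ vanishes globally. Instead it uses unitarity of $I_{\eta_0,\chi_0}|_{\SL_n(\K)}$ (via \cite[Proposition 9.4.3]{Wa88}) to get $d_{\eta_0,\chi_0}=0$ on the source, so the image $\jmath_{\eta,\chi}(\RC^i)$ lands in $\ker d^i_{\eta,\chi}$. The key additional step you are missing is an \emph{infinitesimal-character} argument: by \cite[Proposition 7.4.1]{Vo81}, $\jmath_{\eta,\chi}(F_{\eta_0,\chi_0}^\vee\otimes I_{\eta_0,\chi_0})$ is exactly the summand of $F_{\eta,\chi}^\vee\otimes I_{\eta,\chi}$ with trivial generalized infinitesimal character. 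Since the Koszul differential commutes with the center of $\RU(\g)$, it preserves this summand, and one deduces that $\jmath_{\eta,\chi}(\RC^i)\cap\operatorname{Im}(d^{i-1}_{\eta,\chi})=\{0\}$. Together with the standard fact that only the trivial infinitesimal character contributes to Lie algebra cohomology, this gives the equality. You should replace your global $d=0$ claim on the target with this infinitesimal-character decomposition.
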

    
\begin{proof}
We have the following commutative diagram
\[
\begin{CD}
\RC^{i}(F_{\eta_0,\chi_0}^{\vee}\otimes I_{\eta_0,\chi_0}) @> d_{\eta_0,\chi_0}^{i}>> \RC^{i+1}(F_{\eta_0,\chi_0}^{\vee}\otimes I_{\eta_0,\chi_0}) \\
@V\jmath_{\eta,\chi}VV @V\jmath_{\eta,\chi}VV \\
\RC^{i}(F_{\eta,\chi}^{\vee}\otimes I_{\eta,\chi}) @>d_{\eta,\chi}^{i}>>\RC^{i+1}(F_{\eta,\chi}^{\vee}\otimes I_{\eta,\chi}).
\end{CD}
\]
Since $I_{\eta_0,\chi_0}$ is unitary when restricted to $\SL_n(\K)$, by \cite[Proposition 9.4.3]{Wa88} we have $d_{\eta_0,\chi_0}^i=0$ and
    \[
\RH_{\mathrm{ct}}^i\left(\R_+^{\times}\backslash\GL_n(\K)^0;F_{\eta_0,\chi_0}^{\vee}\otimes I_{\eta_0,\chi_0}\right)=\RC^i(F_{\eta_0,\chi_0}^{\vee}\otimes I_{\eta_0,\chi_0}).
    \]
It follows that 
    \[
(d^i_{\eta,\chi}\circ\jmath_{\eta,\chi})\left(\RC^{i}(F_{\eta_0,\chi_0}^{\vee}\otimes I_{\eta_0,\chi_0})\right)=(\jmath_{\eta,\chi}\circ d^i_{\eta_0,\chi_0})\left(\RC^{i}(F_{\eta_0,\chi_0}^{\vee}\otimes I_{\eta_0,\chi_0})\right)=\{0\}.
\]
Note that the subspace of $F_{\eta,\chi}^{\vee}\otimes I_{\eta,\chi}$ which has the same generalized infinitesimal character as that of the trivial representation is exactly $\jmath_{\eta,\chi}(F_{\eta_0,\chi_0}^{\vee}\otimes I_{\eta_0,\chi_0})$ by \cite[Proposition 7.4.1]{Vo81}. Therefore
\[
(d_{\eta,\chi}^{i-1})^{-1}\left(\jmath_{\eta,\chi}\left(\RC^{i}(F_{\eta_0,\chi_0}^{\vee}\otimes I_{\eta_0,\chi_0}\right)\right) \subset\jmath_{\eta,\chi}\left(\RC^{i-1}(F_{\eta_0,\chi_0}^{\vee}\otimes I_{\eta_0,\chi_0})\right) 
 \subset {\rm Ker}(d^{i-1}_{\eta,\chi}),
\]
which implies that 
\[
\jmath_{\eta,\chi}\left(\RC^{i}(F_{\eta_0,\chi_0}^{\vee}\otimes I_{\eta_0,\chi_0})\right) \cap {\rm Im}(d_{\eta,\chi}^{i-1})=\{0\}.
\]
This proves that
\[
\RH_{\mathrm{ct}}^{i}\left(\R_+^{\times}\backslash\GL_n(\K)^0;F_{\eta,\chi}^{\vee}\otimes I_{\eta,\chi}\right)=\jmath_{\eta,\chi}\left(\RC^{i}(F_{\eta_0,\chi_0}^{\vee}\otimes I_{\eta_0,\chi_0})\right)
\]
as desired.
\end{proof}

    \subsubsection{Fourier transform}
    
    For later computations, we rewrite $\beta_{\eta,\chi}$ in terms of the Fourier transform. To ease the notation, we set
	\[
	a_{\eta,\chi,\iota}:=-\eta_{\iota}-n\chi_{\iota},\qquad a_{\eta,\chi,\overline{\iota}}:=\eta_{\overline{\iota}}+n\chi_{\overline{\iota}}-n.
	\]
	
	We have the linear dual $\K^{n\times 1}$ of $\K^{1\times n}$ and we define $\check{\omega}_n:=\CS(\K^{n\times 1})$ to be a representation of $\GL_1(\K)\times\GL_n(\K)$ with the action given by
	\[
	(a,g).\Phi'(x)=|a|_{\K}^{n}\cdot|\det(g)|_{\K}^{-1}\cdot\Phi'(ag^{-1}y)
	\]
	for $a\in\K^{\times}$, $g\in\GL_n(\K)$, $y\in\K^{n\times 1}$, $\Phi'\in\CS(\K^{n\times 1})$. For a character $\chi$, we also write for short \[
    \check{\omega}_{n,\chi}:=\check{\omega}_n\otimes(\chi\circ\det).
    \]
	
	\begin{lemp} \label{lem:FS}
		The Fourier transform with respect to $\psi_\K$ defines a $\GL_n(\K)$-isomorphism
		\[
		\begin{aligned}
			\CF_{\psi_\K}:  \omega_n&\to \check{\omega}_n,\\
			\Phi&\mapsto \left(\wh{\Phi}(y): = \CF_{\psi_\K}(\Phi)(y):=\int_{\K^n}\Phi(x) \psi_\K(- x y) \od\! x\right).
		\end{aligned}
		\]
	\end{lemp}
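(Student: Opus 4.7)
\medskip

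The plan is to verify this by direct computation, since the statement reduces to the classical fact that Fourier transform is a topological automorphism of the Schwartz space, combined with an equivariance check by change of variables. First I would recall that, with respect to the self-dual Haar measure $\od\! x$ on $\K^n$ determined by $\psi_\K$ (product of the self-dual measures on each coordinate), the map
\[
\CF_{\psi_\K}: \CS(\K^{1\times n}) \to \CS(\K^{n\times 1}), \qquad \Phi \mapsto \wh{\Phi},
\]
is a topological linear isomorphism; this is standard and follows from Fourier inversion $\CF_{\psi_\K}^{-1}(\Psi)(x) = \int_{\K^n} \Psi(y)\, \psi_\K(xy)\, \od\! y$ using the self-duality of $\od\! x$ with respect to $\psi_\K$.

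Next I would verify $\GL_n(\K)$-equivariance. For $g\in \GL_n(\K)$ and $\Phi\in \omega_n$, the $\GL_n(\K)$-action on $\omega_n$ (with $a=1$) gives $(g.\Phi)(x) = \Phi(xg)$, so
\[
\CF_{\psi_\K}(g.\Phi)(y) = \int_{\K^n} \Phi(xg)\, \psi_\K(-xy)\, \od\! x.
\]
Substituting $x' := xg$, so that $\od\! x = \abs{\det g}_\K^{-1}\od\! x'$ and $xy = x'g^{-1}y$, this equals
\[
\abs{\det g}_\K^{-1} \int_{\K^n} \Phi(x')\, \psi_\K(-x'\cdot g^{-1}y)\, \od\! x' = \abs{\det g}_\K^{-1}\cdot \wh{\Phi}(g^{-1}y),
\]
which is precisely $(g.\wh{\Phi})(y)$ under the action on $\check{\omega}_n$ (with $a=1$). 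Hence $\CF_{\psi_\K}\circ g = g\circ \CF_{\psi_\K}$. An analogous computation, substituting $x' := a^{-1}x$ (for which $\od\! x = \abs{a}_\K^n \od\! x'$), shows the compatibility with the $\GL_1(\K)$-action as well, so $\CF_{\psi_\K}$ is in fact equivariant for the full $\GL_1(\K)\times \GL_n(\K)$-action.

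There is no real obstacle here, as the lemma is essentially a bookkeeping statement that records how the normalizations in the definitions of $\omega_n$ and $\check{\omega}_n$ have been chosen to be precisely what makes the Fourier transform intertwine the two actions. The only point that requires attention is making sure the Haar measure normalization (self-dual with respect to $\psi_\K$, as fixed in Section \ref{sec:1.3}) matches the normalization used in the definition of $\CF_{\psi_\K}$; once this is set, Fourier inversion supplies the two-sided inverse and the change-of-variable calculations above supply the equivariance.
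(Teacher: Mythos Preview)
Your proof is correct. The paper does not give a proof of this lemma at all; it simply states the result and moves on, treating it as a standard fact. Your direct verification via change of variables is exactly the routine check one would supply if asked to fill in the details, and it matches the paper's normalizations.
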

	
	Recall that 
	\[
	\Cd(\K^{1\times n})=\Cd(\C_{\iota}^{1\times n})\otimes\Cd(\C_{\overline{\iota}}^{1\times n})=\C\langle x_i^{\iota},\partial_{x_i^{\iota}}\rangle\otimes\C\langle x_i^{\overline{\iota}},\partial_{x_i^{\overline{\iota}}}\rangle
	\]
	is the algebra of all polynomial coefficient differential operators on $\K^{1\times n}\otimes_{\R}\C\cong\C_{\iota}^{1\times n}\times\C_{\overline{\iota}}^{1\times n}$, where we write $x_1^{\iota},\dots,x_n^{\iota}$ (resp. $x_1^{\overline{\iota}},\dots,x_n^{\overline{\iota}}$) for the coordinates of $\C_{\iota}^{1\times n}$ (resp. $\C_{\overline{\iota}}^{1\times n}$). In a similar vein, we have the algebra
	\[
	\Cd(\K^{n\times 1}) \cong  \BC \left\langle y^\iota_i, \partial_{y^\iota_i} \right\rangle \otimes_\BC \BC \left\langle y^{\bar\iota}_i, \partial_{y^{\bar\iota}_i}\right\rangle, 
	\]
	of all polynomial coefficient differential operators on $\K^{n\times 1}\otimes_\BR\BC\cong\C_{\iota}^{n\times 1}\times \C_{\overline{\iota}}^{n\times 1}$ which acts on $\CS(\K^{n\times 1})$. Here the coordinates of $\C_{\iota}^{n\times 1}$ (resp. $\C_{\overline{\iota}}^{n\times 1}$) are denoted by $y_1^{\iota},\dots,y_n^{\iota}$ (resp. $y_1^{\overline{\iota}},\dots,y_n^{\overline{\iota}}$). We have a $\GL_n(\K\otimes_\BR\BC)$-isomorphism 
	\be \label{FTD}
	\CF_{\psi_\K}: \Cd(\K^{1\times n}) \to \Cd(\K^{n\times 1})
	\ee
	induced by 
	\begin{equation}\label{fourier}
		x^{\iota'}_i \mapsto -\frac{\partial_{y^{\iota'}_i}}{ \varepsilon_{\psi_{\R}}\cdot 2\pi {\rm i}},\quad  \partial_{x^{\iota'}_i} \mapsto  \varepsilon_{\psi_\R}\cdot 2\pi {\rm i}\cdot y^{\iota'}_i, \quad \iota'\in\CE_\K, \ i=1, 2,\ldots, n.
	\end{equation}
	It is straightforward to verify the following result. 
	
	\begin{lemp} \label{lem:F}
		For $D\in \Cd(\K^{1\times n}\otimes_\BR\BC)$ and $\Phi\in \CS(\K^{1\times n})$, it holds that
		\[
		\CF_{\psi_\K}(D\,\Phi) = \CF_{\psi_\K}(D) \CF_{\psi_\K}(\Phi)\in \CS(\K^{n\times 1}).
		\]
	\end{lemp}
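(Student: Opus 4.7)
The plan is to reduce to the generators of the Weyl algebra $\Cd(\K^{1\times n}\otimes_{\R}\C)$ and check the intertwining property by direct computation, using standard integration-by-parts and differentiation-under-the-integral-sign arguments that are valid on Schwartz functions.

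More precisely, I would first argue by induction on the total degree of $D$ as a noncommutative polynomial in the generators $x_i^{\iota'}$ and $\partial_{x_i^{\iota'}}$ ($\iota'\in \CE_\K$, $1\le i\le n$). The inductive step is purely formal: if the identity $\CF_{\psi_\K}(D_j \Phi)=\CF_{\psi_\K}(D_j)\CF_{\psi_\K}(\Phi)$ holds for $j=1,2$ and all Schwartz $\Phi$, then
\[
\CF_{\psi_\K}(D_1 D_2\Phi)=\CF_{\psi_\K}(D_1)\CF_{\psi_\K}(D_2\Phi)=\CF_{\psi_\K}(D_1)\CF_{\psi_\K}(D_2)\CF_{\psi_\K}(\Phi),
\]
and this equals $\CF_{\psi_\K}(D_1 D_2)\CF_{\psi_\K}(\Phi)$ because \eqref{FTD}, being defined by \eqref{fourier}, is an algebra homomorphism (one checks that the images of $x_i^{\iota'}$ and $\partial_{x_j^{\iota''}}$ satisfy the same canonical commutation relations as the originals: they all commute except $[\partial_{x_i^{\iota'}},x_i^{\iota'}]=1$ goes to $[\varepsilon_{\psi_\R}\cdot 2\pi\ii\cdot y_i^{\iota'}, -\partial_{y_i^{\iota'}}/(\varepsilon_{\psi_\R}\cdot 2\pi\ii)]=1$).

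For the base cases, I would compute directly. Starting from $\CF_{\psi_\K}(\Phi)(y)=\int_{\K^n}\Phi(x)\psi_\K(-xy)\,\od\! x$, the key observation is that along the $i$-th real coordinate of $\K$ in the embedding $\K\hookrightarrow\K\otimes_\R\C=\prod_{\iota'}\C_{\iota'}$, the character satisfies $\partial_{x_i^{\iota'}}\psi_\K(-xy)=-\varepsilon_{\psi_\R}\cdot 2\pi\ii\cdot y_i^{\iota'}\psi_\K(-xy)$ and dually $\partial_{y_i^{\iota'}}\psi_\K(-xy)=-\varepsilon_{\psi_\R}\cdot 2\pi\ii\cdot x_i^{\iota'}\psi_\K(-xy)$. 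Integrating the first by parts gives
\[
\CF_{\psi_\K}(\partial_{x_i^{\iota'}}\Phi)(y)=\varepsilon_{\psi_\R}\cdot 2\pi\ii\cdot y_i^{\iota'}\cdot\CF_{\psi_\K}(\Phi)(y),
\]
matching \eqref{fourier} applied to $\partial_{x_i^{\iota'}}$. Differentiating under the integral in the second gives
\[
\partial_{y_i^{\iota'}}\CF_{\psi_\K}(\Phi)(y)=-\varepsilon_{\psi_\R}\cdot 2\pi\ii\cdot\CF_{\psi_\K}(x_i^{\iota'}\Phi)(y),
\]
equivalently $\CF_{\psi_\K}(x_i^{\iota'}\Phi)=-(\varepsilon_{\psi_\R}\cdot 2\pi\ii)^{-1}\partial_{y_i^{\iota'}}\CF_{\psi_\K}(\Phi)$, which again matches \eqref{fourier}.

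The one point that deserves slight care, rather than being a serious obstacle, is the case $\K\cong\C$: here $x_i^\iota,x_i^{\bar\iota}$ are the holomorphic/antiholomorphic coordinates (complex conjugates on the real form $\K^{1\times n}$), and the operators $\partial_{x_i^{\iota'}}$ are the Wirtinger derivatives. Writing $\psi_\K(z)=\exp(2\pi\ii\,\varepsilon_{\psi_\R}(z+\bar z))$ one sees that the Wirtinger derivatives of $\psi_\K(-xy)$ in $x_i^\iota$ and $x_i^{\bar\iota}$ decouple, producing independently the factors $-\varepsilon_{\psi_\R}\cdot 2\pi\ii\cdot y_i^\iota$ and $-\varepsilon_{\psi_\R}\cdot 2\pi\ii\cdot y_i^{\bar\iota}$, so the same integration-by-parts argument goes through for each $\iota'\in\CE_\K$ separately. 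Together with the inductive step this completes the proof. Absolute convergence of all integrals involved, as well as the fact that $\CF_{\psi_\K}$ preserves the Schwartz space, is classical and provides no obstacle.
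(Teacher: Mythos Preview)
Your proof is correct and is exactly the straightforward verification the paper has in mind; the paper itself omits the argument entirely, stating only that ``It is straightforward to verify the following result.'' Your reduction to the generators via the algebra-homomorphism property of \eqref{FTD}, followed by integration by parts and differentiation under the integral sign for the base cases (with the Wirtinger-derivative remark for $\K\cong\C$), is the intended computation.
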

	
	Recall that $\Sym_{\overline{\iota}}^{-a_{\eta,\chi, \bar\iota}}$ is the space of homogeneous differential operators on $\C_{\overline{\iota}}^{1\times n}$ of degree $a_{\eta,\chi,\overline{\iota}}$. Denote by $\Sym_{\overline{\iota}}^{*, a_{\eta,\chi, \bar\iota}}$  the space of homogeneous polynomials in $y^{\bar\iota}_1,\ldots, y^{\bar\iota}_n$ of degree $a_{\eta,\chi,\bar\iota}$.
	The map  \eqref{FTD} restricts to a $\GL_n(\BC)$-isomorphism  
	\be \label{partial-F} 
	\CF_{\psi_\K}: \Sym_{\overline{\iota}}^{-a_{\eta,\chi,\bar\iota}} \to \Sym_{\overline{\iota}}^{*, a_{\eta,\chi,\bar\iota}}.
	\ee
	
	We  factorize 
	\[
	\begin{aligned}
		F_{\eta, \chi}\otimes F_{\eta_0, \chi_0}^\vee & = F_{\chi}\otimes\mathfrak{D}_{\eta,\chi}\\
        & = ({\det}_{\iota}^{\chi_{\iota}} \otimes \Sym_{\iota}^{a_{\eta,\chi, \iota}})\otimes ({\det}_{\overline{\iota}}^{\chi_{\overline{\iota}}}\otimes \Sym_{\overline{\iota}}^{-a_{\eta,\chi,\bar\iota}}) \\
        &= : \Cd_{\eta,\chi,\iota} \otimes \Cd_{\eta,\chi, \bar\iota},
	\end{aligned}
	\]
	and put
	\[
	\Cd_{\eta,\chi, \bar\iota}^* := {\det}_{\overline{\iota}}^{\chi_{\overline{\iota}}}\otimes  \Sym_{\overline{\iota}}^{*, a_{\eta,\chi,\bar\iota}}.
	\]
	Using Lemma \ref{lem:F} and Fourier inversion, we obtain the factorization of $\beta_{\eta,\chi}$ as
	\begin{eqnarray}
		\beta_{\eta,\chi}:	\label{alpha-fac} F_{\eta, \chi}\otimes F_{\eta_0, \chi_0}^\vee \otimes \omega_n & =  &  (\Cd_{\eta,\chi, \iota} \otimes \Cd_{\eta,\chi, \bar\iota})\otimes \omega_n \\
		\nonumber & \longrightarrow  & \Cd_{\eta,\chi,\bar\iota} \otimes \omega_n \otimes ({\det}_{\iota}^{\chi_{\iota}}\otimes 1) \\
		\nonumber & \xrightarrow{\CF_{\psi_\K}}  & \Cd_{\eta,\chi, \bar\iota}^* \otimes \check{\omega}_n\otimes ({\det}_{\iota}^{\chi_{\iota}}\otimes 1)\\
		\nonumber & \longrightarrow &  \check{\omega}_{n,\chi} \\ 
		\nonumber  & \xrightarrow{\CF_{\overline{\psi_\K}}} & \omega_{n,\chi},
	\end{eqnarray}
	where the first and third arrows are multiplications by polynomials on $\CS(\K^{1\times n})$ and $\CS(\K^{n\times 1})$ respectively.

	\subsection{The balanced coefficient systems}
	\label{sec:balanced}
	
	Recall that $n'=n$ or $n'=n-1$. Take $\pi_{\mu}$ and $\pi_{\nu}$ as in the Introduction for dominant weights $\mu\in(\Z^n)^{\mathcal{E}_{\K}}$ and $\nu\in(\Z^{n'})^{\mathcal{E}_{\K}}$. Let $\eta$ be the product of central characters of $\pi_{\mu}$ and $\pi_{\nu}$. Recall  Definition \ref{def:balance} for an algebraic character $\chi$ of $\K^{\times}$ being $\xi$-balanced.
	
	The conditions for $\chi$ being $\xi$-balanced are given in the following lemma. When $n'=n-1$, this is a consequence of the classical branching law \cite[Theorem 8.1.1]{GTM255}. See also the discussions in \cite[Section 2.4.5.2]{Rag2}. When $n'=n$, this is a consequence of the Pieri's rule \cite[Corollary 9.2.4]{GTM255}. See also the discussions in \cite[Proposition 3.3]{DX}.

	\begin{lemp} \label{lem-balanced}
		An algebraic character $\chi$ of $\K^{\times}$ is $\xi$-balanced if and only if
		\begin{itemize}
			\item \emph{(Case $n'=n-1$)} For every $\iota'\in\mathcal{E}_{\K}$,
			\[
				\max_{2\leq i\leq n}\{\mu_i^{\iota'}+\nu_{n+1-i}^{\iota'}\}+\chi_{\iota'}\leq 0\leq	\min_{1\leq i\leq n-1}\{\mu_i^{\iota'}+\nu_{n-i}^{\iota'}\}+\chi_{\iota'}.
			\]
			\item \emph{(Case ($-$))} For every $\iota'\in\mathcal{E}_{\K}$,
			\[
			\max_{1\leq i\leq n}\{\mu_i^{\iota'}+\nu_{n+1-i}^{\iota'}\}+\chi_{\iota'}\leq 0\leq \min_{1\leq i\leq n-1}\{\mu_i^{\iota'}+\nu_{n-i}^{\iota'}\}+\chi_{\iota'}.
			\]
			\item \emph{(Case ($+$))} For every $\iota'\in\mathcal{E}_{\K}$
			\[
			\max_{2\leq i\leq n}\{\mu_i^{\iota'}+\nu_{n+2-i}^{\iota'}\}+\chi_{\iota'}\leq 1\leq \min_{1\leq i\leq n}\{\mu_i^{\iota'}+\nu_{n+1-i}^{\iota'}\}+\chi_{\iota'}.
			\]
			\item \emph{(Case ($\pm$))}
			\[
			\begin{aligned}
				& \max_{1\leq i\leq n}\{\mu_i^{\iota}+\nu_{n+1-i}^{\iota}\}+\chi_{\iota}\leq 0\leq \min_{1\leq i\leq n-1}\{\mu_i^{\iota}+\nu_{n-i}^{\iota}\}+\chi_{\iota},\\
				\emph{and}\quad & \max_{2\leq i\leq n}\{\mu_i^{\overline{\iota}}+\nu_{n+2-i}^{\overline{\iota}}\}+\chi_{\overline{\iota}}\leq 1\leq \min_{1\leq i\leq n}\{\mu_i^{\overline{\iota}}+\nu_{n+1-i}^{\overline{\iota}}\}+\chi_{\overline{\iota}}.
			\end{aligned}
			\]
		\end{itemize}
		In all above cases, we have
		\[
		\begin{aligned}
			&\dim \mathrm{Hom}_{\GL_{n'}(\K\otimes_{\R}\C)}\left(F_{\mu}^{\vee}\otimes F_{\nu}^{\vee}\otimes F_{\chi}^{\vee},\C\right)=1,&\quad&\text{when }n'=n-1,\\
			&\dim \mathrm{Hom}_{\GL_{n'}(\K\otimes_{\R}\C)}\left(F_{\mu}^{\vee}\otimes F_{\nu}^{\vee}\otimes F_{\eta,\chi}^{\vee},\C\right)=1,&\quad&\text{when }n'=n.
		\end{aligned}
		\]
	\end{lemp}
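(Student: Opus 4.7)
Since $\GL_{n'}(\K\otimes_\R\C)=\prod_{\iota'\in\CE_\K}\GL_{n'}(\C_{\iota'})$ and each of the representations $F_\mu$, $F_\nu$, $F_\chi$, $F_{\eta,\chi}$ factors accordingly, the Hom space in question factors as a tensor product of analogous Hom spaces, one for each $\iota'\in\CE_\K$. So I first reduce to proving the corresponding statement over a single copy of $\GL_{n'}(\C)$, with $\mu$, $\nu$, $\chi$, $\eta$ replaced by their $\iota'$-components, and then assemble the pieces.

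For Case $n'=n-1$, the adjunction
\[
\mathrm{Hom}_{\GL_{n-1}(\C)}(F_\mu^\vee\otimes F_\nu^\vee\otimes F_\chi^\vee,\C)\,\cong\,\mathrm{Hom}_{\GL_{n-1}(\C)}(F_\mu^\vee|_{\GL_{n-1}(\C)},\,F_\nu\otimes F_\chi)
\]
reduces the problem to the classical $\GL_n\!\downarrow\!\GL_{n-1}$ branching rule \cite[Theorem 8.1.1]{GTM255}. Multiplicity-freeness of this branching gives at most one dimension, and non-vanishing holds precisely when the weight $(\nu_1+\chi,\ldots,\nu_{n-1}+\chi)$ interlaces the highest weight $(-\mu_n,\ldots,-\mu_1)$ of $F_\mu^\vee$, i.e.\ $-\mu_{n-i+1}\geq \nu_i+\chi\geq -\mu_{n-i}$ for $i=1,\ldots,n-1$. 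The substitutions $j=n+1-i$ and $j=n-i$ transform these into exactly the asserted bounds on $\mu_j+\nu_{n+1-j}+\chi$ and $\mu_j+\nu_{n-j}+\chi$.

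For Case $n'=n$, I use the explicit descriptions in Section~\ref{sec:Translation}: at each embedding, $F_{\eta,\chi}$ is, up to a determinant twist, either $\Sym^r V$ with $r=-\eta_{\iota'}-n\chi_{\iota'}\geq 0$ (Case $(-)$ at $\iota'$) or $\Sym^s(V^\vee)$ with $s=\eta_{\iota'}+n\chi_{\iota'}-n\geq 0$ (Case $(+)$ at $\iota'$); Case $(\pm)$ is the mixed situation with Case $(-)$ at $\iota$ and Case $(+)$ at $\bar\iota$. Rewriting
\[
\mathrm{Hom}(F_\mu^\vee\otimes F_\nu^\vee\otimes F_{\eta,\chi}^\vee,\C)\,\cong\,\mathrm{Hom}(F_{-w_0\mu},\,F_\nu\otimes F_{\eta,\chi}),
\]
and applying Pieri's rule \cite[Corollary 9.2.4]{GTM255}, the tensor product $F_\nu\otimes F_{\eta,\chi}$ decomposes as a multiplicity-free sum of $F_\sigma$ indexed by a horizontal $r$-strip added to (Case $(-)$) or a horizontal $s$-strip removed from (Case $(+)$) the determinant-shifted weight of $F_\nu$. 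Setting $\sigma_i=-\mu_{n+1-i}$ and translating the horizontal-strip inequalities $\sigma_i\geq\lambda_i\geq\sigma_{i+1}$ (resp.\ $\lambda_i\geq\sigma_i\geq\lambda_{i+1}$), where $\lambda$ is $\nu$ plus the determinant twist, produces precisely the stated inequalities; the size condition $\sum|\sigma_i-\nu_i|=r$ or $s$ is automatic because the differential of $\eta$ equals that of the product of the central characters of $\pi_\mu$ and $\pi_\nu$.

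The main obstacle is combinatorial bookkeeping: correctly tracking the long Weyl element $-w_0$ each time one dualizes, and keeping straight the three determinant twists ($\chi$, $\chi-1$, and $\chi\cdot(\bar\iota|_{\K^\times})^{-1}$) that distinguish the cases. Once this is done carefully, the horizontal-strip inequalities rearrange directly into those of the lemma; in particular, the index shift between $\nu_{n+1-i}$ (Case $(-)$) and $\nu_{n+2-i}$ (Case $(+)$) reflects exactly the difference between adding a horizontal strip to $\Sym^r V$ and removing one for $\Sym^s(V^\vee)$, while the bound changing from $0$ to $1$ records the $\chi\mapsto\chi-1$ shift. The one-dimensionality in every case is immediate from the multiplicity-freeness of both Pieri's rule and $\GL_n\!\downarrow\!\GL_{n-1}$ branching.
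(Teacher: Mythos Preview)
Your proposal is correct and takes essentially the same approach as the paper: the paper's proof consists of citing the classical $\GL_n\downarrow\GL_{n-1}$ branching law \cite[Theorem 8.1.1]{GTM255} for $n'=n-1$ and Pieri's rule \cite[Corollary 9.2.4]{GTM255} for $n'=n$, and you have carried out exactly this strategy, filling in the combinatorial verification that the interlacing and horizontal-strip conditions translate into the stated inequalities. Your handling of the determinant twists and the observation that the strip-size condition is forced by the assumption on $\mathrm{d}\eta$ are accurate.
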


	Though no critical condition will be required in this article, we would like to discuss the relation between balanced and critical characters for completeness. 
	
	We say an algebraic character $\chi$ of $\K^{\times}$ is $\xi$-critical if
	\begin{itemize}
		\item ($n'=n-1$) $s=\frac{1}{2}$ is not a pole of $\RL(s,\pi_{\mu}\times\pi_{\nu}\times\chi)$ or $\RL(s,\pi_{\mu}^{\vee}\times\pi^{\vee}_{\nu}\times\chi^{-1})$.
		\item ($n'=n$) $s=0$ is not a pole of $\RL(s,\pi_{\mu}\times\pi_{\nu}\times\chi)$ or $\RL(1-s,\pi_{\mu}^{\vee}\times\pi^{\vee}_{\nu}\times\chi^{-1})$.
	\end{itemize}
	Denote the set of all $\xi$-critical characters by $\mathrm{Crit}(\xi)$. We further denote
	\[
	\begin{aligned}
		\RB(\xi)^{\natural}&:=\set{\chi\in\RB(\xi) | \chi_{\iota}=\chi_{\overline{\iota}}},\\
		\mathrm{Crit}(\xi)^{\natural}&:=\set{\chi\in\mathrm{Crit}(\xi) | \chi_{\iota}=\chi_{\overline{\iota}}}.
	\end{aligned}
	\]
	The rest of this section is devoted to prove the following lemma.
	
	\begin{lemp} \label{lem-bc}\
\begin{itemize}
    \item[(a)] For \emph{Case $(-)$} and \emph{Case $(+)$} we have $\RB(\xi)\cap\mathrm{Crit}(\xi)=\emptyset$.
    \item[(b)] For \emph{Case ($\pm$)} and the \emph{Case} $n'=n-1$ we have $\RB(\xi)\subset\mathrm{Crit}(\xi)$. Moreover, $\RB(\xi)^{\natural}=\mathrm{Crit}(\xi)^{\natural}$ whenever $\RB(\xi)^{\natural}\neq\emptyset$.
\end{itemize}
	\end{lemp}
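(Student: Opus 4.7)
The plan is to make both the balanced and critical conditions fully explicit via the Langlands parameters of $\pi_\mu$ and $\pi_\nu$ recalled in Section~\ref{sec:TP}, and then to compare them case by case. Concretely, $\RL(s,\pi_\mu\times\pi_\nu\times\chi)$ factors as a product of local gamma factors indexed by pairs $(i,j)$ of Langlands parameters of $\pi_\mu$ and $\pi_\nu$, twisted by $\chi$. For $\K\cong\C$ each such factor has the form $\Gamma_\C(s+M_{ij})$ with
\[
M_{ij}=\max\bigl\{\widetilde\mu_i^{\iota}+\widetilde\nu_j^{\iota}+\chi_{\iota},\ \widetilde\mu_{n+1-i}^{\bar\iota}+\widetilde\nu_{n'+1-j}^{\bar\iota}+\chi_{\bar\iota}\bigr\},
\]
and for $\K\cong\R$ one obtains an analogous product of $\Gamma_\R$ and $\Gamma_\C$ factors arising from the discrete-series or middle-character slots in the realization of $\pi_\mu$ and $\pi_\nu$. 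Since $\Gamma_\C(s+a)$ has a pole at $s=s_0$ iff $a+s_0\in\Z_{\leq 0}$ (and similarly for $\Gamma_\R$), the critical condition translates, via the formula \eqref{lmu}, into explicit integer (or half-integer) inequalities on the quantities $\mu_i^{\iota'}+\nu_j^{\iota'}+\chi_{\iota'}$.

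For part~(a), in Case~$(-)$ I would single out the pairs $(i,n+1-i)$ on the Langlands grid. For these pairs the two arguments of $M_{i,n+1-i}$ reduce to $\mu_i^{\iota}+\nu_{n+1-i}^{\iota}+\chi_{\iota}$ and $\mu_{n+1-i}^{\bar\iota}+\nu_i^{\bar\iota}+\chi_{\bar\iota}$, both of which are integers bounded by $0$ under the balanced hypothesis (Lemma~\ref{lem-balanced}). Consequently $\Gamma_\C(s+M_{i,n+1-i})$ has a pole at $s=0$, so $\chi\notin\mathrm{Crit}(\xi)$. Case~$(+)$ follows by the same argument applied to the contragredient L-function at $s=1$, or equivalently by the duality $(\pi_\mu,\pi_\nu,\chi)\mapsto(\pi_\mu^\vee,\pi_\nu^\vee,\chi^{-1}|\cdot|_\K)$ swapping Cases~$(-)$ and~$(+)$ already recorded in the paper.

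For part~(b), in Case~$(\pm)$ and in Case~$n'=n-1$, the balanced inequalities bound only one of the two arguments of each relevant $M_{ij}$ from above, while the complementary (minimum-type) balanced inequality forces the \emph{other} argument strictly above the pole threshold ($\geq 1$ for $n'=n$, $\geq 1/2$ for $n'=n-1$). Therefore no factor of $\RL$ or of its contragredient has a pole at the relevant value of $s$, giving $\RB(\xi)\subset\mathrm{Crit}(\xi)$. For the $\natural$ equality, under the assumption $\chi_\iota=\chi_{\bar\iota}$ (which is automatic when $\K\cong\R$) the two arguments of each $M_{ij}$ become related by the involution $\iota\leftrightarrow\bar\iota$; combining the no-pole condition at $(i,j)$ with the corresponding condition at the swapped pair $(n+1-i,n'+1-j)$, together with the contragredient no-pole conditions, forces each individual argument to exceed the threshold. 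The resulting inequalities are exactly the balanced ones, yielding $\mathrm{Crit}(\xi)^\natural\subset\RB(\xi)^\natural$.

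The main obstacle is the case-heavy bookkeeping: matching the indexing conventions between the Langlands grid and the balanced grid (where in Case~$(\pm)$ the $\iota$- and $\bar\iota$-pairings are shifted by one), and handling the $\K\cong\R$ sub-case with $n$ or $n'$ odd, where the middle slot is a one-dimensional character involving $\varepsilon_{\pi_\mu}$ that interacts with $\delta(\chi)$ in the pole criterion for $\Gamma_\R$. Careful boundary analysis is also required where the balanced inequalities are sharp and the non-emptiness hypothesis in the $\natural$ statement has to be invoked to discard degenerate configurations.
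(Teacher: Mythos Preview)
Your plan matches the paper's approach: compute the archimedean $L$-factors explicitly from the Langlands data in Section~\ref{sec:TP}, identify the critical condition as a family of max/min inequalities on $\widetilde\mu_i^{\iota'}+\widetilde\nu_k^{\iota'}+\chi_{\iota'}$, and compare with Lemma~\ref{lem-balanced}. Part~(a) is exactly as you describe (the paper runs the contrapositive, but the content is identical), and the inclusion $\RB(\xi)\subset\mathrm{Crit}(\xi)$ in~(b) is the straightforward verification you indicate.

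The one place where your sketch is looser than the paper is the reverse inclusion $\mathrm{Crit}(\xi)^\natural\subset\RB(\xi)^\natural$. Your ``pair $(i,j)$ with $(n+1-i,n'+1-j)$ and combine with the contragredient condition'' does not by itself determine which of the two terms in each $M_{ij}$ is the larger one, so it does not directly yield the one-sided balanced inequalities. The paper's device is to use the hypothesis $\RB(\xi)^\natural\neq\emptyset$ to extract a \emph{structural} inequality on $(\mu,\nu)$ alone, independent of $\chi$: for Case~$(\pm)$,
\[
\widetilde\mu_i^{\iota}+\widetilde\nu_k^{\iota}>\widetilde\mu_{n+1-i}^{\bar\iota}+\widetilde\nu_{n+1-k}^{\bar\iota}\quad\Longleftrightarrow\quad i+k\leq n,
\]
with the analogous dichotomy for $n'=n-1$. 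Once this sign pattern is fixed, for any $\chi$ with $\chi_\iota=\chi_{\bar\iota}$ the max and min in the critical condition~\eqref{critical} are identified explicitly, and the critical inequalities unwind to precisely the balanced ones. That is the exact role of the non-emptiness hypothesis---not merely to ``discard degenerate configurations'' but to pin down the ordering on the Langlands grid before any particular $\chi$ enters.
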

	
	\subsubsection{Archimedean L-factors}\label{Lfactor}
	
	Before the proof, we first recall some standard facts about archimedean local L-factors following \cite{K}.  
	
	If $\K\cong\R$, then the followings hold true.
	\begin{itemize}
		\item For all $t\in\C$ and $\delta\in\{0,1\}$,
		\[
		\RL(s,|\cdot|_{\K}^t\mathrm{sgn}_{\K^{\times}}^{\delta})=\Gamma_{\R}(s+t+\delta).
		\]
		\item For all $a,b\in\C$ with $a-b\in\Z\backslash\{0\}$, and $t,\delta$ as above,
		\[
		\RL(s,D_{a,b}\times|\cdot|_{\K}^t\mathrm{sgn}^{\delta}_{\K^{\times}})=\Gamma_{\C}(s+t+\max\{a,b\}).
		\]
		Here and henceforth, for any $a',b'\in\C$ with $a'-b'\in\Z$,
		\[
		\max\{a',b'\}:=\begin{cases}
			a', & a'-b'\geq 0;\\
			b', & \text{otherwise}.
		\end{cases}
		\]
		\item For all $a,b,a',b'\in\C$ with $a-b,a'-b'\in\Z\backslash\{0\}$,
		\[
		\RL(s,D_{a,b}\times D_{a',b'})=\Gamma_{\C}(s+\max\{a+a',b+b'\})\Gamma_{\C}(s+\max\{a+b',b+a'\}).
		\]
	\end{itemize}
	
	If $\K\cong\C$, then for all $a,b\in\C$ with $a-b\in\Z$,
	\[
	\RL(s,\iota^a\overline{\iota}^b)=\Gamma_{\C}(s+\max\{a,b\}).
	\]
	
	\subsubsection{\emph{Case ($-$) and Case ($+$)}}
	
	We are going to show that $\RB(\xi)\cap\mathrm{Crit}(\xi)=\emptyset$ for Case ($-$) and leave aside Case ($+$) which can be treated similarly. Take a critical place $\chi\in\mathrm{Crit}(\xi)$. Then $s=0$ is not a pole for gamma factors
	\[
	\prod_{1\leq i\leq n}\Gamma_{\C}\left(s+\max\{\chi_{\iota}+\widetilde{\mu}_i^{\iota}+\widetilde{\nu}_{n+1-i}^{\iota},\chi_{\overline{\iota}}+\widetilde{\mu}_{n+1-i}^{\overline{\iota}}+\widetilde{\nu}_{i}^{\overline{\iota}}\}\right).
	\]
	This implies that
	\[
		\chi_{\iota}+\max_{1\leq i\leq n}\{\mu_i^{\iota}+\nu_{n+1-i}^{\iota}\}>0\quad \text{or}\quad	\chi_{\overline{\iota}}+\max_{1\leq i\leq n}\{\mu_{i}^{\overline{\iota}}+\nu_{n+1-i}^{\overline{\iota}}\}>0,
	\]
	which contradicts the balanced condition in Lemma \ref{lem-balanced}.

	\subsubsection{\emph{Case ($\pm$)}}

	We calculate the L-function
	\[
	\begin{aligned}
		&\RL(s,\pi_{\mu}\times\pi_{\nu}\times\chi)\\
        =&\prod_{i, k =1,2,\dots, n}\Gamma_{\C}\left(s+\max\{\chi_{\iota}+\widetilde{\mu}_i^{\iota}+\widetilde{\nu}_k^{\iota},\chi_{\overline{\iota}}+\widetilde{\mu}_{n+1-i}^{\overline{\iota}}+\widetilde{\nu}_{n+1-k}^{\overline{\iota}}\}\right),\\
		&\RL(1-s,\pi_{\mu}^{\vee}\times\pi_{\nu}^{\vee}\times\chi^{-1})\\
        =&\prod_{i, k=1,2,\dots, n}\Gamma_{\C}\left(1-s-\min\{\chi_{\iota}+\widetilde{\mu}_i^{\iota}+\widetilde{\nu}_k^{\iota},\chi_{\overline{\iota}}+\widetilde{\mu}_{n+1-i}^{\overline{\iota}}+\widetilde{\nu}_{n+1-k}^{\overline{\iota}}\}\right).
	\end{aligned}
	\]
	Then $\chi\in\mathrm{Crit}(\xi)$ if and only if
	\begin{equation}\label{critical}
		\begin{aligned}
			\max\{\chi_{\iota}+\widetilde{\mu}_i^{\iota}+\widetilde{\nu}_k^{\iota},\chi_{\overline{\iota}}+\widetilde{\mu}_{n+1-i}^{\overline{\iota}}+\widetilde{\nu}_{n+1-k}^{\overline{\iota}}\}&\geq 1,\\
			\min\{\chi_{\iota}+\widetilde{\mu}_i^{\iota}+\widetilde{\nu}_k^{\iota},\chi_{\overline{\iota}}+\widetilde{\mu}_{n+1-i}^{\overline{\iota}}+\widetilde{\nu}_{n+1-k}^{\overline{\iota}}\}&\leq 0
		\end{aligned}
	\end{equation}
	for all $i, k =1,2,\dots, n$. The assertion that $\RB(\xi)\subset\mathrm{Crit}(\xi)$ can be checked straightforwardly by Lemma \ref{lem-balanced}.

	Assume $\RB(\xi)\neq\emptyset$ so that
	\[
	\begin{aligned}
		\max_{1\leq i\leq n}\{\mu_i^{\iota}+\nu_{n+1-i}^{\iota}\}&\leq \min_{1\leq i\le n-1}\{\mu_i^{\iota}+\nu_{n-i}^{\iota}\},\\
		\max_{2\leq i\leq n}\{\mu_i^{\overline{\iota}}+\nu_{n+2-i}^{\overline{\iota}}\}&\leq \min_{1\leq i\le n-1}\{\mu_i^{\overline{\iota}}+\nu_{n+1-i}^{\overline{\iota}}\},
	\end{aligned}
	\]
	which implies
	\begin{equation} \label{lem3.11}
		\begin{aligned}
			\widetilde{\mu}_i^{\iota}+\widetilde{\nu}_k^{\iota}&>\widetilde{\mu}_{n+1-i}^{\overline{\iota}}+\widetilde{\nu}_{n+1-k}^{\overline{\iota}},&\qquad&i+k\leq n,\\
			\widetilde{\mu}_i^{\iota}+\widetilde{\nu}_k^{\iota}&<\widetilde{\mu}_{n+1-i}^{\overline{\iota}}+\widetilde{\nu}_{n+1-k}^{\overline{\iota}},&\qquad&i+k> n.
		\end{aligned}
	\end{equation}
	Then for $\chi\in\mathrm{Crit}(\xi)^{\natural}$, \eqref{critical} is equivalent to the balanced condition in Lemma \ref{lem-balanced} and thus $\RB(\xi)^{\natural}=\mathrm{Crit}(\xi)^{\natural}$. This proves the assertion for Case ($\pm$).

	We  also remark that $\RB(\xi)\neq\mathrm{Crit}(\xi)$ in general. For example, consider
	\[
	\mu:=(0,\dots,0;0,\dots,0),\qquad \nu:=(0,\dots,0;1,\dots,1).
	\]
	Then $\RB(\xi)$ contains only the trivial character while
	\[
	\mathrm{Crit}(\xi)=\set{\iota^{\chi_{\iota}}\overline{\iota}^{\chi_{\overline{\iota}}} |\chi_{\iota}\leq 0,\,\chi_{\overline{\iota}}\geq 0}.
	\]

	\subsubsection{$\GL_n\times\GL_{n-1}$}
	We calculate the L-function
	\[
	\begin{aligned}
		&\RL(s,\pi_{\mu}\times\pi_{\nu}\times\chi)\\
        =&\prod_{\substack{i=1,2,...,n\\k=1,2,...,n-1}}\Gamma_{\C}\left(s+\max\{\chi_{\iota}+\widetilde{\mu}_i^{\iota}+\widetilde{\nu}_k^{\iota},\chi_{\overline{\iota}}+\widetilde{\mu}_{n+1-i}^{\overline{\iota}}+\widetilde{\nu}_{n-k}^{\overline{\iota}}\}\right),\\
		&\RL(1-s,\pi_{\mu}^{\vee}\times\pi_{\nu}^{\vee}\times\chi^{-1})\\
        =&\prod_{\substack{i=1,2,...,n\\k=1,2,...,n-1}}\Gamma_{\C}\left(1-s-\min\{\chi_{\iota}+\widetilde{\mu}_i^{\iota}+\widetilde{\nu}_k^{\iota},\chi_{\overline{\iota}}+\widetilde{\mu}_{n+1-i}^{\overline{\iota}}+\widetilde{\nu}_{n-k}^{\overline{\iota}}\}\right).
	\end{aligned}
	\]
	Then $\chi\in\mathrm{Crit}(\xi)$ if and only if
	\begin{equation}\label{critical'}
		\begin{aligned}
			\max\{\chi_{\iota}+\widetilde{\mu}_i^{\iota}+\widetilde{\nu}_k^{\iota},\chi_{\overline{\iota}}+\widetilde{\mu}_{n+1-i}^{\overline{\iota}}+\widetilde{\nu}_{n-k}^{\overline{\iota}}\}&\geq \frac{1}{2},\\
			\min\{\chi_{\iota}+\widetilde{\mu}_i^{\iota}+\widetilde{\nu}_k^{\iota},\chi_{\overline{\iota}}+\widetilde{\mu}_{n+1-i}^{\overline{\iota}}+\widetilde{\nu}_{n-k}^{\overline{\iota}}\}&\leq \frac{1}{2}.
		\end{aligned}
	\end{equation}
	for all $1\leq i\leq n$, $1\leq k\leq n-1$. The first assertion can be checked straightforwardly by Lemma \ref{lem-balanced}.
	
	Assume $\RB(\xi)\neq\emptyset$ so that
	\[
	\begin{aligned}
		\max_{2\leq i\leq n}\{\mu_i^{\iota}+\nu_{n+1-i}^{\iota}\}&\leq	\min_{1\leq i\leq n-1}\{\mu_i^{\iota}+\nu_{n-i}^{\iota}\},\\
		\max_{2\leq i\leq n}\{\mu_i^{\overline{\iota}}+\nu_{n+1-i}^{\overline{\iota}}\} &\leq \min_{1\leq i\leq n-1}\{\mu_i^{\overline{\iota}}+\nu_{n-i}^{\overline{\iota}}\}.
	\end{aligned}
	\]
	which implies
	\[
	\widetilde{\mu}_i^{\iota}+\widetilde{\nu}_k^{\iota}-\widetilde{\mu}_{n+1-i}^{\overline{\iota}}-\widetilde{\nu}_{n-k}^{\overline{\iota}}
	\]
	is positive if $i+k\leq n$ and is negative otherwise for every $\iota\in\mathcal{E}_{\K}$. Then for $\chi\in\mathrm{Crit}(\xi)^{\natural}$, \eqref{critical'} is equivalent to the balanced condition in Lemma \ref{lem-balanced} and thus $\RB(\xi)^{\natural}=\mathrm{Crit}(\xi)^{\natural}$. This completes the proof of Lemma \ref{lem-bc}.

	\section{Proof of archimedean period relations}
	
	In this section, we prove the archimedean period relations (Theorem \ref{mainthm}) for $\GL_n\times\GL_n$. Let $\pi_{\mu}\in\Omega(\mu)$, $\pi_{\nu}\in\Omega(\nu)$ for dominant weights $\mu\in(\Z^n)^{\mathcal{E}_{\K}}$, $\nu\in(\Z^n)^{\mathcal{E}_{\K}}$ and $\eta$ the product of central characters of $\pi_{\mu}$, $\pi_{\nu}$. Set $\xi:=(\mu,\nu)$, $\pi_{\xi}:=\pi_{\mu}\,\widehat{\otimes}\,\pi_{\nu}$ and take a $\xi$-balanced character $\chi\in\RB(\xi)$. We write $\pi_{\xi,\chi}:=\pi_{\xi}\,\widehat{\otimes}\, \omega_{n,\chi}$ and the coefficient system $F_{\xi,\chi}:=F_{\mu}\otimes F_{\nu}\otimes F_{\eta,\chi}$ for short. Recall from \eqref{phixichi} and \eqref{jxichi} (see also Section \ref{sec:TP}, \ref{sec:Translation}) the balanced map and the translation map
	\[
	\phi_{\xi,\chi}\in F_{\xi,\chi}^{\GL_n(\K\otimes_{\R}\C)},\qquad\jmath_{\xi,\chi}:F_{\xi_0,\chi_0}^{\vee}\otimes\pi_{\xi_0,\chi_0}\to F_{\xi,\chi}^{\vee}\otimes \pi_{\xi,\chi}.
	\]
	We define a composition map
	\[
	\gamma_{\xi,\chi}:=\jmath_{\xi,\chi}\circ(\phi_{\xi,\chi}\otimes 1\otimes\cdot):\pi_{\xi_0,\chi_0}\to F_{\xi,\chi}\otimes F_{\xi_0,\chi_0}^{\vee}\otimes\pi_{\xi_0,\chi_0}\to\pi_{\xi,\chi}.
	\]
    Same notations applied for $\xi_0:=(0_n,0_n)$. We will prove the following result.
	
	\begin{thmp}\label{thmap2}
		The diagram 
		\[
		\begin{CD}
			\pi_{\xi_0,\chi_0}
			@>   \oZ_{\xi_0,\chi_0}^{\Theta} >>    \mathfrak{M}^*_{n,\K} \\
			@ V \gamma_{\xi, \chi}  V  V        @ | \\
			\pi_{\xi,\chi} @> \Omega_{\xi,\chi}\cdot \oZ_{\xi,\chi}^{\Theta}  >>   \mathfrak{M}^*_{n,\K}\\
		\end{CD}
		\]
		commutes, where $\Omega_{\xi,\chi}$ is given as in Theorem \ref{mainthm}.
	\end{thmp}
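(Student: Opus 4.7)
The plan is to reduce Theorem \ref{thmap2} to a scalar identity by local multiplicity one, then pin down the scalar by lifting the computation to the induced models $I_\mu$, $I_\nu$ and the Weil representation $\omega_n$, where the translation maps constructed in Section \ref{sec:Translation} admit explicit expressions. By the local uniqueness of Rankin-Selberg invariant functionals \cite{J09}, the space $\Hom_{\GL_n(\K)}(\pi_{\xi_0,\chi_0},\mathfrak{M}^*_{n,\K})$ is one-dimensional and contains the nonzero functional $\oZ^\Theta_{\xi_0,\chi_0}$. Since $\oZ^\Theta_{\xi,\chi}\circ\gamma_{\xi,\chi}$ also lies in this Hom space, there is a unique scalar $c\in\BC$ with $\oZ^\Theta_{\xi,\chi}\circ\gamma_{\xi,\chi}=c\cdot\oZ^\Theta_{\xi_0,\chi_0}$, and the remaining task is to identify $c$ with $\Omega_{\xi,\chi}$.

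To compute $c$, I would lift from the quotients $\pi_\mu$, $\pi_\nu$ to the induced models. The diagram \eqref{I-pi-dual} expresses $\check\jmath_\mu$, restricted to the image of $F_\mu^{\RN_n(\K\otimes_\R\BC)}\otimes\pi_{0_n}$, as the descent of the multiplication map $\check\imath_\mu(\varphi\otimes f)(g)=\varphi(g)f(g)$, and analogously for $\check\jmath'_\nu$. On the Weil-representation side, the diagram \eqref{RIdiagram} and the factorization \eqref{alpha-fac} show that $\gamma_{\xi,\chi}$ restricted to $\omega_{n,\chi_0}$ is the $C_{\eta,\chi}^{-1}$-multiple of a polynomial multiplication composed with Fourier transform and a differential-operator action. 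I would then evaluate both functionals on test vectors $(f_0,f_0',\Phi_0)$ with $f_0|_{\RN_n(\K)}$ and $f_0'|_{\RN_n(\K)}$ Schwartz, so that the formula \eqref{la'} for the Whittaker functionals on $I_{0_n}$ applies directly; the normalization $\phi_{\xi,\chi}(\tilde z)=1$ from Proposition \ref{prop:phixichi} then exhibits the integrand for $\oZ^\Theta_{\xi,\chi}\circ\gamma_{\xi,\chi}$ as a product of the integrand for $\oZ^\Theta_{\xi_0,\chi_0}$ with an explicit scalar, after an appropriate change of variables on $\RN_n(\K)\backslash\GL_n(\K)$.

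The main obstacle is the careful case-by-case bookkeeping of the normalizing constants. In Case $(-)$, the factor $c_{\xi,\chi}\cdot\varepsilon_{\xi,\chi}$ arises from evaluating the polynomial $\phi_{\xi,\chi}$ at $\tilde z=z$ and tracking the monomial exponents over $\iota'\in\CE_\K$, while $g_{\xi,\chi}(0)$ appears by combining $C_{\eta,\chi}^{-1}=\RL(0,\eta\chi^n)/\RL(0,\eta_0\chi_0^n)$ from \eqref{normalizingfactor} with the ratio of L-factors $\RL(0,\pi_\mu\times\pi_\nu\times\chi)/\RL(0,\pi_{\mu_0}\times\pi_{\nu_0}\times\chi_0)$ implicit in the normalization \eqref{normalizedRS}, expanded via the archimedean L-factor formulas of Section \ref{Lfactor}. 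In Case $(\pm)$, the Fourier transform \eqref{fourier} injects powers of $\varepsilon_{\psi_\R}\cdot 2\pi\mathrm{i}$, the constant $C_{\eta,\chi}=(-\varepsilon_{\psi_\R}\cdot\mathrm{i})^{\eta_{\bar\iota}+n\chi_{\bar\iota}-n}$ supplies additional signs, and the Weyl-element twists $w_n$ in $\ell_{\eta,\chi}$ and in $\check z^{\bar\iota}$, together with the signs from the $z_n$ matrices defined in \eqref{zn}, consolidate into $\varepsilon'_{\xi,\chi}$.

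The hardest part will be the Case $(\pm)$ analysis, where the two places $\iota$ and $\bar\iota$ enter asymmetrically through \eqref{alpha-fac} and through $\tilde z=(z^\iota,\check z^{\bar\iota})$. One must interlace the differential-operator action on the $\bar\iota$-side with the polynomial multiplication on the $\iota$-side and the intermediate Fourier transform on $\CS(\K^{n\times 1})$, then show that after pairing with the Whittaker functionals and changing variables, all $(\varepsilon_{\psi_\R}\mathrm{i})$-powers and combinatorial signs collapse exactly into $c_{\xi,\chi}\cdot\varepsilon'_{\xi,\chi}$; avoiding arithmetic errors in this accumulation is where most of the effort will go.
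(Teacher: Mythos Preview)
Your reduction to a single scalar via multiplicity one and your lift to the principal series $I_\mu$, $I_\nu$ match the paper. However, the step where you say ``the normalization $\phi_{\xi,\chi}(\tilde z)=1$ then exhibits the integrand for $\oZ^\Theta_{\xi,\chi}\circ\gamma_{\xi,\chi}$ as a product of the integrand for $\oZ^\Theta_{\xi_0,\chi_0}$ with an explicit scalar, after an appropriate change of variables'' does not work as stated, and this is where the real content lies. The Rankin--Selberg integrand is built from the \emph{Whittaker values} $\langle\wt\lambda_\mu, g.f\rangle$, not from the section values $f(g)$. When you multiply a section $f\in I_{0_n}$ by the algebraic function $\varphi\in F_\mu$, the resulting Whittaker value
\[
\wt\lambda_\mu\big(g.(\varphi\cdot f)\big)=\int_{\RN_n(\K)}\varphi(ug)\,f(ug)\,\overline{\psi_{n,\K}}(u)\,\od u
\]
is \emph{not} $\varphi(g)\cdot\wt\lambda_{0_n}(g.f)$, because $\varphi(ug)$ genuinely depends on $u$. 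So there is no direct ``integrand factorization'' of $\oZ^\Theta$, and no change of variables on $\RN_n(\K)\backslash\GL_n(\K)$ repairs this.

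What the paper does instead is to introduce a second functional $\Lambda_{\chi'}$, the integral of the \emph{section} (not the Whittaker function) over the open $\GL_n(\K)$-orbit through $\tilde z$. For $\Lambda_{\chi'}$ your intuition is exactly right: since $\phi_{\xi,\chi}$ is $\GL_n$-invariant and normalized to be $1$ at $\tilde z$, one has $\Lambda_{\chi'}(\phi_{\xi,\chi}\cdot\Psi)=\Lambda_{\chi'}(\Psi)$. The bridge back to $\oZ_{\chi'}$ is the main result of \cite{LLSS}, which gives $\Lambda_{\chi'}=\Gamma_{\psi_\K}(\varrho,\varrho',\chi')\cdot\oZ_{\chi'}$, where $\Gamma_{\psi_\K}$ is an explicit product of signs and local gamma factors. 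The ratio of these $\Gamma_{\psi_\K}$ factors (for $(\varrho^\mu,\varrho^\nu,\chi)$ versus $(\varrho^{0_n},\varrho^{0_n},\chi_0)$), combined with the L-factor ratio from normalization, is what actually produces $c_{\xi,\chi}\cdot\varepsilon_{\xi,\chi}\cdot g_{\xi,\chi}(0)$ in Case $(-)$; it is not just an L-factor ratio as you suggest. In Case $(\pm)$ the paper further interposes the Rankin--Selberg functional equation \cite{J09} to pass through the Fourier transform, decomposing $\delta_{\xi,\chi}$ into four steps and assembling the constant from four commutative squares. Your proposal is missing both the open-orbit functional $\Lambda_{\chi'}$ and the \cite{LLSS} identity; without them the computation cannot be carried out.
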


\subsection{Proof of the main theorem}

We explain how to deduce the main Theorem \ref{mainthm} from Theorem \ref{thmap2}. 
    Recall the normalized Rankin-Selberg integrals
\[
\RZ_{\xi,\chi'}^{\circ}:=\frac{\RL(0,\eta\chi'^n)\cdot\RZ_{\xi,\chi'}}{\RL(0,\pi_{\mu}\times\pi_{\nu}\times\chi')}\quad \text{and}\quad \RZ_{\xi,\chi'}^{\Theta}:=\frac{\RZ'_{\xi,\chi'}}{\RL(0,\pi_{\mu}\times\pi_{\nu}\times\chi')},
\]
where  \[
\RZ_{\xi,\chi'}:\pi_{\xi}\,\widehat{\otimes}\,I_{\eta,\chi'}\to\mathfrak{M}_{n,\K}^{\ast}\quad \text{and}\quad \RZ'_{\xi,\chi'}:\pi_{\xi}\,\widehat{\otimes}\,\omega_{n,\chi'}\to\mathfrak{M}_{n,\K}^{\ast}
    \]
are defined by \eqref{rsintegral} and \eqref{integraltheta} respectively.
 Clearly, the diagram 
  \begin{equation}\label{thetadiagram}
		\begin{CD}
			\pi_{\xi}\,\widehat{\otimes}\,\omega_{n,\chi'} @>\RZ^{\Theta}_{\xi,\chi'}>>\mathfrak{M}_{n,\K}^{\ast}\\
			@V\mathrm{f}_{\eta,\chi'}VV @|\\
			\pi_{\xi}\,\widehat{\otimes}\,I_{\eta,\chi'} @>\RZ^{\circ}_{\xi,\chi'}>>\mathfrak{M}_{n,\K}^{\ast}
		\end{CD}
	\end{equation}
commutes by the definitions of $\RZ_{\xi,\chi'}$, $\RZ'_{\xi,\chi'}$ and $\mathrm{f}_{\eta,\chi'}$. Recall from \eqref{imf} that
$
\tau_{\eta,\chi'}={\rm im}({\rm f}_{\eta,\chi'})\subset I_{\eta,\chi'}.
$

\begin{prp} \label{RSnn}
When $\chi\in\RB(\xi)$ is balanced, the normalized Rankin-Selberg integral $\RZ_{\xi,\chi'}^{\circ}$ is holomorphic at $\chi'=\chi$ and  induces a linear functional
\[
\RZ_{\xi,\chi}^{\circ}\in\mathrm{Hom}_{\GL_n(\K)}\left(\pi_{\xi}\,\widehat{\otimes}\,I_{\eta,\chi},\mathfrak{M}_{n,\K}^{\ast}\right)
	\]
    which is nonzero on $\pi_{\xi}\,\widehat{\otimes}\,\tau_{\eta,\chi}$.
\end{prp}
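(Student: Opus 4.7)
The plan is to derive Proposition \ref{RSnn} from Jacquet's theorem \cite{J09} for the theta-normalized integral $\RZ^{\Theta}_{\xi,\chi'}$, which is entire in $\chi'\in\widehat{\K^{\times}}$ and supplies a nonzero element of $\mathrm{Hom}_{\GL_n(\K)}(\pi_{\xi}\,\widehat{\otimes}\,\omega_{n,\chi'},\mathfrak{M}_{n,\K}^{\ast})$ for every $\chi'$. The bridge to $\RZ^{\circ}$ is furnished by the commutative diagram \eqref{thetadiagram} through the intertwining ${\rm f}_{\eta,\chi'}:\omega_{n,\chi'}\to I_{\eta,\chi'}$.

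First I would verify \eqref{thetadiagram} by direct unfolding: substituting the defining Tate integral for ${\rm f}_{\eta,\chi'}(\Phi)$ into \eqref{rsintegral}, exchanging the inner $\K^{\times}$-integration with the outer $\RZ_n(\K)\RN_n(\K)\backslash\GL_n(\K)$-integration for $\Re(\chi')$ sufficiently large, and recombining into an integration over $\RN_n(\K)\backslash\GL_n(\K)$, one recovers $\RZ'_{\xi,\chi'}(f,f',\Phi)$ up to the factor $\RL(s,\eta\chi'^n)^{-1}|_{s=0}$ inside ${\rm f}_{\eta,\chi'}$, which is precisely canceled by the $\RL(0,\eta\chi'^n)$ appearing in the definition of $\RZ^{\circ}$. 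Meromorphic continuation in $\chi'$ then propagates the identity. Granting \eqref{thetadiagram}, non-vanishing of $\RZ^{\circ}_{\xi,\chi}$ on $\pi_{\xi}\,\widehat{\otimes}\,\tau_{\eta,\chi}$ is immediate: by \eqref{imf} every $\varphi\in\tau_{\eta,\chi}$ can be written as ${\rm f}_{\eta,\chi}(\Phi)$, so $\RZ^{\circ}_{\xi,\chi}(f,f',\varphi)=\RZ^{\Theta}_{\xi,\chi}(f,f',\Phi)$, which is made nonzero for appropriate data by Jacquet.

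For holomorphy at $\chi'=\chi$ I would split cases via Proposition \ref{thmX}. Away from case (b) (i.e., when ${\rm f}_{\eta,\chi}$ is surjective), any flat family of sections $\varphi_{\chi'}\in I_{\eta,\chi'}$ in the compact-picture realization lifts to a holomorphic family $\Phi_{\chi'}\in\CS(\K^{1\times n})$ with $\varphi_{\chi'}={\rm f}_{\eta,\chi'}(\Phi_{\chi'})$, and holomorphy reduces to entireness of $\RZ^{\Theta}$. In case (b), where $\tau_{\eta,\chi}\subsetneq I_{\eta,\chi}$ with finite-dimensional irreducible quotient, Jacquet's entireness of $\RZ_{\xi,\chi'}/\RL(0,\pi_{\mu}\times\pi_{\nu}\times\chi')$ isolates any potential pole of $\RZ^{\circ}_{\xi,\chi'}$ at $\chi$ to the factor $\RL(0,\eta\chi'^n)$; its residue would descend to a $\GL_n(\K)$-equivariant functional on $\pi_{\xi}\,\widehat{\otimes}\,(I_{\eta,\chi}/\tau_{\eta,\chi})$, which one rules out via a residue computation along the one-parameter deformation $\chi_t:=\chi\cdot|\cdot|_{\K}^t$ combined with the balanced/critical constraint of Lemma \ref{lem-bc} and the infinitesimal-character mismatch between $\pi_{\xi}$ and the finite-dimensional quotient. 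The main obstacle I anticipate is precisely this residue analysis in case (b): the diagram \eqref{thetadiagram} controls $\RZ^{\circ}$ only on $\tau_{\eta,\chi}$, so extending holomorphy to the full $I_{\eta,\chi}$ requires a separate argument excluding poles carried by the finite-dimensional quotient, most cleanly handled by a direct residue calculation of the Tate integral defining ${\rm f}_{\eta,\chi_t}$ together with equivariance under $\RP_n(\K)$.
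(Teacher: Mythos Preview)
Your overall architecture matches the paper's: reduce to $\RZ^{\Theta}$ via the diagram \eqref{thetadiagram}, handle the easy case where ${\rm f}_{\eta,\chi}$ is an isomorphism, and isolate the difficulty to Proposition~\ref{thmX}(b). Your observation that the residue of $\RZ^{\circ}_{\xi,\chi'}$ at $\chi'=\chi$ descends to a functional on $\pi_\xi\,\widehat\otimes\,(I_{\eta,\chi}/\tau_{\eta,\chi})$ is also correct, and in fact this reformulation is equivalent to what the paper proves.

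The gap is in how you propose to kill that residue. You write that in case~(b) one has ``$\tau_{\eta,\chi}\subsetneq I_{\eta,\chi}$ with finite-dimensional irreducible quotient'' and then appeal to an ``infinitesimal-character mismatch between $\pi_\xi$ and the finite-dimensional quotient.'' This is backwards: by Proposition~\ref{thmX}(b), $\tau_{\eta,\chi}$ is the \emph{finite}-dimensional irreducible \emph{subrepresentation} of $I_{\eta,\chi}$, so $I_{\eta,\chi}/\tau_{\eta,\chi}$ is the infinite-dimensional generic piece. There is no infinitesimal-character mismatch to exploit, since both composition factors of $I_{\eta,\chi}$ share the same infinitesimal character, and Lemma~\ref{lem-bc}(a) (that balanced implies non-critical in Case~($-$)) does not by itself control $\Hom_{\GL_n(\K)}(\pi_\xi\otimes(I_{\eta,\chi}/\tau_{\eta,\chi}),\C)$.

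The paper resolves this by a genuinely different representation-theoretic argument: it shows instead that $\Hom_{\GL_n(\K)}(\pi_\mu\,\widehat\otimes\,\pi_\nu\otimes\tau_{\eta,\chi},\C)\neq 0$, and then invokes multiplicity one \cite{SZ} to force $\RZ^{\Theta}_{\xi,\chi}$ to factor through the finite-dimensional quotient $\tau_{\eta,\chi}$ of $\Theta_n(\eta\chi^n)$. The nonvanishing is obtained by a Gelfand--Kirillov dimension argument: any irreducible quotient of $\pi_\mu\otimes\tau_{\eta,\chi}$ must be generic, and one checks via the principal series $I_\mu$ and the balanced condition (Lemma~\ref{lem-balanced}) that $(\pi_\mu\otimes\tau_{\eta,\chi})_{\chi_{\pi_\nu^\vee}}$ has $\pi_\nu^\vee$ as its unique generic quotient. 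So the substantive input you are missing is this existence statement for the invariant functional on $\pi_\xi\otimes\tau_{\eta,\chi}$, not a vanishing statement on $\pi_\xi\otimes(I_{\eta,\chi}/\tau_{\eta,\chi})$.
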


\begin{proof}
Clearly, the integral $\RZ_{\xi,\chi'}^{\Theta}$ factors through the quotient
\[
\pi_{\xi}\,\widehat{\otimes}\,\omega_{n,\chi'}\twoheadrightarrow\pi_{\xi}\,\widehat{\otimes}\,\Theta_n(\eta\chi^n)\otimes(\chi\circ\det).
\]
 By Proposition \ref{thmX}, when $s=0$ is not a pole of $\RL(s,\eta\chi^n)$,
 \[
 \mathrm{f}_{\eta,\chi}:\Theta_n(\eta\chi^n)\otimes(\chi\circ\det)\to I_{\eta,\chi}
 \]
is an isomorphism and the proposition is obvious in this case.  
   
Suppose that $s=0$ is a pole of $\RL(s,\eta\chi^n)$, so that we are in Case ($-$). Since this pole is simple, to show that 
$\oZ^\circ_{\xi,\chi'}$ is holomorphic at $\chi'=\chi$, it suffices to show that 
\[
\frac{\RZ_{\xi,\chi}}{\RL(0,\pi_{\mu}\times\pi_{\nu}\times\chi)}
\]
vanishes on $\pi_{\xi}\,\widehat{\otimes}\,I_{\eta,\chi}$. In view of the isomorphism ${\rm f}_{\eta,\chi}^\circ$ in \eqref{fcirc}, this is  equivalent to that $\RZ^{\Theta}_{\xi,\chi}$ vanishes on $\pi_{\xi}\,\widehat{\otimes}\,\omega_{n,\chi}^{\circ}$.
Since  $\mathrm{coker}({\rm j}_{\eta,\chi})\cong \tau_{\eta,\chi}$ by Proposition \ref{thmX}, using  the multiplicity one theorem \cite[Theorem C]{SZ} it suffices to show that
\begin{equation}\label{dim1}
\mathrm{Hom}_{\GL_n(\K)}(\pi_{\mu}\,\widehat{\otimes}\,\pi_{\nu}\otimes\tau_{\eta,\chi},\C)\neq \{0\},
\end{equation}
in which case it has dimension one. 

If $\sigma$ is an irreducible representation of $\GL_n(\K)$ that occurs as a quotient of $\pi_\mu \otimes \tau_{\eta,\chi}$, then there is a nonzero $\GL_n(\K)$-homomorphism $\pi_\mu \to \sigma \otimes \tau_{\eta,\chi}^\vee$, which implies that $\sigma$ has to be generic by considering the Gelfand-Kirillov dimensions \cite{Vo78}. 

Recall from \cite[Lemma 2.2]{LLS24} that $\pi_{\mu}$ is the unique irreducible quotient of 
\[
I_{\mu}=\mathrm{Ind}^{\GL_n(\K)}_{\overline{\RB}_n(\K)}(\chi_{\mu}\cdot\rho_n\cdot \varepsilon_{\pi_{\mu}}').
\]
Thus by the same argument as above, all irreducible quotients of $I_\mu\otimes \tau_{\eta,\chi}$ must be generic as well. 
Note that $\tau_{\eta,\chi}$ is a twist of $F_{\eta,\chi}$ by a quadratic character such that the central character of $\tau_{\eta,\chi}$ is $\eta^{-1}$.
In particular $\pi_\mu\otimes\tau_{\eta,\chi}\subset I_\mu\otimes \tau_{\eta,\chi}$ and $\pi_\nu^\vee$ have the same central character.
Denote by $\chi_{\pi_\nu^\vee}$ the infinitesimal character of $\pi_\nu^\vee$.  From the balanced condition in Lemma \ref{lem-balanced} and
\[
I_\mu \otimes \tau_{\eta,\chi}\cong \mathrm{Ind}^{\GL_n(\K)}_{\overline{\RB}_n(\K)}\left(\chi_{\mu}\cdot\rho_n\cdot \varepsilon_{\pi_{\mu}}'
\otimes \tau_{\eta,\chi}\right),
\]
it is easy to verify that the direct summand $(I_\mu\otimes \tau_{\eta,\chi})_{\chi_{\pi_\nu^\vee}}$ of $I_\mu\otimes \tau_{\eta,\chi}$ with generalized infinitesimal character 
$\chi_{\pi_\nu^\vee}$, is isomorphic to 
\[
I_{\check\nu}:=\mathrm{Ind}^{\GL_n(\K)}_{\overline{\RB}_n(\K)}(\chi_{\check\nu}\cdot\rho_n\cdot \varepsilon_{\pi_{\nu}}'),
\]
where $\check\nu:=(-\nu^{\iota'}_n, -\nu^{\iota'}_{n-1},\dots, -\nu^{\iota'}_1)_{\iota'\in\CE_\K}$. By \cite[Lemma 2.2]{LLS24} again, $\pi_\nu^\vee$ is the unique generic constituent of $I_{\check\nu}$ and the unique irreducible 
quotient of $I_{\check\nu}$.

Since $\pi_{\mu}\otimes\tau_{\eta,\chi}$ and $I_{\mu}\otimes\tau_{\eta,\chi}$ have the same generic constituents, we see that 
$(\pi_{\mu}\otimes\tau_{\eta,\chi})_{\chi_{\pi_\nu^\vee}}$ is nonzero and has a unique generic irreducible quotient which is isomorphic to $\pi_\nu^\vee$. It follows that
\[
\Hom_{\GL_n(\K)}(\pi_\mu\otimes \tau_{\eta,\chi}, \pi_\nu^\vee)\neq \{0\},
\]
which implies \eqref{dim1}.
\end{proof}

Putting the diagrams in Theorem \ref{thmap2}, \eqref{thetadiagram} and \eqref{RIdiagram} together, we obtain the commutative diagram
\[
\xymatrix{
		\pi_{\xi_0}\,\widehat{\otimes}\, \omega_{n,\chi_0}  \ar[dd]_{ \gamma_{\xi, \chi}}  \ar[rd]^(0.6){\mathrm{f}_{\eta_0,\chi_0}} \ar[rrr]^{  \oZ_{\xi_0,\chi_0}^\Theta}  
		&&&  \frak{M}^*_{n,\K}  \ar@{=}[rd]  \ar@{=}[dd]|!{[dll];[dr]}\hole \\
		&  \pi_{\xi_0, \chi_0}\,\widehat{\otimes}\,I_{\eta_0,\chi_0}  \ar[dd]_(0.4){\gamma_{\xi,\chi}}  \ar[rrr]^(0.4){  \oZ_{\xi_0,\chi_0}^{\circ}} 
		&&&   \frak{M}^*_{n,\K}  \ar@{=}[dd] \\
		\pi_{\xi}\,\widehat{\otimes}\, \omega_{n,\chi} \ar[rd]_{ \mathrm{f}_{\eta,\chi} } \ar[rrr]^(0.7){\Omega_{\xi,\chi}\cdot\oZ_{\xi,\chi}^\Theta} |!{[ur];[dr]}\hole
		&& &    \frak{M}^*_{n,\K} \ar@{=}[rd]  \\
		&  \pi_{\xi, \chi}\,\widehat{\otimes}\,I_{\eta,\chi}\ar[rrr]^{  \Omega_{\xi,\chi}\cdot\oZ_{\xi,\chi}^{\circ}} 
		&&& \frak{M}^*_{n,\K} \\
	}
\]
which implies Theorem \ref{mainthm} (a). Here for Case ($-$), note that
	\[
	\RH_{\mathrm{ct}}^0(\R_+^{\times}\backslash\GL_n(\K)^0;F_{\eta,\chi}^{\vee}\otimes I_{\eta,\chi})=\RH_{\mathrm{ct}}^0(\R_+^{\times}\backslash\GL_n(\K)^0;F_{\eta,\chi}^{\vee}\otimes \tau_{\eta,\chi}),
	\]
	where $\tau_{\eta,\chi}=\mathrm{im}(\mathrm{f}_{\eta,\chi})$ is the irreducible subrepresentation of $I_{\eta,\chi}$. To complete the proof of Theorem \ref{mainthm} (b), it remains to prove the following non-vanishing result for Case ($-$).

	\begin{prp}
		\label{prop:nonvanishing} For \emph{Case ($-$)},
		the archimedean modular symbol
		\[
		\wp_{\xi_0,\chi_0}:\RH_{\xi_0,\chi_0}\otimes\mathfrak{O}_{n,\K}\to\C
		\]
		 is non-vanishing when restricted to $\oH_{\xi_0,\chi_0}[\varepsilon]\otimes
            \frak O_{n,\K}$,
           for every character $\varepsilon=\varepsilon_1\otimes \varepsilon_2\otimes \varepsilon_3$ of $\pi_0(\K^\times)^3$ which occurs in 
           $\oH_{\xi_0,\chi_0}$ such that 
           $\varepsilon_1\cdot\varepsilon_2\cdot\varepsilon_3 = \sgn_{\K^\times}^{n-1}$.
	\end{prp}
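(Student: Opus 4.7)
The plan is to reduce the non-vanishing of $\wp_{\xi_0,\chi_0}$ on each eligible isotypic component to an explicit Rankin--Selberg integral computation, and then to carry out that computation using Whittaker model and Bruhat cell techniques analogous to those of \cite{Sun} and \cite{DX}. The first step is to observe, via Proposition \ref{RSnn} and the multiplicity-one theorem \cite[Theorem C]{SZ}, that $\oZ^\circ_{\xi_0,\chi_0}$ generates the one-dimensional Hom space
\[
\Hom_{\GL_n(\K)}\bigl(\pi_{0_n} \,\widehat{\otimes}\, \pi_{0_n} \,\widehat{\otimes}\, \tau_{\eta_0,\chi_0}, \mathfrak{M}_{n,\K}^*\bigr),
\]
so by the commutative diagram \eqref{thetadiagram} the modular symbol factors through a computable matrix-coefficient integral of $\oZ^\Theta_{\xi_0,\chi_0}$ on $\pi_{0_n} \,\widehat{\otimes}\, \pi_{0_n} \,\widehat{\otimes}\, \omega_{n,\chi_0}$.

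Next I would parametrize the cohomology concretely: since $\pi_{0_n}$ is unitary modulo its center, \cite[Proposition 9.4.3]{Wa88} identifies the relevant continuous cohomology with the $(\mathfrak{g},\RK)$-cohomology $\Hom_{\RK^0_{n,\K}}(\wedge^\bullet \mathfrak{p}_n, \pi_{0_n})$, and in Case $(-)$ the third factor is the one-dimensional space $(F_{\eta_0,\chi_0}^\vee \otimes \tau_{\eta_0,\chi_0})^{\RK^0_{n,\K}}$ since $c_{n,\K}=0$. The isotypic components under $\pi_0(\K^\times)^3$ are then explicit, and for each admissible $\varepsilon$ I can select a nonzero wedge-product cocycle $\omega_\varepsilon$. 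Unraveling the definition of $\wp_{\xi_0,\chi_0}$ through the restriction and cup product then expresses $\wp_{\xi_0,\chi_0}(\omega_\varepsilon \otimes o)$ as the evaluation of $\oZ^\Theta_{\xi_0,\chi_0}$ against explicit Whittaker vectors $f, f' \in \pi_{0_n}$ and a Schwartz test function $\Phi_\varepsilon \in \CS(\K^{1\times n})$ dictated by the $\RK$-type data of $\omega_\varepsilon$.

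The main obstacle will be showing that the resulting matrix coefficient integral is nonzero. My approach is to choose $f, f'$ supported near the open Bruhat cell with Whittaker functions of controlled sign, and to choose $\Phi_\varepsilon$ concentrated near a regular point of $\K^{1\times n}$, so that the integral unfolds to a product of one-dimensional Mellin transforms whose values are explicit ratios of gamma functions. The parity condition $\varepsilon_1 \varepsilon_2 \varepsilon_3 = \mathrm{sgn}_{\K^\times}^{n-1}$ is precisely what ensures the integrand transforms by the trivial character of $\R^\times/\R^\times_+$ under $\det$, ruling out automatic vanishing on the connected component $\GL_n(\K)^0$. The regularity of the infinitesimal characters involved should then guarantee that the resulting gamma-function expression does not vanish, completing the argument in a manner parallel to the treatments in \cite{Sun} and \cite{DX}.
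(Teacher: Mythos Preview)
Your reduction in the first two paragraphs is essentially the paper's own: identify $\oH_{\xi_0,\chi_0}$ (up to the one-dimensional third factor, since $c_{n,\K}=0$) with
\[
\Hom_{\RK^0_{n,\K}}\Bigl({\textstyle\bigwedge}^{b_{n,\K}}\p_n,\pi_{0_n}\Bigr)\otimes\Hom_{\RK^0_{n,\K}}\Bigl({\textstyle\bigwedge}^{t_{n,\K}}\p_n,\pi_{0_n}\Bigr),
\]
and observe that the modular symbol factors through a generator of the one-dimensional space $\Hom_{\GL_n(\K)}(\pi_{0_n}\,\widehat\otimes\,\pi_{0_n},\frak{M}_{n,\K}^*)$, so that non-vanishing amounts to this invariant functional being nonzero on the relevant minimal $\RK$-types. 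This matches the paper exactly.

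The gap is in your final paragraph. You first say (correctly) that $f,f',\Phi_\varepsilon$ are ``dictated by the $\RK$-type data,'' and then propose to \emph{choose} $f,f'$ supported near the open Bruhat cell and $\Phi_\varepsilon$ concentrated near a regular point. These two statements are incompatible: the vectors are determined up to scalar by the minimal $\RK$-types, and their Whittaker functions are specific special functions, not bump functions you may localize at will. The open-cell/regular-point technique shows that the Rankin--Selberg functional is nonzero \emph{somewhere}, which you already know; it does not address non-vanishing on the fixed $\RK$-type vectors, which is the entire content of the proposition. The parity condition $\varepsilon_1\varepsilon_2\varepsilon_3=\sgn_{\K^\times}^{n-1}$ is necessary to avoid a sign cancellation, but it is not by itself sufficient.

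The paper's argument at this point simply cites \cite[Proposition 5.0.1]{BR17}: since $\dim\Hom_{\GL_n(\K)}(\pi_{0_n}\,\widehat\otimes\,\pi_{0_n},\frak{M}_{n,\K}^*)=1$, the invariant functional is (up to scalar) the canonical Hermitian pairing coming from unitarizability of $\pi_{0_n}$, and its non-vanishing on the minimal $\RK$-type is then a positivity statement rather than an integral unfolding. This is also what underlies the strategies of \cite{Sun} and \cite{DX} in their respective settings. If you want to make the argument self-contained, you should replace the Bruhat-cell step by identifying the functional with the invariant inner product on the minimal $\RK$-type and invoking positive-definiteness.
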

	
	\begin{proof}
		This is essentially due to \cite[Proposition 5.0.1]{BR17} which can be proved by using the same strategy of \cite{DX, Sun}. We sketch the idea.
		
		Since $\pi_{0_n}$ is unitarizable, we have
		\[
		\begin{aligned}
			\RH_{\xi_0,\chi_0}=&\,\mathrm{Hom}_{\RK_{n,\K}^0}\left({\bigwedge}^{b_{n,\K}}(\mathfrak{gl}_{n,\K}/(\mathfrak{k}_{n,\K}\oplus\R))\otimes_{\R}\C,\pi_{0_n}\right)\\
			& \otimes\,\mathrm{Hom}_{\RK_{n,\K}^0}\left({\bigwedge}^{t_{n,\K}}(\mathfrak{gl}_{n,\K}/(\mathfrak{k}_{n,\K}\oplus\R))\otimes_{\R}\C,\pi_{0_n}\right).
		\end{aligned}
		\] 
		We need to prove the map
		\[
		\wp_{\xi_0,\chi_0}:\mathrm{Hom}_{\RK_{n,\K}^0}\left({\bigwedge}^{d_{n,n,\K}}(\mathfrak{gl}_{n,\K}/(\mathfrak{k}_{n,\K}\oplus\R))\otimes_{\R}\C,\pi_{0_n}\widehat{\otimes}\pi_{0_n}\right)\otimes\mathfrak{O}_{n,\K}\to\mathfrak{M}_{n,\K}^{\ast}
		\]
		is non-vanishing. It suffices to show that a basis element of 
		\[
		\mathrm{Hom}_{\GL_n(\K)}(\pi_{0_n}\,\widehat{\otimes}\,\pi_{0_n},\mathfrak{M}_{n,\K}^{\ast})
		\]
		does not vanish when restricted to certain minimal $K$-types of $\pi_{0_n}\,\widehat{\otimes}\,\pi_{0_n}$. Since 
		\[
		\dim\mathrm{Hom}_{\GL_n(\K)}(\pi_{0_n}\,\widehat{\otimes}\,\pi_{0_n},\mathfrak{M}_{n,\K}^{\ast})=1,
		\]
		this follows from the proof of \cite[Proposition 5.0.1]{BR17}.
	\end{proof}

	\subsection{Reduction to principal series representations}
	
	Recall that in Section \ref{sec:TP} we have defined a principal series representation  $I_\mu$ with a Whittaker functional $\wt{\lambda}_{\mu} \in \Hom_{\mathrm N_n(\K)}(I_\mu, \psi_{n, \K})$, and a unique 
	$
	p_\mu\in \Hom_{\GL_{n}(\K)}(I_\mu, \pi_\mu)
	$
	such that 
	\[
	\lambda_{\mu} \circ p_\mu =\wt{\lambda}_{\mu}.
	\] 
	We have also defined 
	$
	\check\imath_\mu\in \Hom_{\GL_n(\K)}(F_\mu\otimes I_{0_{n}},   I_\mu)
	$
	such that
	\[
	\wt{\lambda}_{\mu} \circ \check\imath_\mu \circ (v_\mu \otimes \cdot )  =  \wt{\lambda}_{0_{n}}
	\]
	and that the diagram \eqref{I-pi-dual} commutes. We have similar data for $\nu$. 
	
	For every $\chi'\in\widehat{\K^{\times}}$, put
	\[
	\begin{aligned}
		 \CI_{\xi, \chi'} &: =  I_\mu \, \widehat \otimes \, I_\nu  \,\widehat\otimes\,  \omega_{n,\chi'},  \\
		p_\xi&:=p_\mu \otimes p_\nu\otimes {\rm id}\in  \Hom_{\GL_n(\K)^3}( \CI_{\xi, \chi'},   \pi_{\xi,\chi'} ).
	\end{aligned}
	\]
	Define the normalized Rankin-Selberg integral
	\[
	\oZ_{\xi,\chi'}^{\diamond} \in \Hom_{\GL_{n}(\K)}\left( \CI_{\xi, \chi'},  \frak{M}_{n, \K}^*\right)
	\]
	as the composition 
	\[
	\CI_{\xi, \chi'}   \xrightarrow{p_\xi}  \pi_{\xi,\chi'}   \xrightarrow{ \oZ_{\xi,\chi'}^{\Theta}} \frak{M}_{n,\K}^*. 
	\]
	Then 
	\[
	\begin{aligned}
		&    \oZ^\diamond_{\xi,\chi'}( f, f',  \Phi;\mathrm{d}g)   =    \frac{1}{\oL(0, \pi_\mu\times\pi_\nu\times\chi')}    \\
		& \quad  \cdot \int_{ \RZ_n(\K)\mathrm N_{n}(\K)\backslash \GL_{n}(\K)} \langle\wt{\lambda}_{\mu},g.f\rangle\langle\wt{\lambda}'_{\nu},g.f'\rangle \Phi(e_n g) \chi'(\det g) \overline{\mathrm{d}}g, 
	\end{aligned}
	\]
	for $f\in I_\mu$, $f' \in I_\nu$, $\Phi\in \CS(\K^{1\times n})$, $\mathrm{d}g \in \frak{M}_{n,\K}$ and $\chi'\in\widehat{\K^{\times}}$ with $\mathrm{Re}(\chi')$ sufficiently large. 
	
	Put
	\[
    \begin{aligned}
	\delta_{\xi, \chi} := (\check\imath_\mu \otimes \check\imath_\nu\otimes \jmath_{\eta, \chi})\circ (\phi_{\xi, \chi}\otimes 1 \otimes \cdot) :  \CI_{\xi_0, \chi_0} \to F_{\xi,\chi}\otimes F_{\xi_0,\chi_0}^{\vee}\otimes \CI_{\xi_0, \chi_0}  \to  \CI_{\xi,\chi}.
    \end{aligned}
	\]

	In view of all the above, by the multiplicity one theorem \cite[Theorem C]{SZ}, there exists a unique constant $\Omega'_{\xi, \chi}\in \BC$  such that the diagram  
	\be \label{cube}
	\xymatrix{
		\CI_{\xi_0, \chi_0}  \ar[dd]_{ \delta_{\xi, \chi}}  \ar[rd]^(0.6){p_{\xi_0}} \ar[rrr]^{  \oZ_{\xi_0,\chi_0}^\diamond}  
		&&&  \frak{M}^*_{n,\K}  \ar@{=}[rd]  \ar^(0.7){\Omega_{\xi, \chi}' }[dd]|!{[dll];[dr]}\hole \\
		&  \pi_{\xi_0, \chi_0}  \ar[dd]_(0.4){\gamma_{\xi,\chi}}  \ar[rrr]^(0.4){  \oZ_{\xi_0,\chi_0}^{\Theta}} 
		&&&    \frak{M}^*_{n,\K}  \ar[dd]^{\Omega_{\xi, \chi}' } \\
		\CI_{\xi, \chi} \ar[rd]_{ p_\xi } \ar[rrr]^(0.7){\oZ_{\xi,\chi}^\diamond} |!{[ur];[dr]}\hole
		&& &    \frak{M}^*_{n,\K} \ar@{=}[rd]  \\
		&  \pi_{\xi, \chi}\ar[rrr]^{  \oZ_{\xi,\chi}^{\Theta}} 
		&&& \frak{M}^*_{n,\K} \\
	}
	\ee
	commutes.
	
	In the rest of this section we compute the constant $\Omega_{\xi, \chi}'$. The main ingredients of the computation are \cite{LLSS}  and the functional equation for Rankin-Selberg integrals in \cite{J09}.

	\subsection{Integral over the open orbit}
	
	\label{sec:lss}
	
	In this subsection we recall the main result of \cite{LLSS} for $\GL_n\times\GL_n$. 
	
	Let $\varrho=(\varrho_1,\ldots, \varrho_n) \in (\widehat{\K^\times})^n$, viewed as a character of $\bar{\mathrm B}_n(\K)$ as usual, and let
	\[
	I(\varrho) :=\Ind^{\GL_n(\K)}_{\bar{\rm B}_n(\K)} \varrho
	\]
	be the corresponding principal series representation of $\GL_n(\K)$. Let $\varrho' = (\varrho_1',\ldots, \varrho_{n}')\in (\widehat{\K^\times})^{n}$. We have a meromorphic family of unnormalized Rankin-Selberg integrals
	\[
	\oZ_{\chi'}
	\in \Hom_{\GL_{n}(\K)}\left(I(\varrho) \, \widehat{\otimes}\, I(\varrho') \, \widehat{\otimes}\, \omega_{n,\chi'}, \frak{M}_{n,\K}^*\right),
	\]
	such that
	\be \label{Zvarrho}
	\begin{aligned}
		&  \oZ_{\chi'}( f, f' ,\Phi;\mathrm{d}g)   \\
		= \ & \int_{\mathrm N_{n}(\K)\backslash \GL_{n}(\K)} \langle\lambda_{\varrho},g.f\rangle\langle \lambda'_{\varrho'},g.f' \rangle\Phi(e_n g) \chi'(\det g)\overline{\mathrm{d}}g, 
	\end{aligned}
	\ee
	for $f\in I(\varrho)$, $f' \in I(\varrho')$, $\Phi\in \CS(\K^{1\times n})$, $\mathrm{d}g \in \frak{M}_{n,\K}$ and $\chi'\in\widehat{\K^{\times}}$ with $\mathrm{Re}(\chi')$ sufficiently large, where $\lambda_{\varrho}\in \Hom_{\RN_n(\K)}(I(\varrho), \psi_{n,\K})$ and 
	$\lambda'_{\varrho'}\in \Hom_{\RN_{n}(\K)}(I(\varrho'), \overline{\psi_{n,\K}})$
	are defined in the same way as \eqref{la'}.
	
	Recall that
	\[
	z=\left(z_n,\left[\begin{array}{cc}
		z_{n-1} & 0\\
		0 & 1
	\end{array}\right],e_n\right)\in\GL_n(\Z)\times\GL_n(\Z)\times\Z^{1\times n},
	\]
	where $z_n\in\GL_n(\Z)$ is defined inductively in \eqref{zn}, and that the right action of $\GL_{n}(\K)$ on  $\CB_n(\K)\times \CB_n(\K)  \times \K^{1\times n} $ has a unique  open orbit 
	\be \label{openorbit}
	z.\GL_n(\K) = \big(\bar{\mathrm B}_n(\K) z_n\times \bar{\mathrm B}_{n}(\K)z_{n-1}\times \{e_n\}\big). \GL_{n}(\K),
	\ee
	where we have written $z$ for its image in $\CB_n(\K)\times\CB_n(\K)\times \K^{1\times n}$ by abuse of notation. 
	Note that  
	\[
	I(\varrho)\,\widehat{\otimes}\, I(\varrho') = \Ind^{\GL_n(\K)\times \GL_{n}(\K)}_{\bar{\rm B}_n(\K)\times \bar{\rm B}_{n}(\K)}(\varrho\otimes \varrho').
	\]
	Following \cite{LLSS}, we first formally define
	\[ 
	\Lambda_{\chi'}\in   \Hom_{\GL_{n}(\K)}\left(I(\varrho)\, \widehat \otimes \, I(\varrho')\, \widehat\otimes \, \omega_{n,\chi'},  \frak{M}_{n,\K}^*\right) 
	\]
	as the integral over the above open orbit. That is, 
	\be \label{lambda}
	\quad  \Lambda_{\chi'}(\Psi;\mathrm{d}g) 
	:=    \int_{\GL_{n}(\K)} \Psi(z.g)  \chi'(\det g) \od\! g
	\ee
	for $ \Psi\in  I(\varrho) \, \widehat\otimes \, I(\varrho') \, \widehat\otimes\, \omega_n$ and $\mathrm{d}g\in \frak{M}_{n,\K}$.
	
	Define
	\[
	\sgn(\varrho, \varrho' ,\chi'):=\prod_{i>k,  \, i+k\leq n} (\varrho_i \cdot \varrho'_k \cdot\chi')(-1),
	\]
	and a meromorphic function 
	\be \label{LSS:gamma}
	\gamma_{\psi_\K}(s, \varrho, \varrho',\chi'):=\prod_{i+k\leq n} \gamma(s, \varrho_i \cdot \varrho'_k\cdot\chi' , \psi_\K),
	\ee
	where 
	\[
	\gamma(s, \omega, \psi_\K) = \varepsilon(s,\omega, \psi_\K) \cdot \frac{\oL(1-s, \omega^{-1})}{\oL(s, \omega)}
	\]
	is the local gamma factor of a character $\omega \in \widehat{\K^\times}$, and $\varepsilon(s, \omega, \psi_\K)$ is the local epsilon factor\footnote{In 
		the convention of \cite{J09}, this would be $\varepsilon(s, \omega, \overline{\psi_\K}) = \omega(-1) \varepsilon(s, \omega, \psi_\K)$.}, defined following \cite{T, J1, Ku}.
	For later use, we also define 
	\[
	\varepsilon_{\psi_\K}(s, \varrho, \varrho',\chi'):=\prod_{i+k\leq n} \varepsilon(s, \varrho_i \cdot \varrho'_k\cdot\chi', \psi_\K).
	\]
	Finally define the meromorphic function 
	\be \label{LSS:Gamma}
	\Gamma_{\psi_\K}(\varrho, \varrho' ,\chi'):=\sgn(\varrho, \varrho',\chi' )  \cdot \gamma_{\psi_\K}(0, \varrho, \varrho',\chi' ).
	\ee
	
	Consider the complex manifold 
	\[
	\mathcal{M}:=(\widehat{\K^\times})^n \times (\widehat{\K^\times})^{n}\times\widehat{\K^\times}
	\]
	and its nonempty open subset 
	\[
	\Omega:=\Set{ (\varrho, \varrho',\chi') \in \mathcal{M} | \begin{aligned} & \mathrm{Re}(\varrho_i)+\mathrm{Re}(\varrho'_k)+\mathrm{Re}(\chi')<1  \textrm{ whenever } i+k\leq n,\\
		& \mathrm{Re}(\varrho_i)+\mathrm{Re}(\varrho'_k)+\mathrm{Re}(\chi')>0 \textrm{ whenever } i+k> n
	\end{aligned}}.
	\]

	\begin{thmp} \cite[Theorem 1.5 (a)]{LLSS} \label{thm:LSS}
		Assume that $(\varrho,\varrho',\chi')\in\Omega$. Then the integral \eqref{lambda} converges absolutely, and 
		\be \label{eq:LSS}
		\Lambda_{\chi'} ( \Psi;\mathrm{d}g)   = \Gamma_{\psi_\K}(\varrho, \varrho',\chi' )  \cdot \oZ_{\chi'}( \Psi;\mathrm{d}g). 
		\ee
	\end{thmp}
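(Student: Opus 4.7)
The plan is to first establish absolute convergence on $\Omega$, then use multiplicity one to reduce the identity to computing a scalar constant, and finally compute that constant by choosing convenient test vectors and unfolding the resulting integrals into a product of local Tate integrals that assemble into the claimed $\gamma$-factors.

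For \emph{convergence} on $\Omega$, I would use the Iwasawa decomposition $g=uak$ with $u\in\RN_n(\K)$, $a$ in the diagonal torus, and $k\in\RK_{n,\K}$, together with the Bruhat decomposition that identifies $z.\GL_n(\K)$ as the unique open orbit in $\CB_n(\K)\times\CB_n(\K)\times\K^{1\times n}$. After substituting this into \eqref{lambda}, one factors $\Psi(z.g)$ into (i) the two principal-series sections, which along $z.ak$ transform by $\varrho(a)\varrho'(a)\cdot\delta(a)^{1/2}$ after accounting for the modular character, and (ii) the Schwartz function $\Phi(e_n\cdot ak)$. The diagonal integral decouples into $n$ one-dimensional Mellin integrals in the entries $a_i$ of $a$; the exponent appearing in the $i$-th such integral is a sum of $\varrho_i,\,\varrho'_k,\,\chi'$ for the indices pairing with that coordinate under the orbit structure, plus shifts coming from $\delta^{1/2}$. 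The strict inequalities defining $\Omega$ are precisely the conditions needed for each Mellin integral to converge at both $0$ and $\infty$, and the $K$-integration is harmless because $\Psi$ is smooth and of rapid decay in its third argument.

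Next, both $\Lambda_{\chi'}$ and $\oZ_{\chi'}$ lie in $\Hom_{\GL_n(\K)}(I(\varrho)\,\widehat\otimes\,I(\varrho')\,\widehat\otimes\,\omega_{n,\chi'},\frak M_{n,\K}^\ast)$, which by the multiplicity-one theorem \cite{SZ} is one-dimensional for generic $(\varrho,\varrho',\chi')$. Hence there is a meromorphic function $C(\varrho,\varrho',\chi')$ with $\Lambda_{\chi'}=C\cdot\oZ_{\chi'}$, and it suffices to compute $C$ on a single non-vanishing test vector.

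To compute $C$, I would take $\Psi=f\otimes f'\otimes\Phi$ where $\Phi\in\CS(\K^{1\times n})$ is a bump, and $f,f'$ are supported in small neighborhoods of $z_n$ and of $\mathrm{diag}(z_{n-1},1)$ respectively in $\bar\RB_n(\K)\backslash\GL_n(\K)$. On the $\Lambda$ side, \eqref{openorbit} lets one express the integral as a direct product of $n$ Tate-type one-dimensional integrals in the $a_i$. On the $\oZ$ side, the Whittaker functionals $\lambda_\varrho,\lambda'_{\varrho'}$ are given by convergent Jacquet integrals over $\RN_n(\K)$ against $\overline{\psi_{n,\K}}$, and the same open-orbit choice makes these Jacquet integrals unfold so that the $\RN_n(\K)\backslash\GL_n(\K)$-integral in \eqref{Zvarrho} becomes, after Fubini and a change of variable to put $g$ into Iwasawa form, a product of the \emph{dual} Tate integrals. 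Each paired factor $\int\varrho_i\varrho'_k\chi'|\cdot|^s\,\od\!^\times a$ vs.\ $\int(\varrho_i\varrho'_k\chi')^{-1}|\cdot|^{1-s}\,\od\!^\times a$ differs by exactly $\gamma(0,\varrho_i\varrho'_k\chi',\psi_\K)$ via Tate's local functional equation, yielding the product in \eqref{LSS:gamma}. The sign $\sgn(\varrho,\varrho',\chi')$ arises from the substitution $a\mapsto -a$ needed to convert $\psi_\K$ into $\overline{\psi_\K}$ (equivalently, from the reflections in the Weyl element $w_n$ needed to pass between the two Borels).

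The main obstacle is the combinatorial bookkeeping: one must verify that, after unfolding, the pairs $(i,k)$ for which a $\gamma$-factor is produced are exactly those with $i+k\leq n$, and that no extra factors contribute. This hinges on the recursive definition \eqref{zn} of $z_n$ — the inductive matrix identity is precisely what ensures that the Jacquet integral defining $\lambda_\varrho$ evaluated on $z_n$-supported sections collapses only in the coordinates indexed by pairs with $i+k\leq n$, the remaining coordinates giving convergent ``non-transformed'' Mellin integrals that cancel between the two sides. An induction on $n$ using the block structure of $z_n$ in \eqref{zn} is the cleanest way to carry out this bookkeeping, with the base case $n=1$ being the classical local functional equation for $\GL_1$.
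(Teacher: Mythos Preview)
The paper does not prove this theorem; it is quoted from \cite[Theorem~1.5(a)]{LLSS} and used as a black box. So there is no proof here to compare against, only the original argument in \cite{LLSS}.

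Your outline is broadly in the right spirit---convergence via Iwasawa coordinates, then an inductive unfolding using the block-recursive structure \eqref{zn} of $z_n$ and Tate's local functional equation---and this is indeed how \cite{LLSS} proceeds. Two remarks, however.

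First, the multiplicity-one step is shaky as stated. The result of \cite{SZ} applies to \emph{irreducible} Casselman--Wallach representations, while $\omega_{n,\chi'}=\CS(\K^{1\times n})\otimes(\chi'\circ\det)$ is not irreducible. To invoke \cite{SZ} you must first argue that any $\GL_n(\K)$-invariant functional on $I(\varrho)\,\widehat\otimes\,I(\varrho')\,\widehat\otimes\,\omega_{n,\chi'}$ factors through the quotient $\omega_{n,\chi'}\to I_{\eta,\chi'}$ (via the central-character constraint and Proposition~\ref{thmX}), and then restrict to parameters where all three factors $I(\varrho)$, $I(\varrho')$, $I_{\eta,\chi'}$ are irreducible. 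This is fixable for generic $(\varrho,\varrho',\chi')$ but is not the one-line appeal you wrote.

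Second, and more to the point, the multiplicity-one detour is redundant. Your step~3 already proposes to unfold both $\Lambda_{\chi'}$ and $\oZ_{\chi'}$ explicitly on well-chosen test data and match the resulting Tate integrals; once that computation is done for \emph{arbitrary} $\Psi$ (not just a single test vector) via the inductive structure of $z_n$, you have proved \eqref{eq:LSS} directly, without ever needing to know the $\Hom$ space is one-dimensional. This direct route is what \cite{LLSS} does: the identity is established by an honest unfolding and induction on $n$, with the sign $\sgn(\varrho,\varrho',\chi')$ and the index set $\{i+k\le n\}$ emerging from the combinatorics of the recursion \eqref{zn}, rather than by a proportionality-plus-test-vector argument.
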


	Put
	\[
	\left(I(\varrho) \,\widehat\otimes\, I(\varrho') \, \widehat\otimes \, \omega_n\right)^\sharp:=\Set{\Psi\in  I(\varrho)\, \widehat\otimes \, I(\varrho') \, \widehat\otimes \, \omega_n | \Psi |_{z. \GL_{n}(\K)}\in
	\mathcal{S}(z. \GL_{n}(\K)) }.
	\]
	Then for every $\varrho\in (\widehat{\K^\times})^n$, $\varrho'\in (\widehat{\K^\times})^{n}$ and $\Psi \in  \left(I(\varrho) \,\widehat\otimes\, I(\varrho') \, \widehat\otimes \, \omega_n\right)^\sharp$, the integral \eqref{lambda} converges absolutely and is an entire function of $\chi'\in\widehat{\K^\times}$. We have the following consequence of Theorem \ref{thm:LSS}, which can be proved in the similar way as that of \cite[Corollary 3.3]{LLS24}.
	
	\begin{corp} \label{cor:LSS}
		For every $\varrho\in (\widehat{\K^\times})^n$, $\varrho'\in (\widehat{\K^\times})^{n}$ and $\Psi \in  \left(I(\varrho) \widehat\otimes I(\varrho')\widehat\otimes\omega_n\right)^\sharp$, the equality \eqref{eq:LSS} holds as entire functions of $\chi'\in\widehat{\K^\times}$.
	\end{corp}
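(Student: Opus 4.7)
The plan is to deduce the corollary from Theorem \ref{thm:LSS} by a standard meromorphic continuation argument in all three parameters $(\varrho,\varrho',\chi')$, mimicking the proof of \cite[Corollary 3.3]{LLS24}.

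First, I would verify that for a fixed $\Psi\in (I(\varrho)\,\widehat\otimes\, I(\varrho')\,\widehat\otimes\, \omega_n)^\sharp$, the integral defining $\Lambda_{\chi'}(\Psi;\mathrm{d}g)$ in \eqref{lambda} is absolutely convergent and entire in $\chi'\in\widehat{\K^\times}$. This is because the open orbit $z.\GL_n(\K)$ is homeomorphic to $\GL_n(\K)$ (the stabilizer being trivial), and the Schwartz condition $\Psi|_{z.\GL_n(\K)}\in \mathcal{S}(z.\GL_n(\K))$ translates \eqref{lambda} into a Tate-type integral over $\GL_n(\K)$ of a Schwartz function twisted by $\chi'\circ\det$, whence the absolute convergence and the entire dependence on $\chi'$.

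The next step is to upgrade Theorem \ref{thm:LSS}, which is stated for $\Psi\in I(\varrho)\,\widehat{\otimes}\,I(\varrho')\,\widehat{\otimes}\,\omega_n$ in the open set $\Omega\subset \mathcal{M}$, to a statement that is meromorphic over the whole parameter space $\mathcal{M}$. For this I would use the compact picture of the principal series: fixing a maximal compact subgroup $\RK_{n,\K}$ of $\GL_n(\K)$, each induced representation $I(\varrho)$ has a canonical realization on a space of smooth functions on $\RK_{n,\K}$ that is independent of $\varrho$, with the group action depending holomorphically on $\varrho$. This allows one to produce, starting from any given $\Psi_0\in (I(\varrho_0)\,\widehat\otimes\, I(\varrho_0')\,\widehat\otimes\, \omega_n)^\sharp$, a flat family $\Psi_{(\varrho,\varrho')}$ which is holomorphic in $(\varrho,\varrho')$, specializes to $\Psi_0$ at $(\varrho_0,\varrho'_0)$, and whose restriction to $z.\GL_n(\K)$ is a fixed Schwartz function (independent of $(\varrho,\varrho')$). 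For such a family, the map
\[
(\varrho,\varrho',\chi')\mapsto \Lambda_{\chi'}(\Psi_{(\varrho,\varrho')};\mathrm{d}g)
\]
is holomorphic on all of $\mathcal{M}$ (by the argument of the first step, applied uniformly), while
\[
(\varrho,\varrho',\chi')\mapsto \Gamma_{\psi_\K}(\varrho,\varrho',\chi')\cdot \oZ_{\chi'}(\Psi_{(\varrho,\varrho')};\mathrm{d}g)
\]
is meromorphic on $\mathcal{M}$ by the meromorphic continuation of Rankin-Selberg integrals (\cite{J09}) and of the gamma factors. By Theorem \ref{thm:LSS} these two functions agree on the non-empty open subset $\Omega\subset \mathcal{M}$, and since $\mathcal{M}$ is connected, they agree as meromorphic functions on all of $\mathcal{M}$.

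Finally, restricting to the slice $\{(\varrho_0,\varrho_0')\}\times \widehat{\K^\times}\subset \mathcal{M}$ yields the equality \eqref{eq:LSS} as meromorphic functions of $\chi'$ for the originally given $\Psi_0$. The first step already shows the left-hand side is entire in $\chi'$, so the right-hand side must be entire as well, which is exactly the content of the corollary. The main technical point is the construction of the flat holomorphic family preserving the Schwartz-on-open-orbit condition; once this is in place, the rest is a standard application of analytic continuation.
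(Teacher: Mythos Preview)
Your approach is essentially the paper's: build a holomorphic family of sections via the compact picture, observe that $\Lambda_{\chi'}$ is holomorphic while the right-hand side of \eqref{eq:LSS} is meromorphic (by \cite{J09} and the known meromorphy of the gamma factors), and continue analytically from $\Omega$. The one slip is the claim that $\mathcal{M}$ is connected: in fact $\widehat{\K^\times}$ has connected components indexed by $\{0,1\}$ when $\K\cong\R$ and by $\Z$ when $\K\cong\C$, so $\mathcal{M}$ has infinitely many components. The paper therefore fixes a connected component $\CM^\circ$ and observes that $\Omega\cap\CM^\circ$ is nonempty and open (the inequalities defining $\Omega$ involve only the real parts of the characters, which vary freely within each component), then runs your analytic continuation argument on $\CM^\circ$; since this works for every component, the identity holds on all of $\mathcal{M}$ and one may restrict to the slice through $(\varrho,\varrho')$.
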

	
	\begin{proof} 
		Let $\CC$ and $\CC'$ be the connected components of $(\widehat{\K^\times})^n$ containing $\varrho$ and $\varrho'$ respectively.
		Write $\RK_\K:=\RK_{n,\K}\times \RK_{n,\K}$.  Define
		\[
		C^\infty_{\CC, \CC'}(\RK_\K):=\Set{ f \in C^\infty(\RK_\K) | \begin{array}{l} f(b\cdot k) = ( \varrho \otimes \varrho')(b) \cdot f(k), \\
			\textrm{for all } b\in \RK_\K\cap (\bar{\mathrm B}_n(\K) \times \bar{\mathrm B}_{n}(\K)), \ k\in \RK_\K
		\end{array}},
		\] 
		which only depends on $\CC$ and $\CC'$, not on the particular elements $\varrho \in \CC$ and $\varrho'\in \CC'$.

		Consider the natural  map 
		\[
		\RK_\K\times \K^{1\times n} \to (\bar{\mathrm B}_n(\K) \times \bar{\mathrm B}_{n}(\K) )\backslash (\GL_n(\K) \times \GL_{n}(\K))\times \K^{1\times n},
		\]
		which is surjective by the Iwasawa decomposition. Let $\CK_\K^\sharp \subset \RK_\K\times \K^{1\times n}$ be the pre-image of the open orbit \eqref{openorbit} under the above map. Fix $f \in C^\infty_{\CC, \CC'}(\RK_\K)\,\widehat\otimes \, \omega_n$ such that 
		$f|_{\CK_\K^\sharp}\in \CS(\CK_\K^\sharp)$. Then there is a unique  $\Psi_{\varrho,\varrho'}\in   \left(I(\varrho) \,\widehat\otimes\, I(\varrho') \, \widehat\otimes \, \omega_n\right)^\sharp$ such that 
		\[
		\Psi_{\varrho, \varrho'}|_{\RK_\K\times\K^{1\times n}} = f.
		\]
		
		Let $\CM^\circ$ be a connected component of $\CM$ contained in  $\CC\times \CC'\times\widehat{\K^\times}$.  The integral  
		$\Lambda_{\chi'}(\Psi_{\varrho,\varrho'};\mathrm{d}g)$ is  holomorphic when $(\varrho, \varrho',\chi')$ varies in $\CM^\circ$. 
		By \cite[Section 8.1]{J09}, we also have that
		\[
		\Gamma_{\psi_\K}( \varrho, \varrho',\chi' )  \cdot \oZ_{\chi'}( \Psi_{\varrho,\varrho'} ;\mathrm{d}g)  
		\]
		is meromorphic on $\CM^{\circ}$.  Since the equality 
		\[
		\Lambda_{\chi'}(\Psi_{\varrho,\varrho'};\mathrm{d}g)  =  \Gamma_{\psi_\K} ( \varrho, \varrho',\chi')  \cdot \oZ_{\chi'}( \Psi_{\varrho,\varrho'};\mathrm{d}g) 
		\]
		holds on $\Omega\cap \CM^\circ$, which is nonempty and open, it holds over all $\CM^\circ$ by the uniqueness of holomorphic continuation. 
	\end{proof}

	\subsection{Functional equation}
	
	We recall the functional equation for Rankin-Selberg integrals following \cite{J09}, and we will give a representation-theoretic interpretation.  For $\varrho=(\varrho_1, \ldots, \varrho_n) \in (\wh{\K^\times})^n$ as above, put
	\[
	\wh{\varrho}: = (\varrho_n^{-1},\ldots, \varrho_1^{-1}) \in (\wh{\K^\times})^n. 
	\]
	Recall that $g^\tau = {}^t g^{-1}$ for $g\in \GL_n(\K)$. For $f\in C^\infty(\GL_n(\K))$, define $\wt{f}\in C^\infty(\GL_n(\K))$ as
	\[
	\wt{f}(g):= f(w_n g^\tau). 
	\] 
	Then $\wt{f}\in I(\wh{\varrho})$ for any $f\in I(\varrho)$, and it is easy to check that
	\be \label{ftilde}
	\wt{g. f}= g^\tau.\wt{f},\quad g\in \GL_n(\K).
	\ee
	
	\begin{lemp} \label{lem:whit}
		For $f\in I(\varrho)$, define $W_f\in C^\infty(\GL_n(\K))$ by
		$
		W_f(g):= \lambda_{\varrho}(g.f). 
		$
		Then
		$
		\wt{W_f}(g) = \lambda'_{\wh\varrho}(g.\wt{f}).
		$
	\end{lemp}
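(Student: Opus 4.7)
The plan is to remove the dependence on $g$ via the intertwining rule \eqref{ftilde}, and then verify the resulting Whittaker-integral identity by a direct change of variables on $\mathrm N_n(\K)$.

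First, unwinding the definitions gives $\wt{W_f}(g) = W_f(w_ng^\tau) = \lambda_\varrho(w_n.(g^\tau.f))$, while \eqref{ftilde} yields $g.\wt f = \wt{g^\tau.f}$. Setting $h:=g^\tau.f\in I(\varrho)$ therefore reduces the lemma to the $g$-free identity
\[
\lambda_\varrho(w_n . h) \;=\; \lambda'_{\wh\varrho}(\wt h), \qquad h\in I(\varrho).
\]
Both sides are continuous linear functionals on $I(\varrho)$, so by a standard density/extension argument (parallel to the characterization of $\wt{\lambda}_\mu$ via \eqref{la'}) it will be enough to check the equality on the dense subspace of $h$ for which both $h|_{\mathrm N_n(\K)}$ and $\wt h|_{\mathrm N_n(\K)}$ are Schwartz.

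For such $h$, the two sides unfold via \eqref{la'} to absolutely convergent integrals over $\mathrm N_n(\K)$: the left side becomes $\int_{\mathrm N_n(\K)} h(uw_n)\,\overline{\psi_{n,\K}}(u)\,\mathrm d u$, and the right side, after rewriting $w_nu^\tau = u'w_n$ with $u':=w_nu^\tau w_n^{-1}$, becomes $\int_{\mathrm N_n(\K)} h(u'w_n)\,\psi_{n,\K}(u)\,\mathrm d u$. The crux is then the change of variable $u\mapsto u'$: this is an involutive algebraic automorphism of the unipotent group $\mathrm N_n$ (the composition of the transpose-inverse isomorphism $\mathrm N_n\to\ov{\mathrm N_n}$ with $w_n$-conjugation), which one expects to preserve Haar measure and to send the sum of superdiagonal entries to its negative; this would give $\psi_{n,\K}(u)=\overline{\psi_{n,\K}}(u')$ and, after substitution, identify the two integrals.

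The main obstacle I anticipate is the density/continuity step: one must verify that the subspace of $h\in I(\varrho)$ for which both integrals simultaneously converge absolutely is dense, and that both functionals extend continuously from this subspace to the whole representation so as to match. A standard Bruhat-cell argument — supports concentrated on $\ov{\mathrm B}_n w_n\mathrm N_n(\K)$ work for both sides — together with the continuity built into the Casselman--Wallach topology should suffice. The computational verification of the Haar-invariance of $\phi$ and the character transformation is routine, obtained by inspecting the induced map on the superdiagonal coordinates $u_{i,i+1}\mapsto -u_{n-i,n+1-i}$.
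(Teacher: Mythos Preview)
Your approach is correct and essentially identical to the paper's: both reduce to a $g$-free Whittaker identity and verify it by the substitution $u \mapsto w_n u^\tau w_n$ on $\RN_n(\K)$. Your anticipated obstacle dissolves once you note that the correct Schwartz condition for unfolding the left side is on $u\mapsto h(uw_n)$ (not on $h|_{\RN_n}$), which is \emph{equivalent} to the condition $\wt h|_{\RN_n}\in\CS(\RN_n(\K))$ for the right side via that same substitution, so a single dense subspace suffices --- exactly as in the paper.
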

	
	\begin{proof}
		We first prove the following  identity 
		\be \label{whit-id}
		\lambda'_{\wh\varrho}(w_n.\wt{f}) = \lambda_{\varrho}(f). 
		\ee
		Clearly it suffices to prove \eqref{whit-id} for $f\in I(\varrho)$ satisfying that $f\vert_{\RN_n(\K)}\in \CS(\RN_n(\K))$, in which case (see \eqref{la'})
		\[
		\lambda_{\varrho}(f) = \int_{\RN_n(\K)} f(u) \overline{\psi_{n,\K}}(u)\od\!u.
		\]
		Then $w_n.\wt{f} \vert_{\RN_n(\K)}\in \CS(\RN_n(\K))$, hence we find that 
		\[
        \begin{aligned}
		\lambda'_{\wh\varrho}(w_n.\wt{f}) &= \int_{\RN_n(\K)} f(w_n u^\tau  w_n) \psi_{n,\K}(u)\od\!u\\
        &= \int_{\RN_n(\K)} f(u) \overline{\psi_{n,\K}}(u)\od\!u = \lambda_{\varrho}(f).
        \end{aligned}
		\]
		This proves \eqref{whit-id}. Using this and \eqref{ftilde}, and noting that $w_n^\tau=w_n=w_n^{-1}$, we obtain that 
		\[
		\wt{W_f}(g) = \lambda_{\varrho}( w_n g^\tau.f) = \lambda'_{\wh\varrho}(g.\wt{f}).
		\]
	\end{proof}
	
	Recall that for $\Phi\in\CS(\K^{1\times n})$ we have the Fourier transform 
	\[
	\wh\Phi = \CF_{\psi_\K}(\Phi) \in \CS(\K^{n\times 1}),
	\]
	and $\CF_{\psi_\K}: \CS(\K^{1\times n})\to \CS(\K^{n\times 1})$ is a $\GL_n(\K)$-isomorphism between $\omega_n$ and $\check{\omega}_n$. Define 
	\[
	\wt\oZ_{\chi'} 
	\in \Hom_{\GL_{n}(\K)}\left(I(\varrho) \, \widehat{\otimes}\, I(\varrho') \, \widehat{\otimes}\, \check{\omega}_{n,\chi'}, \frak{M}_{n,\K}^{\ast} \right)
	\]
	such that
	\[
	\begin{aligned}
		&  \wt{\oZ}( f, f' ,\wh\Phi;\chi',\mathrm{d}g)   \\
		= \ & \int_{\mathrm N_{n}(\K)\backslash \GL_{n}(\K)}\langle \lambda'_{\wh\varrho},g.\wt{f} \rangle\langle \lambda_{\wh{\varrho'}} ,g. \wt{f'} \rangle\wh\Phi({}^t g\cdot  {}^te_n ) \chi'(\det(g))^{-1}|\det(g)|_{\K}  \overline{\mathrm{d}}g
	\end{aligned}
	\]
	when the integral is convergent. By \cite[Theorem 2.1]{J09} and Lemma \ref{lem:whit}, we have the functional equation 
	\be \label{FE}
	\begin{aligned}
		&\wt{\oZ}_{\chi'}( f, f', \wh\Phi;\mathrm{d}g) \\
		= \, &\omega_{\varrho}(-1) \omega_{\varrho'}(-1)^n \cdot \gamma(0, \varrho\times\varrho'\times\chi', \psi_\K)\cdot \oZ_{\chi'}(f, f', \Phi;\mathrm{d}g),
	\end{aligned}
	\ee
	where $\omega_\varrho$, $\omega_{\varrho'}$ denote the central characters of $I(\varrho)$, $I(\varrho')$ respectively, and
	\[
	\gamma(s, \varrho\times\varrho'\times\chi', \psi_\K): = \prod_{i, k =1,2,\dots, n} \gamma(s, \varrho_i \cdot \varrho'_k\cdot\chi', \psi_\K).
	\]
	
	It is clear that if we replace $\psi_\K$ by $\overline{\psi_\K}$ and define 
	\[
	\oZ_{{\chi'}^{-1}|\cdot|_{\K}}\in \Hom_{\GL_{n}(\K)}\left(I(\wh\varrho) \, \widehat{\otimes}\, I(\wh{\varrho'}) \, \widehat{\otimes}\, \omega_{n,\chi'^{-1}|\cdot|_{\K}} , \frak{M}_{n,\K}^{\ast} \right)
	\]
	following \eqref{Zvarrho}, and define ${}^t\wh\Phi\in \CS(\K^{1\times n})$ by 
	\[
	{}^t\wh\Phi(x) := \widehat{\Phi}({}^tx),\quad x\in \K^{1\times n},
	\]
	then 
	\be \label{wtZ}
	\wt\oZ_{\chi'}( f,f',\wh\Phi;\mathrm{d}g) = \oZ_{{\chi'}^{-1}|\cdot|_{\K}}(\wt{f}, \wt{f'}, {}^t\wh\Phi ;\mathrm{d}g).
	\ee

	\subsection{Four commutative diagrams}
	
	We consider Case ($\pm$) in this and the next subsection. We now specify the above discussion  to the principal series representations $I_\mu$ and $I_\nu$.  
	Define $\varrho^\mu = (\varrho^\mu_1, \ldots, \varrho^\mu_n)\in (\widehat{\K^\times})^n$, where
	\[
	\varrho^\mu_i  :=  \iota^{\tilde{\mu}_i^{\iota}}\bar{\iota}^{\tilde{\mu}_i^{\bar{\iota}}}\in \widehat{\K^\times}, \quad i =1,2,\ldots, n,
	\]
	so that
	$I_\mu = I(\varrho^\mu)$ and $\wt{\lambda}_{\mu} = \lambda_{\varrho^\mu}$
	in the notation of Section \ref{sec:lss}. Recall that the half-integers $\tilde{\mu}^{\iota'}_i$, $i=1,2,\ldots, n$, $\iota'\in \CE_\K$, are determined by $\mu$ as in \eqref{lmu}.  Put
	\[
	\wh\mu := (-\mu_n^\iota,\ldots, -\mu_1^\iota; -\mu_n^{\bar\iota}, \ldots, -\mu_1^{\bar\iota}) \in (\BZ^n)^{\CE_\K}.
	\]
	Then it holds that
	\[
	\wh{\varrho^\mu} = \varrho^{\wh\mu}.
	\]
	Likewise, define $\varrho^\nu = (\varrho^\nu_1,\ldots, \varrho^\nu_{n})\in (\widehat{\K^\times})^{n}$ so that $I_\nu = I(\varrho^\nu)$, $\wt\lambda'_{\nu} = \lambda_{\varrho^\nu}$.
	
	We will factorize the map $\delta_{\xi, \chi}$ in \eqref{cube} using \eqref{alpha-fac} and Proposition \ref{prop:phixichi}. To this end, we introduce the following elements of $(\BZ^n)^{\CE_\K}$:
	\[
	\varsigma := (\mu^\iota_1+\chi_{\iota},\ldots, \mu^\iota_n+\chi_{\iota}; 0, \ldots, 0),\quad \upsilon := (\nu^\iota_1,\ldots, \nu^\iota_n; 0,\ldots, 0),
	\]
	which depend on $\mu$, $\nu$ and $\chi$. 
	Define half-integers $\tilde\varsigma^{\iota'}_i$ and  $\tilde{\upsilon}^{\iota'}_i$, $i=1, 2,\ldots, n$, $\iota'\in\CE_\K$ using $\varsigma$ and $\upsilon$ respectively as in \eqref{lmu}, which further give the elements
	\[
	\varrho^\varsigma=(\varrho^\varsigma_1,\ldots, \varrho^\varsigma_n), \ \varrho^\upsilon = (\varrho^\upsilon_1,\ldots, \varrho^\upsilon_n) \in (\wh{\K^\times})^n
	\]
	as above. Now we can realize $C_{\eta,\chi}\cdot\delta_{\xi, \chi}$ as the following composition map evaluated at the character $\chi'=\chi_0$:
	\begin{equation} \label{delta-fac}
		\begin{aligned}
			\CI_{\xi_0, \chi'}  & \xrightarrow{\phi_{\xi,\chi}^\iota\cdot} \CI_{\varsigma, \upsilon, \chi'}:= I(\varrho^\varsigma)\,\wh\otimes \, I(\varrho^\upsilon) \, \wh\otimes\, \omega_{n,\chi'} \\ 
			& \xrightarrow{\CF_{\psi_\K}} \check{\CI}_{\varsigma, \upsilon, \chi'}:= I(\varrho^\varsigma)\,\wh\otimes \, I(\varrho^\upsilon) \, \wh\otimes\, \check{\omega}_{n,\chi'} \\
			& \xrightarrow{\phi_{\xi, \chi}^{\bar\iota}\cdot} \check{\CI}_{\xi, \chi'\chi}:=  I(\varrho^\mu)\,\wh\otimes \, I(\varrho^\nu) \, \wh\otimes\, \check{\omega}_{n,\chi'\chi} \\
			& \xrightarrow{\CF_{\overline{\psi_\K}}} \CI_{\xi, \chi'\chi} =  I(\varrho^\mu)\,\wh\otimes \, I(\varrho^\nu) \, \wh\otimes\, \omega_{n,\chi'\chi},
		\end{aligned}
	\end{equation}
	where we write 
	$\phi_{\xi,\chi}=\phi_{\xi,\chi}^{\iota}\otimes\phi_{\xi,\chi}^{\overline{\iota}}$ and the first and third arrows are multiplications by the functions $\phi_{\xi, \chi}^\iota$ and $\phi_{\xi, \chi}^{\bar\iota}$  respectively. Here we recall that the normalizing constant $C_{\eta,\chi}$ is given by \eqref{normalizingfactor}.
	
	In view of \eqref{delta-fac}, to evaluate the constant $\Omega_{\xi, \chi}'$ in \eqref{cube} , we only need to study the commutative diagrams in the next four lemmas. 
	
	\begin{lemp} \label{lem:CD1}
		The diagram 
		\[
		\begin{CD}
			\CI_{\xi_0, \chi'}^\sharp 
			@> \oZ_{\chi'} >> \mathfrak{M}^*_{n,\K} \\
			@ V  \phi_{\xi, \chi}^\iota\cdot  V V          @  V V C_1(\chi')  V  \\
			\CI_{\varsigma, \upsilon, \chi'}^\sharp @> \oZ_{\chi'} >>  \mathfrak{M}^*_{n,\K}
		\end{CD}
		\]
		commutes, where
		\[
		C_1(\chi') = \frac{\Gamma_{\psi_\K}( \varrho^{0_n}, \varrho^{0_n},\chi' ) }{\Gamma_{\psi_\K}(\varrho^\varsigma, \varrho^\upsilon,\chi') }.
		\]
	\end{lemp}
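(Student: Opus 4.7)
The plan is to compute each horizontal Rankin--Selberg integral $\oZ_{\chi'}$ by passing through the open-orbit integral $\Lambda_{\chi'}$ of Section~\ref{sec:lss}, and to exploit the fact that on the open orbit $z.\GL_n(\K)$ the multiplication operator $\phi_{\xi,\chi}^\iota\,\cdot$ acts as the identity. To see this, recall from Proposition~\ref{prop:phixichi} that $\phi_{\xi,\chi}$ is invariant under the diagonal right action of $\GL_n(\K\otimes_\R\C)$. I fix the factorization $\phi_{\xi,\chi}=\phi_{\xi,\chi}^\iota\otimes\phi_{\xi,\chi}^{\bar\iota}$ so that each factor takes value $1$ at the corresponding open-orbit representative, consistently with $\phi_{\xi,\chi}(\tilde z)=1$. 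Then $\phi_{\xi,\chi}^\iota$, viewed as an algebraic function on $\GL_n(\C_\iota)\times\GL_n(\C_\iota)\times\C_\iota^{1\times n}$, is $\GL_n(\C_\iota)$-invariant under the diagonal action, and hence identically $1$ on the open orbit $z^\iota.\GL_n(\C_\iota)$.

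First I would verify that pointwise multiplication by $\phi_{\xi,\chi}^\iota$ sends $\CI_{\xi_0,\chi'}^\sharp$ into $\CI_{\varsigma,\upsilon,\chi'}^\sharp$. The required shift in the Borel transformation law under $\overline{\RB}_n(\C_\iota)\times\overline{\RB}_n(\C_\iota)$, from parameters $(\varrho^{0_n},\varrho^{0_n'})$ to $(\varrho^\varsigma,\varrho^\upsilon)$, is a direct reading of the $\iota$-weights carried by $\phi_{\xi,\chi}^\iota$ inside $F_\mu\otimes F_\nu\otimes F_{\eta,\chi}$ through their realization as algebraic functions. Preservation of the Schwartz property on $z.\GL_n(\K)$ is automatic since $\phi_{\xi,\chi}^\iota\equiv 1$ on this orbit.

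Finally, for $\Psi \in \CI_{\xi_0,\chi'}^\sharp$ one computes
\[
\Lambda_{\chi'}(\phi_{\xi,\chi}^\iota\cdot\Psi;\mathrm{d}g) = \int_{\GL_n(\K)} \phi_{\xi,\chi}^\iota(z^\iota.g^\iota)\,\Psi(z.g)\,\chi'(\det g)\,\od\!g = \Lambda_{\chi'}(\Psi;\mathrm{d}g),
\]
and applying Corollary~\ref{cor:LSS} to each side yields
\[
\Gamma_{\psi_\K}(\varrho^\varsigma,\varrho^\upsilon,\chi')\,\oZ_{\chi'}(\phi_{\xi,\chi}^\iota\cdot\Psi;\mathrm{d}g) = \Gamma_{\psi_\K}(\varrho^{0_n},\varrho^{0_n'},\chi')\,\oZ_{\chi'}(\Psi;\mathrm{d}g),
\]
which is precisely the claimed commutativity with factor $C_1(\chi')$. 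The principal subtlety is in pinning down the normalization $\phi_{\xi,\chi}^\iota(z^\iota)=1$ compatibly with $\phi_{\xi,\chi}^{\bar\iota}(\check z^{\bar\iota})=1$; once that convention is fixed, the argument is essentially formal and uses only the open-orbit formula of Section~\ref{sec:lss} together with the diagonal invariance in Proposition~\ref{prop:phixichi}.
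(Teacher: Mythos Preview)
Your proposal is correct and follows essentially the same route as the paper: reduce via Corollary~\ref{cor:LSS} to the open-orbit integral $\Lambda_{\chi'}$, use that $\phi_{\xi,\chi}^\iota$ is diagonally $\GL_n(\C_\iota)$-invariant and normalized to be $1$ at $z^\iota$ (hence identically $1$ on the open orbit), and read off the ratio of $\Gamma_{\psi_\K}$-factors. The only point you flag that the paper leaves implicit is the separate normalization $\phi_{\xi,\chi}^\iota(z^\iota)=1$, $\phi_{\xi,\chi}^{\bar\iota}(\check z^{\bar\iota})=1$; this is indeed the convention in force (consistent with $\phi_{\xi,\chi}(\tilde z)=1$ in Case~($\pm$)), and once fixed the argument is identical.
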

	
	\begin{proof}
		It is clear that $\phi_{\xi, \chi}^\iota\cdot  \CI_{\xi_0, \chi'}^\sharp  \subset \CI_{\varsigma, \upsilon, \chi'}^\sharp$. In view of Corollary \ref{cor:LSS}, it suffices to show that the diagram 
		\[
		\begin{CD}
			\CI_{\xi_0, \chi'}^\sharp 
			@> \Lambda_{\chi'} >> \mathfrak{M}^*_{n,\K} \\
			@ V  \phi_{\xi, \chi}^\iota\cdot  V V          @  |  \\
			\CI_{\varsigma, \upsilon, \chi'}^\sharp @> \Lambda_{\chi'} >>  \mathfrak{M}^*_{n,\K}\\
		\end{CD}
		\]
		commutes. By  Proposition \ref{prop:phixichi},  for $\Psi\in \CI_{\xi_0, \chi'}^\sharp$ and $\mathrm{d}g\in \frak{M}_{n,\K}$ we have that
		\[
		\begin{aligned}
			&  \Lambda_{\chi'}(\phi_{\xi, \chi}^\iota \cdot \Psi;\mathrm{d}g) \\ 
			= \ &  \int_{\GL_{n}(\K)} \phi_{\xi, \chi}^\iota(z.g) \Psi(zg)\chi'(\det g)  \mathrm{d}g \\
			= \ &  \int_{\GL_{n}(\K)}  \Psi(z.g)\chi'(\det g)  \mathrm{d}g\\
			= \ & \Lambda_{\chi'}(\Psi;\mathrm{d}g).
		\end{aligned}
		\]
		This proves the lemma.
	\end{proof}
	
	\begin{lemp} \label{lem:CD2} The diagram 
		\[
		\begin{CD}
			\CI_{\varsigma, \upsilon, \chi'} 
			@> \oZ_{\chi'} >> \mathfrak{M}^*_{n,\K} \\
			@ V  \CF_{\psi_\K}  V V          @  V V C_2(\chi')  V  \\
			\check\CI_{\varsigma, \upsilon, \chi'} @> \wt\oZ_{\chi'} >>  \mathfrak{M}^*_{n,\K}
		\end{CD}
		\]
		commutes, where
		\[
		C_2(\chi') = \omega_{\varrho^\varsigma}(-1) \omega_{\varrho^\upsilon}(-1)^n \cdot \gamma(0, \varrho^\varsigma\times \varrho^\upsilon\times\chi', \psi_\K).
		\]
	\end{lemp}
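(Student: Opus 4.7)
The lemma is essentially a direct reformulation of the functional equation \eqref{FE} already established in the preceding subsection, specialized to $\varrho = \varrho^\varsigma$ and $\varrho' = \varrho^\upsilon$. The plan is to verify the commutativity by chasing a pure tensor around the square, with no substantial new input beyond \eqref{FE} itself.

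Concretely, starting from an element $f\otimes f'\otimes \Phi \in \CI_{\varsigma,\upsilon,\chi'}$ with $f\in I(\varrho^\varsigma)$, $f'\in I(\varrho^\upsilon)$, $\Phi\in \CS(\K^{1\times n})$, I would go down first: by Lemma \ref{lem:FS}, the map $\CF_{\psi_\K}$ acts only on the third factor and produces $f\otimes f'\otimes \wh{\Phi}\in \check{\CI}_{\varsigma,\upsilon,\chi'}$. Applying $\wt\oZ_{\chi'}$ to this element and invoking the functional equation \eqref{FE} with $\varrho = \varrho^\varsigma$, $\varrho' = \varrho^\upsilon$ yields
\[
\wt\oZ_{\chi'}(f, f', \wh{\Phi}; \mathrm{d}g) = \omega_{\varrho^\varsigma}(-1)\,\omega_{\varrho^\upsilon}(-1)^n \cdot \gamma(0, \varrho^\varsigma\times \varrho^\upsilon\times \chi', \psi_\K) \cdot \oZ_{\chi'}(f, f', \Phi; \mathrm{d}g),
\]
which is exactly $C_2(\chi')\cdot \oZ_{\chi'}(f,f',\Phi;\mathrm{d}g)$, the image of $f\otimes f'\otimes \Phi$ along the right-then-down path.

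Since pure tensors span a dense subspace of $\CI_{\varsigma,\upsilon,\chi'}$ and all the operators in question are continuous, the diagram commutes whenever $\oZ_{\chi'}$ converges absolutely, i.e.\ when $\mathrm{Re}(\chi')$ is sufficiently large. The commutativity then extends, as an identity of meromorphic families in $\chi'\in \widehat{\K^\times}$, by analytic continuation, using the meromorphic continuation of both $\oZ_{\chi'}$ and $\wt\oZ_{\chi'}$ and the meromorphy of $C_2(\chi')$ as guaranteed by the standard properties of the local gamma factor.

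There is essentially no obstacle here: the work has already been done in formulating \eqref{FE} (and its reformulation \eqref{wtZ} via $\overline{\psi_\K}$) together with Lemma \ref{lem:whit}. The only point requiring minor attention is to observe that the $\CF_{\psi_\K}$ appearing in the diagram acts on the Schwartz factor in the same way that produces the Fourier-transformed datum $\wh\Phi$ in \eqref{FE}, so that the signs and gamma factor in $C_2(\chi')$ match those produced by \eqref{FE} verbatim.
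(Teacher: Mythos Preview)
Your proposal is correct and takes essentially the same approach as the paper: the paper's proof consists of the single sentence ``This is the functional equation \eqref{FE},'' and your argument is simply a careful unpacking of that invocation. The additional remarks you make about density of pure tensors and meromorphic continuation are routine and implicit in the paper's treatment.
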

	
	\begin{proof}
		This is the functional equation \eqref{FE}.
	\end{proof}
	
	Put
	\[
	\check\CI_{\varsigma, \upsilon, \chi'}^\sharp:=\Set{ \Psi\in \check\CI_{\varsigma, \upsilon, \chi'} |  \Psi\vert_{\check z. \GL_{n}(\K)}\in
	\mathcal{S}( \check z. \GL_{n}(\K)) },
	\]
	and define $\check\CI_{\xi, \chi'\chi}^\sharp$ in the similar way. 
	
	\begin{lemp} \label{lem:CD3}
		The diagram 
		\[
		\begin{CD}
			\check \CI_{\varsigma, \upsilon, \chi'}^\sharp
			@> \wt\oZ_{\chi'} >> \mathfrak{M}^*_{n,\K} \\
			@ V  \phi_{\xi, \chi}^{\bar\iota}\cdot V V          @  V V C_3(\chi')  V  \\
			\check\CI_{\xi, \chi'\chi}^\sharp @> \wt\oZ_{\chi'\chi} >>  \mathfrak{M}^*_{n,\K}
		\end{CD}
		\]
		commutes, where
		\[
		C_3(\chi') =\frac{\Gamma_{\overline{\psi_\K}}( \varrho^{\wh\varsigma}, \varrho^{\wh\upsilon} ,\chi'^{-1}|\cdot|_{\K}) }{\Gamma_{\overline{\psi_\K}}( \varrho^{\wh\mu}, \varrho^{\wh\nu},\chi'^{-1}\chi^{-1}|\cdot|_{\K}) }.
		\]
	\end{lemp}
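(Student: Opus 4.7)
The plan follows the strategy of Lemma \ref{lem:CD1}, now transported to the Fourier-dual side where the open orbit is represented by $\check z$ and the relevant additive character is $\overline{\psi_\K}$. First I would establish the $\overline{\psi_\K}$-analog of Theorem \ref{thm:LSS} and Corollary \ref{cor:LSS}: for $\Psi \in \check\CI_{\varsigma,\upsilon,\chi'}^\sharp$, define the open orbit integral
\[
\check\Lambda_{\chi'}(\Psi;\mathrm{d}g):=\int_{\GL_n(\K)}\Psi(\check z.g)\,\chi'(\det g)\,\mathrm{d}g
\]
and prove that
\[
\check\Lambda_{\chi'}(\Psi;\mathrm{d}g)=\Gamma_{\overline{\psi_\K}}(\varrho^{\wh\varsigma},\varrho^{\wh\upsilon},\chi'^{-1}|\cdot|_{\K})\cdot\wt\oZ_{\chi'}(\Psi;\mathrm{d}g),
\]
with the analogous identity on $\check\CI_{\xi,\chi'\chi}^\sharp$ yielding the factor $\Gamma_{\overline{\psi_\K}}(\varrho^{\wh\mu},\varrho^{\wh\nu},\chi'^{-1}\chi^{-1}|\cdot|_{\K})$. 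This can be done either by repeating the proofs of Theorem \ref{thm:LSS} and Corollary \ref{cor:LSS} with $\overline{\psi_\K}$ replacing $\psi_\K$ (the argument is symmetric, and $\check z$ represents the unique open orbit of the diagonal right $\GL_n(\K)$-action on $\CB_n(\K)\times\CB_n(\K)\times\K^{n\times 1}$), or by transporting Corollary \ref{cor:LSS} through the isomorphism \eqref{wtZ} together with the functional equation relating $\gamma(s,\omega,\psi_\K)$ and $\gamma(s,\omega,\overline{\psi_\K})$.

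Next I would exploit the normalization $\phi_{\xi,\chi}(\tilde z)=1$ from Proposition \ref{prop:phixichi}. In Case $(\pm)$ the factorization $\phi_{\xi,\chi}=\phi_{\xi,\chi}^\iota\otimes\phi_{\xi,\chi}^{\bar\iota}$ together with $\tilde z=(z^\iota,\check z^{\bar\iota})$ allows one to normalize the splitting so that $\phi_{\xi,\chi}^\iota(z)=1$ and $\phi_{\xi,\chi}^{\bar\iota}(\check z)=1$ simultaneously, consistent with the choice already used in the proof of Lemma \ref{lem:CD1}. Since $\phi_{\xi,\chi}^{\bar\iota}$ is $\GL_n(\C_{\bar\iota})$-invariant under the diagonal right action, this yields $\phi_{\xi,\chi}^{\bar\iota}(\check z.g)=1$ for every $g\in\GL_n(\K)$, and therefore
\[
\check\Lambda_{\chi'\chi}(\phi_{\xi,\chi}^{\bar\iota}\cdot\Psi;\mathrm{d}g)=\check\Lambda_{\chi'}(\Psi;\mathrm{d}g),
\]
where the promotion of the Weil-representation twist from $\chi'$ to $\chi'\chi$ is absorbed into the character $\chi'\chi(\det g)$ under the integral, matching the shift of the central character carried by $\phi_{\xi,\chi}^{\bar\iota}$ and encoded in the target space $\check\CI_{\xi,\chi'\chi}^\sharp$.

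Combining the invariance of $\check\Lambda$ with the two $\Gamma_{\overline{\psi_\K}}$-identities from the first step yields
\[
\wt\oZ_{\chi'\chi}(\phi_{\xi,\chi}^{\bar\iota}\cdot\Psi;\mathrm{d}g)=\frac{\Gamma_{\overline{\psi_\K}}(\varrho^{\wh\varsigma},\varrho^{\wh\upsilon},\chi'^{-1}|\cdot|_{\K})}{\Gamma_{\overline{\psi_\K}}(\varrho^{\wh\mu},\varrho^{\wh\nu},\chi'^{-1}\chi^{-1}|\cdot|_{\K})}\cdot\wt\oZ_{\chi'}(\Psi;\mathrm{d}g),
\]
which is precisely the claimed identity with constant $C_3(\chi')$. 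The main obstacle I anticipate is the bookkeeping in the first step: one must verify that the tilde and transpose operations correctly send the $(z,\psi_\K)$-setup to the $(\check z,\overline{\psi_\K})$-setup, and in particular that the sign factor $\sgn(\wh\varsigma,\wh\upsilon,\chi'^{-1}|\cdot|_{\K})$ in the definition \eqref{LSS:Gamma} of $\Gamma_{\overline{\psi_\K}}$ comes out correctly, together with the shift of parameters $\varrho\mapsto\wh\varrho$ and $\chi'\mapsto\chi'^{-1}|\cdot|_{\K}$. Once this transfer is in place, the remainder of the argument is a direct transcription of the proof of Lemma \ref{lem:CD1}.
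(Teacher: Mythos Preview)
Your proposal is correct and follows the same approach as the paper. The paper carries out precisely your second option for step one: it transports everything via the tilde map $\Psi\mapsto\wt\Psi$ (with $\wt\Psi(g,h,x):=\Psi(w_ng^\tau,w_nh^\tau,{}^tx)$) to reduce to the existing Corollary~\ref{cor:LSS} on the $\CI_{\wh\varsigma,\wh\upsilon,\chi'^{-1}|\cdot|_\K}^\sharp$ side with $\overline{\psi_\K}$, rather than defining a separate $\check\Lambda$; the key orbit identity $\wt\Psi(z.g)=\Psi(\check z.g^\tau)$ makes the $\sharp$-conditions correspond, and the normalization becomes $\wt\phi_{\xi,\chi}^{\bar\iota}(z.g)=\phi_{\xi,\chi}^{\bar\iota}(\check z.g^\tau)=1$, exactly as you anticipate.
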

	
	\begin{proof}
		For  $\Psi \in C^\infty(\GL_n(\K)\times \GL_n(\K)\times \K^{n\times 1})$, define $\wt\Psi \in C^\infty(\GL_n(\K)\times\GL_n(\K)\times \K^{1\times n})$ by
		\[
		\wt\Psi(g, h, x):= \Psi(w_ng^\tau, w_n h^\tau, {}^t x).
		\] 
		If $\Psi\in  \check\CI_{\varsigma, \upsilon, \chi'}$, then $\wt\Psi\in \CI_{\wh\varsigma, \wh\upsilon, \chi'^{-1}|\cdot|_{\K}}$  and by \eqref{wtZ} we have that  
		\[
		\wt\oZ_{\chi'}( \Psi;\mathrm{d}g) = \oZ_{\chi'^{-1}|\cdot|_{\K}}(\wt\Psi;\mathrm{d}g).
		\]
		Moreover, it is easy to verify that $\wt\Psi(z.g) = \Psi(\check z.g^\tau)$, $g\in \GL_n(\K)$,  hence $\wt\Psi\in \CI_{\wh\varsigma, \wh\upsilon, \chi'^{-1}|\cdot|_{\K}}^\sharp$ if and only if $\Psi\in \check\CI_{\varsigma, \upsilon, \chi'}^\sharp$.  
		Similar discussion applies for $\check\CI_{\xi, \chi'\chi}$ and $\CI_{\wh\xi, \chi'^{-1}\chi^{-1}|\cdot|_{\K}}:=\CI_{\wh\mu, \wh\nu, \chi'^{-1}\chi^{-1}|\cdot|_{\K}}$. Thus we obtain  a commutative diagram 
		\[
		\begin{CD}
			\check \CI_{\varsigma, \upsilon, \chi'}^\sharp
			@>   >>  \CI_{\wh\varsigma, \wh\upsilon, \chi'^{-1}|\cdot|_{\K}}^\sharp   \\
			@ V \phi_{\xi, \chi}^{\bar\iota}\cdot  V V          @  VV \wt\phi_{\xi, \chi}^{\bar\iota}\cdot  V  \\
			\check\CI_{\xi, \chi'\chi}^\sharp @>   >>  \CI_{\wh\xi, \chi'^{-1}\chi^{-1}|\cdot|_{\K}}^\sharp
		\end{CD}
		\]
		where the horizontal arrows are given by $\Psi\mapsto \wt\Psi$. 
		
		Recall  that  $\oZ_{\chi'^{-1}|\cdot|_{\K}}(\wt\Psi;\mathrm{d}g)$ is defined following \eqref{Zvarrho} with $\psi_\K$ replaced by $\overline{\psi_\K}$. In view of Corollary \ref{cor:LSS} and the above discussions, it suffices to show that the diagram
		\[
		\begin{CD}
			\CI_{\wh\varsigma, \wh\upsilon, \chi'^{-1}|\cdot|_{\K}}^\sharp 
			@> \Lambda_{\chi'^{-1}|\cdot|_{\K}} >> \mathfrak{M}^*_{n,\K} \\
			@ V  \wt\phi_{\xi, \chi}^{\bar\iota}\cdot  V V          @  |  \\
			\CI_{\wh\xi, \chi'^{-1}\chi^{-1}|\cdot|_{\K}}^\sharp @> \Lambda_{\chi'^{-1}\chi^{-1}|\cdot|_{\K}} >>  \mathfrak{M}^*_{n,\K}
		\end{CD}
		\]
		commutes. This is similar to the proof of Lemma \ref{lem:CD1}, noting that 
		\[
		\wt\phi_{\xi, \chi}^{\bar\iota}( z.g) = \phi_{\xi, \chi}^{\bar\iota}(\check z.g^\tau) = 1
		\]
		by Proposition \ref{prop:phixichi}. This proves the lemma.
	\end{proof}
	
	\begin{lemp} \label{lem:CD4}
		The diagram 
		\[
		\begin{CD}
			\check \CI_{\xi, \chi'\chi}
			@> \wt\oZ_{\chi'\chi} >> \mathfrak{M}^*_{n,\K} \\
			@ V  \CF_{\overline{\psi_\K}} V V          @  V V C_4(\chi')  V  \\
			\CI_{\xi, \chi'\chi} @> \oZ_{\chi'\chi}>>  \mathfrak{M}^*_{n,\K}
		\end{CD}
		\]
		commutes, where
		\[
		C_4(\chi') =\frac{1}{\omega_{\varrho^\mu}(-1) \omega_{\varrho^\nu}(-1)^n\cdot \gamma(0, \varrho^\mu \times \varrho^\nu\times\chi'\chi,\psi_\K)}.
		\]
	\end{lemp}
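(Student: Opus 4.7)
The plan is to derive Lemma \ref{lem:CD4} as a direct reformulation of the functional equation \eqref{FE}, applied to the principal series parameters $(\varrho^\mu,\varrho^\nu,\chi'\chi)$. Conceptually this is the inverse of Lemma \ref{lem:CD2}: both lemmas encode the same functional equation, and since $\CF_{\overline{\psi_\K}}$ inverts $\CF_{\psi_\K}$ (thanks to the fact that the Haar measure on $\K$ was fixed to be self-dual with respect to $\psi_\K$), the constant $C_4(\chi')$ is exactly the reciprocal of the scalar that appears on the right hand side of \eqref{FE} with parameters $(\varrho^\mu,\varrho^\nu,\chi'\chi)$.

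By linearity and continuity, it suffices to verify the identity on pure tensors $\Psi = f\otimes f'\otimes \wh\Phi$ with $f\in I(\varrho^\mu)$, $f'\in I(\varrho^\nu)$, and $\wh\Phi\in \check\omega_n$. I would set $\Phi:=\CF_{\overline{\psi_\K}}(\wh\Phi)\in\omega_n$, so that Fourier inversion yields $\CF_{\psi_\K}(\Phi)=\wh\Phi$. The functional equation \eqref{FE}, applied to $(\varrho^\mu,\varrho^\nu,\chi'\chi)$, then reads
\[
\wt{\oZ}_{\chi'\chi}(f,f',\wh\Phi;\mathrm{d}g) \,=\, \omega_{\varrho^\mu}(-1)\,\omega_{\varrho^\nu}(-1)^n\,\gamma(0,\varrho^\mu\times\varrho^\nu\times\chi'\chi,\psi_\K)\cdot \oZ_{\chi'\chi}(f,f',\Phi;\mathrm{d}g).
\]
Dividing both sides by the scalar factor and rewriting $\Phi=\CF_{\overline{\psi_\K}}(\wh\Phi)$ produces
\[
\oZ_{\chi'\chi}\bigl(f,f',\CF_{\overline{\psi_\K}}(\wh\Phi);\mathrm{d}g\bigr) \,=\, C_4(\chi')\cdot \wt{\oZ}_{\chi'\chi}(f,f',\wh\Phi;\mathrm{d}g),
\]
which is exactly the commutativity claimed by the diagram.

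No substantial obstacle is expected, as the argument is a one-line rearrangement of the functional equation already recalled in \eqref{FE}. The only subtlety is that the identity must be read as an equality of meromorphic functions of $\chi'\in\widehat{\K^\times}$: the normalization constant $C_4(\chi')$, as well as both $\oZ_{\chi'\chi}$ and $\wt{\oZ}_{\chi'\chi}$, are meromorphic in $\chi'$, and since \eqref{FE} holds as a meromorphic identity on the whole character group (established on the region of absolute convergence and then extended by the uniqueness of meromorphic continuation), the resulting formula holds in the same sense.
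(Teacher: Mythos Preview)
Your proposal is correct and takes essentially the same approach as the paper, which simply cites the functional equation \eqref{FE}. Your elaboration of how Fourier inversion and division by the scalar yield $C_4(\chi')$ is exactly the intended unpacking of that one-line proof.
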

	
	\begin{proof}
		This is again the functional equation \eqref{FE}.
	\end{proof}
	
	\begin{corp}
		Let the notations be as above. Then
		\begin{equation} \label{const}
			\begin{aligned}
				\Omega_{\xi, \chi}' & =   C_{\eta,\chi}^{-1}\cdot\left.  
				\frac{\oL(0, \pi_{0_{n}}\times \pi_{0_{n}}\times\chi')}{\oL(0, \pi_\mu\times \pi_\nu\times\chi)} \cdot \prod^4_{i=1}C_i(\chi') \right\vert_{\chi'=\chi_0} \\
				& =C_{\eta,\chi}^{-1}\cdot\frac{\oL(0, \pi_{0_{n}}\times \pi_{0_{n}}\times\chi')}{\oL(0, \pi_\mu\times \pi_\nu\times\chi)} \cdot \frac{\Gamma_{\psi_\K}( \varrho^{0_n}, \varrho^{0_n} ,\chi') }{\Gamma_{\psi_\K}(\varrho^\varsigma, \varrho^\upsilon,\chi) }\cdot \frac{\Gamma_{\overline{\psi_\K}}( \varrho^{\wh\varsigma}, \varrho^{\wh\upsilon},\chi'^{-1}|\cdot|_{\K} ) }{\Gamma_{\overline{\psi_\K}}( \varrho^{\wh\mu}, \varrho^{\wh\nu},\chi'^{-1}\chi^{-1}|\cdot|_{\K}) } \\
				&\quad  \cdot \left. \frac{\omega_{\varrho^\varsigma}(-1) \omega_{\varrho^\upsilon}(-1)^n}{\omega_{\varrho^\mu}(-1)\omega_{\varrho^\nu}(-1)^n}  \cdot \frac{\gamma(0, \varrho^\varsigma\times \varrho^\upsilon\times\chi', \psi_\K)}{\gamma(0, \varrho^\mu\times \varrho^\nu\times\chi'\chi, \psi_\K)}  \right\vert_{\chi'=\chi_0}.
			\end{aligned}
		\end{equation}
	\end{corp}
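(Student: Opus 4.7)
The idea is to chase the cube \eqref{cube} through the four-step factorization \eqref{delta-fac} of $C_{\eta,\chi}\cdot\delta_{\xi,\chi}$. By the construction of $\oZ^\diamond_{\xi,\chi'}$ as $\oZ^\Theta_{\xi,\chi'}\circ p_\xi$ together with the identifications $I_\mu=I(\varrho^\mu)$, $\wt\lambda_\mu=\lambda_{\varrho^\mu}$ (and similarly for $\nu$), the normalized integral $\oZ^\diamond_{\xi,\chi'}$ coincides with $\oL(0,\pi_\mu\times\pi_\nu\times\chi')^{-1}$ times the unnormalized integral $\oZ_{\chi'}$ of \eqref{Zvarrho} attached to $(\varrho^\mu,\varrho^\nu)$; the analogous statement holds for the source of $\delta_{\xi,\chi}$ with the pair $(\varrho^{0_n},\varrho^{0_n})$. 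Consequently the defining identity $\oZ^\diamond_{\xi,\chi}\circ\delta_{\xi,\chi}=\Omega_{\xi,\chi}'\cdot\oZ^\diamond_{\xi_0,\chi_0}$ from \eqref{cube} translates into an identity between the corresponding unnormalized Rankin-Selberg functionals, up to the L-factor ratio $\oL(0,\pi_{0_n}\times\pi_{0_n}\times\chi')/\oL(0,\pi_\mu\times\pi_\nu\times\chi)$ evaluated at $\chi'=\mathbf{1}$.

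Having reduced to the unnormalized setting, I would apply Lemmas \ref{lem:CD1}--\ref{lem:CD4} in succession along \eqref{delta-fac}. Each of the four maps $\phi^\iota_{\xi,\chi}\cdot$, $\CF_{\psi_\K}$, $\phi^{\bar\iota}_{\xi,\chi}\cdot$, $\CF_{\overline{\psi_\K}}$ intertwines the corresponding Rankin-Selberg functional up to the scalar $C_i(\chi')$; concatenating the four commutative diagrams yields
\[
\oZ_{\chi'\chi}\circ\bigl(C_{\eta,\chi}\cdot\delta_{\xi,\chi}\bigr)=\prod_{i=1}^4 C_i(\chi')\cdot\oZ_{\chi'}.
\]
Dividing by $C_{\eta,\chi}$, incorporating the L-factor ratio identified in the previous paragraph, and specializing to $\chi'=\mathbf{1}$ yield the first equality of \eqref{const}; substituting the explicit expressions for $C_1,C_2,C_3,C_4$ from the four lemmas produces the second.

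The main technical subtlety to address is that Lemmas \ref{lem:CD1} and \ref{lem:CD3} are stated only on the Schwartz-type subspaces $\CI^\sharp_{\xi_0,\chi'}$ and $\check\CI^\sharp_{\varsigma,\upsilon,\chi'}$, rather than on the full induced representations. To handle this I would observe that since $\phi^\iota_{\xi,\chi}$ and $\phi^{\bar\iota}_{\xi,\chi}$ take value $1$ on the open orbits $z.\GL_n(\K)$ and $\check z.\GL_n(\K)$ by Proposition \ref{prop:phixichi}, multiplication by these functions preserves the $\sharp$-subspaces, and the Fourier transforms $\CF_{\psi_\K}$, $\CF_{\overline{\psi_\K}}$ restrict to continuous isomorphisms between the corresponding Schwartz spaces. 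The chain of scalar identities is therefore valid on a sufficiently rich family of test vectors. Since the space of $\GL_n(\K)$-equivariant Rankin-Selberg functionals in question is one-dimensional by \cite[Theorem C]{SZ}, this determines the scalar uniquely on all of $\CI_{\xi_0,\chi'}$; combined with meromorphic continuation in $\chi'$ in the style of the proof of Corollary \ref{cor:LSS}, this justifies the specialization at $\chi'=\mathbf{1}$ and completes the verification of \eqref{const}.
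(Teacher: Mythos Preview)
Your proposal is correct and follows the same approach as the paper, whose own proof is a single sentence (``This follows from \eqref{cube} and Lemmas \ref{lem:CD1}--\ref{lem:CD4}''). You have simply unpacked that sentence: reduce from $\oZ^\diamond$ to the unnormalized $\oZ$ via the L-factor ratio, then concatenate the four commutative squares along the factorization \eqref{delta-fac}, and finally substitute the explicit constants $C_i(\chi')$.

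One remark on your handling of the $\sharp$-subspace issue. The assertion that $\CF_{\psi_\K}$ and $\CF_{\overline{\psi_\K}}$ carry the $\sharp$-subspaces to one another is not obviously true and is not what is needed. The point is rather that Lemmas \ref{lem:CD2} and \ref{lem:CD4} (the functional-equation steps) hold on the \emph{full} induced representations, so the $\sharp$ restriction enters only in Lemmas \ref{lem:CD1} and \ref{lem:CD3}. The genuine question is whether the image of $\CI_{\xi_0,\chi'}^\sharp$ under steps 1 and 2 meets $\check\CI_{\varsigma,\upsilon,\chi'}^\sharp$ in a set large enough to pin down the scalar. The paper does not spell this out either; your fallback to multiplicity one (\cite[Theorem C]{SZ}) together with meromorphic continuation in the parameters, in the spirit of the proof of Corollary \ref{cor:LSS}, is the correct way to make this step rigorous, since the \emph{composite} $\delta_{\xi,\chi}$ is $\GL_n(\K)$-equivariant even though the individual multiplications by $\phi_{\xi,\chi}^\iota$ and $\phi_{\xi,\chi}^{\bar\iota}$ are not.
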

	
	\begin{proof}
		This follows from \eqref{cube} and Lemmas \ref{lem:CD1}--\ref{lem:CD4}.  
	\end{proof}

	\subsection{Case ($\pm$)}
	
	We are going to prove Theorem \ref{thmap2} for Case ($\pm$). It remains to evaluate  $\Omega_{\xi, \chi}'$ using \eqref{const}.
	
	We first make some preparations. Recall the archimedean L-factors given in Section \ref{Lfactor} and that (see \cite{K}) for  $a, b\in \BC$ with $a-b\in \BZ$, 
	\[
	\begin{aligned}
		\varepsilon(s, \iota^a \bar{\iota}^b, \psi_\K) &= (\varepsilon_{\psi_{\R}}\cdot\mathrm{i})^{\abs{a-b}}. 
	\end{aligned}
	\]
	Note that
	\[
	\gamma(s, \iota^a\bar\iota^b, \psi_\K) =  (\varepsilon_{\psi_{\R}}\cdot\mathrm{i})^{\abs{a-b}} \cdot \frac{\Gamma_\BC(1-s-\min\{a,b\})}{\Gamma_\BC(s+\max\{a,b\})}.
	\]
	From this,  we deduce the following useful lemma, which is also an easy consequence of the functional equation for Tate integrals. 
	
	\begin{lemp} \label{lem:gamma}
		For $\omega\in\wh{\K^\times}$, it holds that 
		\[
		\gamma(s, \omega, \psi_\K)\cdot \gamma(1-s, \omega^{-1}, \overline{\psi_\K}) =1.
		\]
	\end{lemp}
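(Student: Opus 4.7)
The plan is to derive the identity from the local Tate functional equation together with Fourier inversion. Since both sides are meromorphic functions of $s$, it suffices to verify the equality on a Zariski dense subset of $\BC$.

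First, fix a Schwartz function $\phi\in \CS(\K)$ for which the local Tate zeta integral
\[
Z(s,\omega,\phi):= \int_{\K^\times} \phi(x)\,\omega(x)\,|x|_\K^s\,\mathrm{d}^\times x
\]
is not identically zero as a function of $s$; such a $\phi$ exists since $\oL(s,\omega)$ itself can be realized as such an integral. Denote by $\wh\phi=\CF_{\psi_\K}(\phi)$ the Fourier transform of $\phi$ with respect to $\psi_\K$, and by $\CF_{\overline{\psi_\K}}$ the Fourier transform with respect to $\overline{\psi_\K}$, each normalized with the self-dual Haar measure on $\K$ (note that the self-dual measure with respect to $\psi_\K$ coincides with the one with respect to $\overline{\psi_\K}$).

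Next, apply the local Tate functional equation to the pair $(\phi,\omega,\psi_\K)$ to obtain
\[
Z(1-s,\omega^{-1},\wh\phi) = \gamma(s,\omega,\psi_\K)\cdot Z(s,\omega,\phi),
\]
and apply it once more, this time to $(\wh\phi,\omega^{-1},\overline{\psi_\K})$ with the parameter $1-s$ in place of $s$, to obtain
\[
Z(s,\omega,\CF_{\overline{\psi_\K}}(\wh\phi)) = \gamma(1-s,\omega^{-1},\overline{\psi_\K}) \cdot Z(1-s,\omega^{-1},\wh\phi).
\]
Combining these two identities yields
\[
Z(s,\omega,\CF_{\overline{\psi_\K}}(\wh\phi)) = \gamma(s,\omega,\psi_\K)\cdot\gamma(1-s,\omega^{-1},\overline{\psi_\K})\cdot Z(s,\omega,\phi).
\]

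The final step is Fourier inversion: with the sign conventions fixed in \eqref{fourier}, the composition $\CF_{\overline{\psi_\K}}\circ\CF_{\psi_\K}$ equals the identity on $\CS(\K)$, so $\CF_{\overline{\psi_\K}}(\wh\phi)=\phi$. Dividing both sides of the displayed identity by $Z(s,\omega,\phi)$, which is nonzero on a dense open subset of $\BC$, then yields the claimed equality of gamma factors. There is no substantive obstacle; the only point requiring care is that the sign convention in \eqref{fourier} makes the composition $\CF_{\overline{\psi_\K}}\circ\CF_{\psi_\K}$ precisely the identity rather than the reflection $x\mapsto -x$, so that no extra factor of $\omega(-1)$ enters the computation.
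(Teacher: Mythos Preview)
Your proof is correct and follows precisely the second of the two approaches the paper indicates: the paper states the lemma is ``an easy consequence of the functional equation for Tate integrals,'' which is exactly the route you take, applying the Tate functional equation twice and invoking Fourier inversion $\CF_{\overline{\psi_\K}}\circ\CF_{\psi_\K}=\mathrm{id}$. The paper's primary suggestion is instead to read the identity off directly from the explicit formula $\gamma(s,\iota^a\bar\iota^b,\psi_\K)=(\varepsilon_{\psi_\R}\cdot\mathrm{i})^{|a-b|}\cdot\Gamma_\BC(1-s-\min\{a,b\})/\Gamma_\BC(s+\max\{a,b\})$ (and its real analogue), but your argument has the advantage of being uniform in $\K$ and conceptually transparent.
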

	
	For $i=1, 2,\ldots, n$, write 
	\[
	\check i := n+1-i,
	\]
	so that for $\varrho=(\varrho_1,\ldots, \varrho_n)\in (\wh{\K^\times})^n$ we have that 
	\[
	\wh\varrho = (\varrho^{-1}_{\check 1}, \ldots, \varrho^{-1}_{\check n}).
	\]
	With this notation, we recall from Section \ref{sec:TP} that
	\[
	\pi_\mu \cong I(\iota^{\tilde\mu^\iota_1}\bar\iota^{\tilde\mu^{\bar\iota}_{\check 1}}, \ldots, \iota^{\tilde\mu^\iota_n}\bar\iota^{\tilde\mu^{\bar\iota}_{\check n}}).
	\]
	The following result is an immediate consequence of the balanced condition Lemma \ref{lem-balanced}.
	
	\begin{lemp} \label{lem:Rag}
		Assume that $\chi \in \RB(\xi)$. Then the following hold.
		\begin{itemize}
			\item[(a)]
			$
			\tilde{\mu}^\iota_i + \tilde{\nu}^\iota_k+\chi_{\iota} - \tilde{\mu}^{\bar{\iota}}_{\check i}  - \tilde{\nu}^{\bar{\iota}}_{\check k}-\chi_{\overline{\iota}}
			$
			is positive if $i+k\leq n$, and is negative otherwise.
			
			\item[(b)]
			$\tilde{\mu}^\iota_i + \tilde{\nu}^\iota_{k}+\chi_{\iota} \leq 0< \tilde{\mu}^{\bar\iota}_i + \tilde{\nu}^{\bar\iota}_k+\chi_{\overline{\iota}}$ if $i+k=n+1$. 
		\end{itemize} 
	\end{lemp}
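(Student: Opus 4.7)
The plan is to unfold the definition $\tilde\mu^{\iota'}_i=\mu^{\iota'}_i+\frac{n+1-2i}{2}$ (and analogously for $\tilde\nu$), after which both statements reduce to elementary comparisons of integer combinations controlled by the balanced conditions in Lemma \ref{lem-balanced} and the dominance of the weights $\mu,\nu$. A direct computation will yield the key identity
\[
\tilde{\mu}^\iota_i + \tilde{\nu}^\iota_k+\chi_{\iota} - \tilde{\mu}^{\bar{\iota}}_{\check i}  - \tilde{\nu}^{\bar{\iota}}_{\check k}-\chi_{\overline{\iota}} = \bigl(\mu^\iota_i+\nu^\iota_k+\chi_\iota\bigr) - \bigl(\mu^{\bar\iota}_{n+1-i}+\nu^{\bar\iota}_{n+1-k}+\chi_{\bar\iota}\bigr) + 2(n+1-i-k).
\]
Part (b) will then be read off immediately: when $i+k=n+1$ the half-integer shifts in both $\tilde\mu^\iota_i+\tilde\nu^\iota_k$ and $\tilde\mu^{\bar\iota}_i+\tilde\nu^{\bar\iota}_k$ cancel, and the assertion becomes exactly $\mu^\iota_i+\nu^\iota_{n+1-i}+\chi_\iota\leq 0<\mu^{\bar\iota}_i+\nu^{\bar\iota}_{n+1-i}+\chi_{\bar\iota}$, which is precisely what the Case $(\pm)$ balanced conditions provide (the second inequality with $\geq 1$).

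For part (a) I would split into the three ranges $i+k\leq n$, $i+k=n+1$, and $i+k\geq n+2$. In the middle range the shift $2(n+1-i-k)$ is zero and combining $\mu^\iota_i+\nu^\iota_{n+1-i}+\chi_\iota\leq 0$ with $\mu^{\bar\iota}_{n+1-i}+\nu^{\bar\iota}_i+\chi_{\bar\iota}\geq 1$ (applying the Case $(\pm)$ bound on the $\bar\iota$ side with $j=n+1-i$) gives a value $\leq -1<0$. In the range $i+k\leq n$ the shift is $\geq 2$; dominance of $\nu$ allows replacing $\nu^\iota_k$ by the smaller $\nu^\iota_{n-i}$ (since $k\leq n-i$) and $\nu^{\bar\iota}_{n+1-k}$ by the larger $\nu^{\bar\iota}_{i+1}$ (since $n+1-k\geq i+1$), at which point the balanced inequalities give the first parenthesis $\geq 0$ and the second $\leq 1$, producing a total $\geq 1>0$. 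The range $i+k\geq n+2$ is handled symmetrically: dominance of $\nu$ now implies $\nu^\iota_k\leq \nu^\iota_{n+1-i}$ and $\nu^{\bar\iota}_{n+1-k}\geq \nu^{\bar\iota}_i$, so the balanced conditions force the first parenthesis $\leq 0$ and the second $\geq 1$, while the shift is $\leq -2$, yielding a total $\leq -3<0$.

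I do not anticipate any substantive obstacle; the entire argument is a short bookkeeping exercise, consistent with the phrase \emph{immediate consequence} preceding the statement. The only point requiring care is matching the correct instance of Lemma \ref{lem-balanced} to each of the three sub-cases of (a), which is dictated by dominance of $\mu,\nu$ as indicated above.
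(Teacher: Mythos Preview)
Your proposal is correct and is exactly the explicit verification the paper has in mind when it says the lemma is ``an immediate consequence of the balanced condition Lemma~\ref{lem-balanced}.'' Your key identity and the three-range case split (using dominance of $\nu$ to reduce to the precise instances of the Case~$(\pm)$ inequalities) are the intended bookkeeping; note also that the paper has essentially already recorded part~(a) as \eqref{lem3.11} in the proof of Lemma~\ref{lem-bc}.
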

	
	The rest of this subsection is devoted to the proof of  Proposition \ref{prop:Omega} below, which thereby finishes the proof of Theorem \ref{thmap2}.
	
	\begin{prp} \label{prop:Omega}
		The constant $\Omega_{\xi, \chi}'$ is nonzero, with inverse equal to the constant $c'_{\xi,\chi}\cdot\varepsilon'_{\xi,\chi}$ given in Theorem \ref{mainthm}.
	\end{prp}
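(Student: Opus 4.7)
The proof is a direct evaluation of the explicit formula \eqref{const} at $\chi' = \mathbf{1}$, verifying that the result equals $(c_{\xi,\chi}\cdot \varepsilon'_{\xi,\chi})^{-1}$. The strategy has four main steps, organized around assembling complete $\gamma$-factor products out of the partial products that appear in $\Gamma_{\psi_\K}$ and $\Gamma_{\overline{\psi_\K}}$.

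First, I would combine the two $\Gamma$-ratios. The key observation is that, writing $\check{i}=n+1-i$, one has $\hat\varsigma_i^{\iota'}=-\varsigma_{\check i}^{\iota'}$ and $\hat\upsilon_k^{\iota'}=-\upsilon_{\check k}^{\iota'}$, so $\varrho^{\hat\varsigma}_i\varrho^{\hat\upsilon}_k\chi'^{-1}|\cdot|_\K = (\varrho^\varsigma_{\check i}\varrho^\upsilon_{\check k}\chi')^{-1}|\cdot|_\K$. Lemma \ref{lem:gamma} then gives $\gamma(0, \varrho^{\hat\varsigma}_i\varrho^{\hat\upsilon}_k\chi'^{-1}|\cdot|_\K,\overline{\psi_\K}) = \gamma(0, \varrho^\varsigma_{\check i}\varrho^\upsilon_{\check k}\chi', \psi_\K)^{-1}$. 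Re-indexing $i\mapsto \check i$, $k\mapsto \check k$ converts the condition $i+k\leq n$ into $i'+k' \geq n+2$, so the $\Gamma_{\overline{\psi_\K}}$ numerator of \eqref{const} becomes an inverse product of $\gamma(0,\cdot,\psi_\K)$'s over pairs with $i+k\geq n+2$, which is complementary to the $\Gamma_{\psi_\K}$ denominator (over $i+k\leq n$) except for the anti-diagonal $i+k = n+1$. The same mechanism applies to the $(\mu,\nu)$ pair.

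Second, combining Step 1 with the explicit $\gamma$-factor ratio $\gamma(0,\varrho^\varsigma\times \varrho^\upsilon\times \chi',\psi_\K)/\gamma(0,\varrho^\mu\times\varrho^\nu\times\chi'\chi,\psi_\K)$ already present in \eqref{const}, all the gamma-factor contributions assemble into a ratio of complete products $\gamma(0,\varrho^\varsigma\times\varrho^\upsilon,\psi_\K)/\gamma(0,\varrho^\mu\times\varrho^\nu\times\chi,\psi_\K)$, multiplied by an anti-diagonal correction from the $i+k=n+1$ terms of the $\Gamma$-ratios. Using the expansion $\gamma(s,\omega,\psi_\K) = \varepsilon(s,\omega,\psi_\K)\cdot L(1-s,\omega^{-1})/L(s,\omega)$, the $L$-factor part is exactly $L(0,\pi_\mu\times\pi_\nu\times\chi)/L(0,\pi_{0_n}\times\pi_{0_n}\times\chi')|_{\chi'=\mathbf{1}}$, which cancels the ratio in front of \eqref{const}. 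The normalization constant $C_{\eta,\chi}^{-1} = (-\varepsilon_{\psi_\R}\mathrm{i})^{n-\eta_{\bar\iota}-n\chi_{\bar\iota}}$ will absorb the scalar coming from the anti-diagonal shift by $|\cdot|_\K$.

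Third, what remains is a product of $\varepsilon$-factors and of central character signs (from $\omega_{\varrho^\varsigma}, \omega_{\varrho^\upsilon}, \omega_{\varrho^\mu}, \omega_{\varrho^\nu}$ and the $\mathrm{sgn}$-factors in $\Gamma_{\psi_\K}$ and $\Gamma_{\overline{\psi_\K}}$). For the archimedean $\varepsilon$-factor of $\iota^a\bar\iota^b$, one uses $\varepsilon(s,\iota^a\bar\iota^b,\psi_\K) = (\varepsilon_{\psi_\R}\mathrm{i})^{|a-b|}$; the absolute values are resolved by Lemma \ref{lem:Rag}, which dictates precisely the sign of $\widetilde\mu_i^\iota+\widetilde\nu_k^\iota+\chi_\iota - \widetilde\mu_{\check i}^{\bar\iota}-\widetilde\nu_{\check k}^{\bar\iota}-\chi_{\bar\iota}$ according to whether $i+k\leq n$ or $i+k>n$. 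Collecting exponents, the product of $(\varepsilon_{\psi_\R}\mathrm{i})$-powers over $i+k\leq n$ reproduces $c_{\xi,\chi}^{-1}$, and the sign factors recombine to give $(\varepsilon'_{\xi,\chi})^{-1}$.

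The main obstacle is the careful sign bookkeeping in the last step. Two subtleties deserve extra care: the anti-diagonal contribution $i+k=n+1$, where Lemma \ref{lem:Rag}(b) forces an asymmetry between the two sides of the inequality and produces the factor $\prod_{i=1}^n (-1)^{\chi_\iota+\mu_i^{\bar\iota}+n\nu_i^{\bar\iota}}$ in $\varepsilon'_{\xi,\chi}$; and the asymmetric $\chi$-twist in $\varsigma$ (versus the untwisted $\upsilon$), which alters the central character signs $\omega_{\varrho^\varsigma}(-1)$ compared to the "diagonal" case treated in \cite{LLS24}. Once these signs are tracked, the resulting exponent patterns match the definitions of $c_{\xi,\chi}$ and $\varepsilon'_{\xi,\chi}$ verbatim, completing the proof of Proposition \ref{prop:Omega}.
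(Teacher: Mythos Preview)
Your overall strategy matches the paper's: use Lemma~\ref{lem:gamma} to rewrite the $\Gamma_{\overline{\psi_\K}}$-factors as inverse $\gamma_{\psi_\K}$-products over the complementary index range, combine with the full $\gamma$-ratio, split into $\varepsilon$- and $L$-parts, resolve absolute values via Lemma~\ref{lem:Rag}, and collect signs. However, two of your intermediate claims are not correct as stated, and the second one hides a genuinely missing ingredient.

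\textbf{Step 1.} You write that ``the same mechanism applies to the $(\mu,\nu)$ pair,'' but in \eqref{const} the $\Gamma_{\psi_\K}$-numerator involves $(\varrho^{0_n},\varrho^{0_n},\chi')$, not $(\varrho^\mu,\varrho^\nu,\chi'\chi)$. There is no $\Gamma_{\psi_\K}(\varrho^\mu,\varrho^\nu,\cdot)$ to pair with $\Gamma_{\overline{\psi_\K}}(\varrho^{\wh\mu},\varrho^{\wh\nu},\cdot)$. The paper instead pairs $\gamma_{\overline{\psi_\K}}(\varrho^{\wh\mu},\varrho^{\wh\nu},\cdot)$ with the full product $\gamma(0,\varrho^\mu\times\varrho^\nu\times\chi'\chi,\psi_\K)$, obtaining $\prod_{i+k\le n+1}\gamma(0,\varrho^\mu_i\varrho^\nu_k\chi'\chi,\psi_\K)$, while $\gamma_{\psi_\K}(0,\varrho^{0_n},\varrho^{0_n},\chi')$ remains as a separate factor. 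The resulting organization is \eqref{omega'-1}, which is \emph{not} a ratio of complete $n\times n$ products.

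\textbf{Step 2.} The claim that ``the $L$-factor part is exactly $\oL(0,\pi_\mu\times\pi_\nu\times\chi)/\oL(0,\pi_{0_n}\times\pi_{0_n}\times\chi')$, which cancels the ratio in front of \eqref{const}'' is false. After the reorganization in \eqref{omega'-1}, the $L$-contributions from the partial $\gamma$-products combine with the $L$-function ratio via \eqref{C1} and \eqref{C1'}, and the ratio is evaluated using the reflection identity
\[
\Gamma_\BC(s+\ell)\cdot \Gamma_\BC(1-s-\ell)=(-1)^\ell\,\Gamma_\BC(s)\cdot\Gamma_\BC(1-s),\qquad \ell\in\BZ,
\]
which yields the nontrivial sign $\prod_{i+k\le n}(-1)^{\min_{\iota'}\{\mu_i^{\iota'}+\nu_k^{\iota'}+\chi_{\iota'}\}}$ (equation \eqref{LC}), \emph{not} $1$. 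This sign is essential: it combines with the $\varepsilon$-factor exponents (each of the form $(\varepsilon_{\psi_\R}\mathrm{i})^{|a-b|}=(\varepsilon_{\psi_\R}\mathrm{i})^{\max-\min}$) to convert them into $(\varepsilon_{\psi_\R}\mathrm{i})^{\max+\min}=(\varepsilon_{\psi_\R}\mathrm{i})^{\sum_{\iota'}(\mu_i^{\iota'}+\nu_k^{\iota'}+\chi_{\iota'})}$, which is exactly what appears in $c_{\xi,\chi}$. Without this reflection identity your plan has no mechanism to eliminate the ``min'' from the $\varepsilon$-factor exponents, and the final expression would not match $c_{\xi,\chi}\cdot\varepsilon'_{\xi,\chi}$.
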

	
	\begin{proof}
		We first  collect the contributions to $\Omega_{\xi, \chi}'^{-1}$ from various kinds of factors. By \eqref{LSS:gamma} and Lemma \ref{lem:gamma}, we find that 
		\[
		\gamma_{\overline{\psi_\K}}(1, \varrho^{\wh\mu}, \varrho^{\wh\nu},\chi'\chi)\cdot \gamma(0, \varrho^\mu\times \varrho^\nu\times\chi'\chi, \psi_\K) = \prod_{i+k\leq n+1}\gamma(0, \varrho^\mu_i \cdot\varrho^\nu_k\cdot\chi'\chi, \psi_\K)
		\]
		and that
		\[
		\frac{\gamma_{\overline{\psi_\K}}(1, \varrho^{\wh\varsigma}, \varrho^{\wh\upsilon},\chi' )}{\gamma_{\psi_\K}(0, \varrho^\varsigma, \varrho^\upsilon,\chi')}   \cdot \gamma(0, \varrho^\varsigma\times \varrho^\upsilon\times\chi', \psi_\K)
		= \prod_{i+k=n+1}\gamma(0, \varrho^\varsigma_i \cdot\varrho^\upsilon_k\cdot\chi', \psi_\K).
		\]
		Unfolding \eqref{LSS:Gamma}, it follows that 
		\begin{equation} \label{omega'-1}
			\begin{aligned}
				\Omega_{\xi, \chi}'^{-1} = \varepsilon\cdot C_{\eta,\chi} &\cdot  \frac{ \prod_{i+k\leq n+1}\gamma(0, \varrho^\mu_i \varrho^\nu_k\chi'\chi, \psi_\K)}{\left(\prod_{i+k\leq n}\gamma(0, \varrho^{0_n}_i \varrho^{0_n}_k\chi', \psi_\K)\right)\cdot \left(\prod_{i+k=n+1}\gamma(0, \varrho^\varsigma_i \varrho^\upsilon_k\chi', \psi_\K)\right)}\\
				&\cdot\left.\frac{\oL(0, \pi_\mu\times \pi_\nu\times\chi'\chi)}{\oL(0, \pi_{0_n}\times \pi_{0_n}\times\chi')}\right\vert_{\chi'=\chi_0},
			\end{aligned}
		\end{equation}
		where $\varepsilon=\pm1$ is given by
		\be \label{sign}
		\varepsilon :=\frac{\sgn(\varrho^{0_n}, \varrho^{0_n},\chi_0)\sgn(\varrho^{\wh\varsigma}, \varrho^{\wh\upsilon},\chi_0)}{\sgn(\varrho^\varsigma, \varrho^\upsilon,\chi_0)\sgn(\varrho^{\wh\mu},\varrho^{\wh\nu},\chi_0\chi)}
		\cdot \frac{\omega_{\varrho^\varsigma}(-1) \omega_{\varrho^\upsilon}(-1)^n}{\omega_{\varrho^\mu}(-1)\omega_{\varrho^\nu}(-1)^n}.
		\ee
		We evaluate the different kinds of  factors in \eqref{omega'-1}  as follows. 
		
		Using Lemma \ref{lem:Rag} (1), we find that
		\[
		\begin{aligned}
			\oL(0, \pi_\mu\times \pi_\nu\times\chi'\chi) =& \prod_{i+k\leq n, \,\iota'\in \CE_\K} \Gamma_\BC(\tilde{\mu}^{\iota'}_i +\tilde{\nu}^{\iota'}_k+\chi'_{\iota'}+\chi_{\iota'})\\
		&\cdot \prod_{i+k=n+1}\Gamma_\BC(\tilde{\mu}_i^{\bar{\iota}}+\tilde{\nu}_k^{\bar{\iota}}+\chi'_{\overline{\iota}}+\chi_{\overline{\iota}}).
		\end{aligned}
		\]
		We have that
		\[
		\frac{\oL(1, (\varrho^\mu_i \varrho^\nu_k\chi'\chi)^{-1})}{\oL(0, \varrho^\mu_i\varrho^\nu_k\chi'\chi)} = \frac{\Gamma_\BC(1-\min_{\iota' \in\CE_\K}\{\tilde{\mu}^{\iota'}_i + \tilde{\nu}^{\iota'}_k+\chi'_{\iota'}+\chi_{\iota'}\})}{\Gamma_\BC(\max_{\iota'\in \CE_\K}\{\tilde{\mu}^{\iota'}_i + \tilde{\nu}^{\iota'}_k+\chi'_{\iota'}+\chi_{\iota'}\})}.
		\]
		From this and Lemma \ref{lem:Rag} (2),  
		\be \label{C1}
		\begin{aligned}
			& \left( \prod_{i+k\leq n+1} \frac{\oL(1, (\varrho^\mu_i \varrho^\nu_k\chi'\chi)^{-1})}{\oL(0, \varrho^\mu_i\varrho^\nu_k\chi'\chi)} \right) \cdot  \oL(0, \pi_\mu\times \pi_\nu\times\chi'\chi)\\
			= \ & \prod_{i+k\leq n}\left( \Gamma_\BC(\min_{\iota'\in\CE_\K}\{\tilde{\mu}^{\iota'}_i + \tilde{\nu}^{\iota'}_k+\chi'_{\iota'}+\chi_{\iota'}\})
			\cdot\Gamma_\BC(1-\min_{\iota'\in\CE_\K}\{\tilde{\mu}^{\iota'}_i + \tilde{\nu}^{\iota'}_k+\chi'_{\iota'}+\chi_{\iota'}\})\right) \\
			&  \cdot \prod_{i+k=n+1}\Gamma_\BC(1- \tilde{\mu}_i^{\iota} - \tilde{\nu}_k^{\iota}-\chi'_{\iota}-\chi_{\iota}).
		\end{aligned}
		\ee
		In a similar manner, we find that
		\begin{equation} \label{C1'}
			\begin{aligned}
				& \left( \prod_{i+k\leq n} \frac{\oL(1, (\varrho^{0_n}_i \varrho^{0_n}_k\chi')^{-1})}{\oL(0, \varrho^{0_n}_i\varrho^{0_n}_k\chi')} \right) \cdot 
				\left(\prod_{i+k=n+1}\frac{\oL(1,  (\varrho^\varsigma_i \varrho^\upsilon_k\chi')^{-1})}{\oL(0, \varrho^\varsigma_i \varrho^\upsilon_k\chi')}\right)\\
                &\cdot \oL(0, \pi_{0_n}\times \pi_{0_n}\times\chi')\\
				= \, & \prod_{i+k\leq n}\left( \Gamma_\BC(n+1-i-k+\min_{\iota'\in\mathcal{E}_{\K}}\{\chi'_{\iota'}\})
				\cdot\Gamma_\BC(-n+i+k+\min_{\iota'\in\mathcal{E}_{\K}}\{\chi'_{\iota'}\})\right) \\
				&  \cdot \prod_{i+k=n+1}\Gamma_\BC(1- \tilde{\mu}_i^{\iota} - \tilde{\nu}_k^{\iota}-\chi'_{\iota}-\chi_{\iota}).
			\end{aligned}
		\end{equation}
		
		It is easy to verify the following identity
		\be \label{gamma-id}
		\Gamma_\BC(s+ \ell)\cdot\Gamma_\BC(1-s-\ell) = (-1)^\ell \cdot \Gamma_\BC(s)\cdot \Gamma_\BC(1-s),\quad \ell \in \BZ.
		\ee
		Then the ratio of \eqref{C1}  and \eqref{C1'} equals 
		\be \label{LC}
		\prod_{i+k\leq n}(-1)^{\min_{\iota'\in\CE_\K} \{ \mu^{\iota'}_i + \nu^{\iota'}_k+\chi_{\iota'}\}} = \prod_{i+k\leq n}{\rm i}^{2\min_{\iota'\in\CE_\K} \{ \mu^{\iota'}_i + \nu^{\iota'}_k+\chi_{\iota'}\}}.
		\ee
		Specialized to $\chi'=\chi_0$, we have the local epsilon factor
		\[
		\varepsilon(0, \varrho^\mu_i \varrho^\nu_k\chi, \psi_\K) =  (\varepsilon_{\psi_{\R}}\cdot\mathrm{i})^{\max_{\iota'\in\CE_\K}\{\mu^{\iota'}_i + \nu^{\iota'}_k+\chi_{\iota'} \} - \min_{\iota'\in \CE_\K} \{ \mu^{ \iota'}_i + \nu^{ \iota'}_k+\chi_{\overline{\iota}}\} }.
		\]
In particular, $\varepsilon(0,\varrho_i^{0_n}\varrho_k^{0_n}\chi_0,\psi_{\K})=\varepsilon_{\psi_{\R}}\cdot\mathrm{i}$.  By Lemma \ref{lem:Rag} (2) again, for $i+k=n+1$ it holds that 
		\[
		\begin{aligned}
			\varepsilon(0, \varrho^\mu_i \varrho^\nu_k\chi, \psi_\K)  &= (\varepsilon_{\psi_{\R}}\cdot\mathrm{i})^{\mu^{\bar\iota}_i + \nu^{\bar\iota}_k +\chi_{\overline{\iota}}- \mu^\iota_i - \nu^\iota_k-\chi_{\iota}},\\
			\varepsilon(0, \varrho^\varsigma_i \varrho^\upsilon_k\chi_0, \psi_\K)  &=  (\varepsilon_{\psi_{\R}}\cdot\mathrm{i})^{1-\chi_{\iota} - \mu^\iota_i - \nu^\iota_k}.
		\end{aligned}
		\]
		
		In view of  \eqref{LC} and the above results  for local epsilon factors, it can be verified that the total contribution of all the local gamma factors and L-factors in \eqref{omega'-1} equals 
		\be \label{gamma-L}
		\prod_{i=1}^n(\varepsilon_{\psi_{\R}}\cdot\mathrm{i})^{\mu_i^{\overline{\iota}}+\nu_i^{\overline{\iota}}+\chi_{\overline{\iota}}-1}\cdot  \prod_{i+k\leq n}  (\varepsilon_{\psi_{\R}}\cdot\mathrm{i})^{\mu^{\iota}_i+\mu_i^{\overline{\iota}}+\nu^{\iota}_k+\nu_k^{\overline{\iota}}+\chi_{\iota}+\chi_{\overline{\iota}}-1}.
		\ee
		
		It is straightforward to  evaluate \eqref{sign} and find that
		\be  \label{sign-value}
		\varepsilon  = \prod_{i=1}^n(-1)^{\mu_i^{\overline{\iota}}+n(\nu_i^{\overline{\iota}}-1)+\chi_{\overline{\iota}}}
		\cdot\prod_{i>k,\, i+k\leq n}(-1)^{\mu_i^{\iota}+\nu_k^{\iota}+\mu_{n+1-i}^{\overline{\iota}}+\nu_{n+1-k}^{\overline{\iota}}+\chi_{\iota}+\chi_{\overline{\iota}}-1}.
		\ee
		
		Being the product of $C_{\eta,\chi}$, \eqref{gamma-L} and \eqref{sign-value}, we have
		\[
\Omega_{\xi,\chi}'^{-1}=c'_{\xi,\chi}\cdot\varepsilon'_{\xi,\chi}
		\]
		which finishes the proof of the proposition. 
	\end{proof}

	\subsection{Case ($-$)}
	
	We now prove Theorem \ref{thmap2} for Case ($-$). Twisted by quadratic characters if necessary, we may assume that $\varepsilon_{\pi_{\mu}}=\varepsilon_{\pi_{\nu}}=\mathbf{1}_{\K^{\times}}$ is trivial. We apply the discussion in Section \ref{sec:lss} to principal series representations $I_{\mu}=I(\varrho^{\mu})$, $I_{\nu}=I(\varrho^{\nu})$, where $\varrho^{\mu}=(\varrho_1^{\mu},\dots,\varrho_{n}^{\mu})\in(\widehat{\K^{\times}})^n$ and $\varrho^{\nu}=(\varrho_1^{\nu},\dots,\varrho_{n}^{\nu})\in(\widehat{\K^{\times}})^n$ are the corresponding characters as in Section \ref{sec2.1.1}.
	
	As in the proof of Lemma \ref{lem:CD1} and \ref{lem:CD3}, we have the following commutative diagram:
	\[
	\begin{CD}
		\CI_{\xi_0,\chi'\chi_0}^{\sharp} @>\Lambda_{\chi'\chi_0}>> \mathfrak{M}_{n,\K}^{\ast}\\
		@V\phi_{\xi,\chi}\cdot VV @|\\
		\CI_{\xi,\chi'\chi}^{\sharp} @>\Lambda_{\chi'\chi}>>\mathfrak{M}_{n,\K}^{\ast},
	\end{CD}
	\]
    where the left vertical arrow is the multiplication by $\phi_{\xi,\chi}$.
	Then
	\begin{equation} \label{const'}
		\Omega_{\xi,\chi}'=C_{\eta,\chi}^{-1}\cdot\left.\frac{\RL(0,\pi_{0_n}\times\pi_{0_n}\times\chi'\chi_0)}{\RL(0,\pi_{\mu}\times\pi_{\nu}\times\chi'\chi)}\cdot\frac{\Gamma_{\psi_{\K}}(\varrho^{0_n},\varrho^{0_n},\chi'\chi_0)}{\Gamma_{\psi_\K}(\varrho^{\mu}, \varrho^{\nu},\chi'\chi)}\right|_{\chi'=\mathbf{1}}.
	\end{equation}
	It remains to evaluate  $\Omega_{\xi, \chi}'$ using \eqref{const'} and show that its inverse is equal to the constant $c_{\xi,\chi}\cdot\varepsilon_{\xi,\chi}\cdot g_{\xi,\chi}(0)$ given in Theorem \ref{mainthm}.
	
	We need to calculate
	\[
		\Omega_{\xi,\chi}'^{-1}=C_{\eta,\chi}\cdot\left.\frac{\mathrm{sgn}(\varrho^{\mu},\varrho^{\nu},\chi)}{\mathrm{sgn}(\varrho^{0_n},\varrho^{0_n},\chi_0)}\cdot\frac{\gamma_{\psi_{\K}}(0,\varrho^{\mu},\varrho^{\nu},\chi'\chi)}{\gamma_{\psi_{\K}}(0,\varrho^{0_n},\varrho^{0_n},\chi'\chi_0)}\cdot\frac{\RL(0,\pi_{\mu}\times\pi_{\nu}\times\chi'\chi)}{\RL(0,\pi_{0_n}\times\pi_{0_n}\times\chi'\chi_0)}\right|_{\chi'=\mathbf{1}}.
	\]
	One easily computes that
	\[
	\frac{\mathrm{sgn}(\varrho^{\mu},\varrho^{\nu},\chi)}{\mathrm{sgn}(\varrho^{0_n},\varrho^{0_n},\chi_0)}=\prod_{i>k,\,i+k\leq n}(-1)^{\sum_{\iota'\in\mathcal{E}_{\K}}(\mu_i^{\iota'}+\nu_k^{\iota'}+\chi_{\iota'})}=\varepsilon_{\xi,\chi}.
	\]
	
	Since $\chi\in\RB(\xi)$ is balanced, as a consequence of the balanced condition Lemma \ref{lem-balanced}, we have that
	\[
	\widetilde{\mu}_i^{\iota}+\widetilde{\nu}_k^{\iota}+\chi_{\iota}-\widetilde{\mu}^{\overline{\iota}}_{\check{i}}-\widetilde{\nu}^{\overline{\iota}}_{\check k}-\chi_{\overline{\iota}}
	\]
	is positive if $i+k\leq n$, and is negative if $i+k\geq n+2$.

	\subsubsection{\emph{$\K\cong\C$}}
	
	 We find that
	\[
	\begin{aligned}
		\oL(0, \pi_\mu\times \pi_\nu\times\chi'\chi) =& \prod_{i+k\leq n, \,\iota'\in \CE_\K} \Gamma_\BC(\tilde{\mu}^{\iota'}_i +\tilde{\nu}^{\iota'}_k+\chi'_{\iota'}+\chi_{\iota'})\\
		& \cdot \prod_{i=1}^n\Gamma_\BC\left(\max_{\iota'\in\mathcal{E}_{\K}}\{\tilde{\mu}_i^{\iota'}+\tilde{\nu}_{n+1-i}^{\iota'}+\chi'_{\iota'}+\chi_{\iota'}\}\right).
	\end{aligned}
	\]
	We have that
	\[
	\frac{\oL(1, (\varrho^\mu_i \varrho^\nu_k\chi'\chi)^{-1})}{\oL(0, \varrho^\mu_i\varrho^\nu_k\chi'\chi)} = \frac{\Gamma_\BC(1-\min_{\iota' \in\CE_\K}\{\tilde{\mu}^{\iota'}_i + \tilde{\nu}^{\iota'}_k+\chi'_{\iota'}+\chi_{\iota'}\})}{\Gamma_\BC(\max_{\iota'\in \CE_\K}\{\tilde{\mu}^{\iota'}_i + \tilde{\nu}^{\iota'}_k+\chi'_{\iota'}+\chi_{\iota'}\})}.
	\]
	Hence
	\[
	\begin{aligned}
		&\left(\prod_{i+k\leq n}\frac{\RL(1,(\varrho_i^{\mu}\varrho_k^{\nu}\chi'\chi)^{-1})}{\RL(0,\varrho_i^{\mu}\varrho_k^{\nu},\chi'\chi)}\right)\cdot\RL(0,\pi_{\mu}\times\pi_{\nu}\times\chi'\chi)\\
		=&\prod_{i+k\leq n}\left(\Gamma_{\C}(\min_{\iota'\in\mathcal{E}_{\K}}\{\widetilde{\mu}_i^{\iota'}+\widetilde{\nu}_k^{\iota'}+\chi'_{\iota'}+\chi_{\iota'}\})\Gamma_{\C}(1-\min_{\iota'\in\mathcal{E}_{\K}}\{\widetilde{\mu}_i^{\iota'}+\widetilde{\nu}_k^{\iota'}+\chi'_{\iota'}+\chi_{\iota'}\})\right)\\
		& \cdot  \prod_{i=1}^n\Gamma_\BC\left(\max_{\iota'\in\mathcal{E}_{\K}}\{\tilde{\mu}_i^{\iota'}+\tilde{\nu}_{n+1-i}^{\iota'}+\chi'_{\iota'}+\chi_{\iota'}\}\right).
	\end{aligned}
	\]	
	
	Specialized to $\chi'=\mathbf{1}$, using the identity \eqref{gamma-id}, we calculate that
	\[
	\begin{aligned}
		&\left.\frac{\gamma_{\psi_{\K}}(0,\varrho^{\mu},\varrho^{\nu},\chi'\chi)}{\gamma_{\psi_{\K}}(0,\varrho^{0_n},\varrho^{0_n},\chi'\chi_0)}\cdot\frac{\RL(0,\pi_{\mu}\times\pi_{\nu}\times\chi'\chi)}{\RL(0,\pi_{0_n}\times\pi_{0_n}\times\chi'\chi_0)}\right|_{\chi'=\mathbf{1}}\\
        =&\prod_{i+k\leq n,\,\iota'\in\mathcal{E}_{\K}}(\varepsilon_{\psi_\R}\cdot\mathrm{i})^{\mu_i^{\iota'}+\nu_k^{\iota'}+\chi_{\iota'}}\cdot\prod_{i=1}^n\left.\frac{\Gamma_\BC\left(s+\max_{\iota'\in\mathcal{E}_{\K}}\{\mu_i^{\iota'}+\nu_{n+1-i}^{\iota'}+\chi_{\iota'}\}\right)}{\Gamma_{\C}(s)}\right|_{s=0},
	\end{aligned}
	\]
	which completes the proof for $\K\cong\C$.

	\subsubsection{\emph{$\K\cong\R$}}
	
	When $n$ is even, we have
	\[
	\begin{aligned}
		& \RL(0,\pi_{\mu}\times\pi_{\nu}\times\chi'\chi) \\
         = &  \prod_{i+k\leq n} \Gamma_\BC(\tilde{\mu}^{\iota}_i +\tilde{\nu}^{\iota}_k+\chi'_{\iota}+\chi_{\iota})\\
		&\cdot \prod_{i=1}^{\frac{n}{2}}\Gamma_\BC\left(\max\{\widetilde{\mu}_i^{\iota}+\widetilde{\nu}_{n+1-i}^{\iota}+\chi_{\iota}'+\chi_{\iota},\widetilde{\mu}_{n+1-i}^{\iota}+\widetilde{\nu}_{i}^{\iota}+\chi_{\iota}'+\chi_{\iota}\}\right).
	\end{aligned}
	\]
For a character $\omega\in\widehat{\K^{\times}}$, we write $\delta(\omega)\in\{0,1\}$ such that $\omega(-1)=(-1)^{\delta(\omega)}$. Then
	\[
	\varepsilon(s,\omega,\psi_{\K})=(\varepsilon_{\psi_{\R}}\cdot\mathrm{i})^{\delta(\omega)}.
	\]
	We have that
	\[
	\frac{\oL(1, (\varrho^\mu_i \varrho^\nu_k\chi'\chi)^{-1})}{\oL(0, \varrho^\mu_i\varrho^\nu_k\chi'\chi)}=\frac{\Gamma_{\R}\left(1-\widetilde{\mu}_i^{\iota}-\widetilde{\nu}_k^{\iota}-\chi_{\iota}'-\chi_{\iota}+\delta(\varrho_i^{\mu}\varrho_k^{\nu}\chi'\chi)\right)}{\Gamma_{\R}\left(\widetilde{\mu}_i^{\iota}+\widetilde{\nu}_k^{\iota}+\chi_{\iota}'+\chi_{\iota}+\delta(\varrho_i^{\mu}\varrho_k^{\nu}\chi'\chi)\right)}.
	\]
	Hence
	\[
	\begin{aligned}
		&\left(\prod_{i+k\leq n}\frac{\RL(1,(\varrho_i^{\mu}\varrho_k^{\nu}\chi'\chi)^{-1})}{\RL(0,\varrho_i^{\mu}\varrho_k^{\nu},\chi'\chi)}\right)\cdot\RL(0,\pi_{\mu}\times\pi_{\nu}\times\chi'\chi)\\
		=&\prod_{i+k\leq n}\left(\Gamma_{\R}(\widetilde{\mu}_i^{\iota}+\widetilde{\nu}_k^{\iota}+\chi_{\iota}'+\chi_{\iota}+1-\delta(\varrho_i^{\mu}\varrho_k^{\nu}\chi'\chi))\right.\\
        &\qquad\quad\left.\cdot\Gamma_{\R}(1-\widetilde{\mu}_i^{\iota}-\widetilde{\nu}_k^{\iota}-\chi_{\iota}'-\chi_{\iota}+\delta(\varrho_i^{\mu}\varrho_k^{\nu}\chi'\chi))\right)\\
		& \cdot  \prod_{i=1}^{\frac{n}{2}}\Gamma_\BC\left(\max\{\widetilde{\mu}_i^{\iota}+\widetilde{\nu}_{n+1-i}^{\iota}+\chi_{\iota}'+\chi_{\iota},\widetilde{\mu}_{n+1-i}^{\iota}+\widetilde{\nu}_{i}^{\iota}+\chi_{\iota}'+\chi_{\iota}\}\right).
	\end{aligned}
	\]
	Specialized to $\chi'=\mathbf{1}$, using the identity
	\[
	\Gamma_{\R}(s+l)\cdot\Gamma_{\R}(2-s-l)=\mathrm{i}^l\cdot\Gamma_{\R}(s)\cdot\Gamma_{\R}(2-s),\qquad l\in2\Z,
	\]
	we calculate that
	\[
	\begin{aligned}
		&\left.\frac{\gamma_{\psi_{\K}}(0,\varrho^{\mu},\varrho^{\nu},\chi'\chi)}{\gamma_{\psi_{\K}}(0,\varrho^{0_n},\varrho^{0_n},\chi'\chi_0)}\cdot\frac{\RL(0,\pi_{\mu}\times\pi_{\nu}\times\chi'\chi)}{\RL(0,\pi_{0_n}\times\pi_{0_n}\times\chi'\chi_0)}\right|_{\chi'=\mathbf{1}}\\
		=&\prod_{i+k\leq n}(\varepsilon_{\psi_\R}\cdot\mathrm{i})^{\mu_i^{\iota}+\nu_k^{\iota}+\chi_{\iota}}\cdot\prod_{i=1}^{\frac{n}{2}}\left.\frac{\Gamma_\BC\left(s+\max\{\mu_i^{\iota}+\nu_{n+1-i}^{\iota}+\chi_{\iota},\mu_{n+1-i}^{\iota}+\nu_{i}^{\iota}+\chi_{\iota}\}\right)}{\Gamma_{\C}(s)}\right|_{s=0},
	\end{aligned}
	\]
    as desired.
	
	When $n$ is odd, we have
	\[
	\begin{aligned}
		& \RL(0,\pi_{\mu}\times\pi_{\nu}\times\chi'\chi)\\
         = & \prod_{i+k\leq n} \Gamma_\BC(\tilde{\mu}^{\iota}_i +\tilde{\nu}^{\iota}_k+\chi'_{\iota}+\chi_{\iota})\\
		& \cdot \prod_{i=1}^{\frac{n-1}{2}}\Gamma_\BC\left(\max\{\widetilde{\mu}_i^{\iota}+\widetilde{\nu}_{n+1-i}^{\iota}+\chi_{\iota}'+\chi_{\iota},\widetilde{\mu}_{n+1-i}^{\iota}+\widetilde{\nu}_{i}^{\iota}+\chi_{\iota}'+\chi_{\iota}\}\right)\\
		& \cdot \Gamma_{\R}\left(\widetilde{\mu}_{\frac{n+1}{2}}^{\iota}+\widetilde{\nu}_{\frac{n+1}{2}}^{\iota}+\chi'_{\iota}+\chi_{\iota}+\delta(\varrho^{\mu}_{\frac{n+1}{2}}\varrho^{\nu}_{\frac{n+1}{2}}\chi'\chi)\right).
	\end{aligned}
	\]
	The same computation as above shows that
	\[
	\begin{aligned}
		&\left.\frac{\gamma_{\psi_{\K}}(0,\varrho^{\mu},\varrho^{\nu},\chi'\chi)}{\gamma_{\psi_{\K}}(0,\varrho^{0_n},\varrho^{0_n},\chi'\chi_0)}\cdot\frac{\RL(0,\pi_{\mu}\times\pi_{\nu}\times\chi'\chi)}{\RL(0,\pi_{0_n}\times\pi_{0_n}\times\chi'\chi_0)}\right|_{\chi'=\mathbf{1}}\\
		=&\prod_{i+k\leq n}(\varepsilon_{\psi_\R}\cdot\mathrm{i})^{\mu_i^{\iota}+\nu_k^{\iota}+\chi_{\iota}}\cdot\prod_{i=1}^{\frac{n-1}{2}}\frac{\Gamma_\BC\left(s+\max\{\mu_i^{\iota}+\nu_{n+1-i}^{\iota}+\chi_{\iota},\mu_{n+1-i}^{\iota}+\nu_{i}^{\iota}+\chi_{\iota}\}\right)}{\Gamma_{\C}(s)}\\
		&\cdot \left.\frac{\Gamma_{\R}\left(s+\mu_{\frac{n+1}{2}}^{\iota}+\nu_{\frac{n+1}{2}}^{\iota}+\chi_{\iota}+\delta(\varrho_{\frac{n+1}{2}}^{\mu}\varrho^{\nu}_{\frac{n+1}{2}}\chi)\right)}{\Gamma_{\R}(s+\delta(\chi_0))}\right|_{s=0}.
	\end{aligned}
	\]
	This completes the proof for $\K\cong\R$.

	\section*{Acknowledgments}
	The authors thank Michael Harris for arising the question on non-vanishing. D. Liu was supported in part by National Key R \& D Program of China No. 2022YFA1005300 and  Zhejiang Provincial Natural Science Foundation of China under Grant No. LZ22A010006. B. Sun was supported in part by  National Key R \& D Program of China No. 2022YFA1005300, and New Cornerstone Science Foundation.
	

\end{document}